\newtheorem*{rep@theorem}{\rep@title}
\newcommand{\newreptheorem}[2]{%
\newenvironment{rep#1}[1]{%
 \def\rep@title{#2 \ref{##1}}%
 \begin{rep@theorem}}%
 {\end{rep@theorem}}}
\newtheorem{intro_thm}{Theorem}
\newtheorem{intro_cor}[intro_thm]{Corollary}
\newtheorem{lemma}{Lemma}[section]
\newtheorem{thm}[lemma]{Theorem}
\newtheorem{prop}[lemma]{Proposition}
\newtheorem{cor}[lemma]{Corollary}
\theoremstyle{definition}
\newtheorem{defn}[lemma]{Definition}
\newtheorem{rem}[lemma]{Remark}
\newtheorem{setup}[lemma]{Setup}
\theoremstyle{definition}
\newcommand\norm{\bBigg@{0.8}}
 \newcommand{\indnorm}[2][flex]{\csname #1l\endcsname\|#2%
                                 \csname #1r\endcsname\|\mathclose{}}
                                  \newcommand{\indnorml}[4][flex]{\csname #1l\endcsname\|#2%
                                 \csname #1r\endcsname\|_{#3}^{#4}\mathclose{}}
\newcommand{\sv}[2][flex]{\indnorm[#1]{#2}}
\newcommand{\isv}[2][norm]{\indnorml[#1]{#2}{\Z}{}}
\DeclareMathOperator{\res}{res}
\DeclareMathOperator{\diam}{diam}
\DeclareMathOperator{\vol}{vol}
\DeclareMathOperator{\hypvol}{hypvol}
\DeclareMathOperator{\EMD*}{EMD^*}
\DeclareMathOperator{\MD}{MD}
\DeclareMathOperator{\snap}{snap}
\DeclareMathOperator{\Lip}{Lip}
\DeclareMathOperator{\straight}{straight}
\DeclareMathOperator{\size}{size}
\DeclareMathOperator{\Aut}{Aut}
\newcommand{\pfc}[1]{%
  \widehat{#1}}
\newcommand{\fa}[1]{%
  \forall_{#1}\quad}
\newcommand{\qand}{%
  \qquad\text{and}\qquad}
\newcommand{\connsum}{\mathbin{\#}}
\newcommand{\symmdiff}{%
  \mathbin{\triangle}}
\def\ltb#1{%
  b^{(2)}_{#1}}
\def\Hyp{\mathbb{H}}
\newcommand{\N}{\ensuremath {\mathbb{N}}}
\newcommand{\R} {\ensuremath {\mathbb{R}}}
\newcommand{\Z} {\ensuremath {\mathbb{Z}}}
\newcommand{\Isom}{\ensuremath{{\rm Isom}}}
\renewcommand{\d}{\ensuremath{\,\mathrm{d}}}
\renewcommand{\rho}{\varrho}
\def\phi{\varphi}
\newcommand{\lf}{\ensuremath{\mathrm{lf}}}
\newcommand{\dvol}{\ensuremath{\mathrm{dvol}}}
\DeclareMathOperator{\map}{map}
\def\linfz#1{%
  L^\infty(#1;\Z)}
\newcommand{\ifsv}[2][norm]{\!\csname #1l\endcsname\bracevert\!#2\!%
                            \csname #1r\endcsname\bracevert\!}
\newcommand{\ifsvlf}[2][norm]{\!\csname #1l\endcsname\bracevert\!#2\!%
                            \csname #1r\endcsname\bracevert\!_{\lf}}
\newcommand{\stisv}[2][norm]{\indnorml[#1]{#2}{\Z}{\infty}}
\def\actson{\curvearrowright}
\def\longrightarrow{\rightarrow}
\def\longmapsto{\mapsto}
\def\widetilde{\tilde}
\begin{document}

\title[Stable integral simplicial volume of $3$-manifolds]
      {Stable integral simplicial volume\\ of $3$-manifolds}

\author[]{Daniel Fauser}
\address{Fakult\"{a}t f\"{u}r Mathematik, Universit\"{a}t Regensburg, 93040 Regensburg, Germany}
\email{daniel.fauser@gmx.de}

\author[]{Clara L\"{o}h}
\address{Fakult\"{a}t f\"{u}r Mathematik, Universit\"{a}t Regensburg, 93040 Regensburg, Germany}
\email{clara.loeh@ur.de}

\author[]{Marco Moraschini}
\address{Fakult\"{a}t f\"{u}r Mathematik, Universit\"{a}t Regensburg, 93040 Regensburg, Germany}
\email{marco.moraschini@ur.de}

\author[]{Jos\'e Pedro Quintanilha}
\address{Fakult\"{a}t f\"{u}r Mathematik, Universit\"{a}t Regensburg, 93040 Regensburg, Germany}
\email{jose-pedro.quintanilha@ur.de}

\thanks{}

\keywords{simplicial volume, $3$-manifolds, integral approximation, integral foliated simplicial volume}
\subjclass[2010]{55N10, 57N65, 57M27}
\date{\today.\ \copyright{\ D.~Fauser, C.~L\"oh, M.~Moraschini, J.~P.~Quintanilha 2019}.
  This work was supported by the CRC~1085 \emph{Higher Invariants}
  (Universit\"at Regensburg, funded by the DFG)}

\begin{abstract}
 We show that non-elliptic prime $3$-manifolds 
 satisfy integral approximation for the simplicial volume, i.e., that
 their simplicial volume equals the stable integral simplicial volume. 
 The proof makes use of integral foliated simplicial volume and tools
 from ergodic theory.  
\end{abstract}

\maketitle

\section{Introduction}

The \emph{simplicial volume} of an oriented compact $n$-manifold~$M$
(possibly with non-empty boundary) over a normed ring~$R$ is defined by
\begin{align*}
  \sv{M,\partial M}_R
   \coloneqq \inf \biggl\{ \sum_{j=1}^m |a_j|
   \biggm| & \; \sum_{j=1}^m a_j \cdot \sigma_j
   \in C_n(M;R) \text{ is a relative}\\[-1em]
   & \; \text{$R$-fundamental cycle of~$(M,\partial M)$}\biggr\},
\end{align*}
which is an algebraic version of (stable) complexity of manifolds. 
The classical case is~$\sv{M,\partial M} \coloneqq \sv{M,\partial M}_\R$,
introduced by Gromov~\cite{GrMo,Grom82,mapsimvol} in the context of
hyperbolic geometry and the study of topological properties of (minimal)
volume.

\subsection{The approximation problem for simplicial volume}

If $M$ admits enough finite coverings (i.e., if $\pi_1(M)$ is residually finite),
it makes sense to consider the \emph{stable integral simplicial volume}
\[ \stisv{M,\partial M}
   \coloneqq \inf \Bigl\{ \frac{\isv{W,\partial W}}{d}
           \Bigm| \text{$d \in \N$,\ $W$ a $d$-sheeted covering of~$M$}
	   \Bigr\}.
\]
In the closed case, stable integral simplicial volume gives an upper
bound for $L^2$-Betti numbers~\cite[p.~305]{Gromovbook}\cite{Sthesis},
logarithmic torsion growth of
homology~\cite[Theorem~1.6]{FLPS}\cite{sauergrowth}, and the rank
gradient~\cite{loehrg}.

As for Betti numbers, ranks of fundamental groups, or logarithmic
torsion of homology, one can ask which (typically aspherical)
manifolds~$M$ satisfy integral approximation for simplicial volume,
i.e.,~$\sv{M,\partial M} = \stisv{M,\partial M}$.

The main goal of this paper is to show that non-elliptic prime
$3$-manifolds satisfy integral approximation for simplicial volume
(Theorem~\ref{thm:main}) and that reducible $3$-manifolds in general
do not (Section~\ref{subsec:non-approxintro}), thereby answering the
approximation question in the $3$-dimensional
case~\cite[Question~1.3]{FFL}.

The following classes of manifolds were already known to satisfy
integral approximation for simplicial volume: closed surfaces of
positive genus~\cite[p.~9]{Grom82}, closed hyperbolic
$3$-manifolds~\cite[Theorem~1.7]{FLPS}, closed aspherical manifolds
with amenable residually finite fundamental
group~\cite[Theorem~1.10]{FLPS}, compact manifolds where $S^1$~acts
``non-trivially''~\cite{fauserT,fauserS1}, as well as graph manifolds
not covered by~$S^3$~\cite{FFL}.

In contrast, approximation fails uniformly for higher-dimensional
hyperbolic manifolds~\cite[Theorem~2.1]{FFM}
and it fails for closed manifolds with non-abelian free
fundamental group~\cite[Remark~3.9]{FLPS}.

\subsection{Main approximation result}

More precisely, we have the following positive result, 
which includes all closed aspherical $3$-manifolds.

\begin{intro_thm}[integral approximation for simplicial volume of non-elliptic prime $3$-manifolds]
\label{thm:main}
  Let $M$ be an oriented compact connected $3$-manifold with empty or
  toroidal boundary. If $M$~is prime and not covered by~$S^3$, then
  \[   \stisv {M,\partial M}
     = \sv {M, \partial M}
     = \frac {\hypvol(M)}{v_3}.
  \]
\end{intro_thm}

Here, $v_3$ is the volume of a (whence every) ideal regular
tetrahedron in~$\Hyp^3$, and $\hypvol(M)$~is the total volume of the
hyperbolic pieces in the JSJ decomposition of~$M$ (see
Definition~\ref{def:hypvol}).  The equality~$\sv{M,\partial M} =
\hypvol(M) / v_3$ follows from the work of Soma~\cite{Soma}, which
also holds in the non-prime case.

\subsection{Non-approximation results}\label{subsec:non-approxintro}

In the non-prime case, not all closed $3$-manifolds (with infinite
fundamental group) satisfy integral approximation for simplicial
volume; similar to previously known non-approximation results via the
first $L^2$-Betti number~\cite[Remark~3.9]{FLPS}, we obtain (see
Section~\ref{sec:nonapprox} for the proofs):

\begin{intro_thm}\label{thm-noapprox}
  Let $d \in \N_{\geq 3}$, let $m,n \in \N$, let $M_1,\dots, M_m$, $N_1,\dots, N_n$
  be oriented closed connected $d$-manifolds with the following properties:
  \begin{enumerate}
  \item We have $\sv {M_j} > 0$  for all~$j \in
    \{1,\dots, m\}$ as well as $\sv {N_k} = 0$ for all~$k \in \{1,\dots, n\}$.
  \item Moreover, $m + n - 1 - \sum_{k=1}^n 1 / |\pi_1(N_k)| > \sum_{j=1}^m \sv {M_j}$ (with
    the convention that $1/\infty := 0$).
  \end{enumerate}
  Then the connected sum~$M \coloneqq M_1 \connsum \dots \connsum M_m
  \connsum N_1 \connsum \dots \connsum N_n$ does \emph{not} satisfy
  integral approximation for simplicial volume, i.e., we have~$\sv M <
  \stisv M$.
\end{intro_thm}

\begin{intro_cor}\label{cor:intro:nonapprox}
  Let $N$ be an oriented closed connected hyperbolic $3$-manifold and
  let $k > \vol(N)/v_3$. Then the oriented closed connected
  $3$-manifold~$M \coloneqq N \connsum \connsum^k (S^1)^3$
  satisfies~$\sv M < \stisv M$.
\end{intro_cor}

In the case of closed $3$-manifolds with vanishing simplicial volume,
we have a complete characterisation of approximability:

\begin{intro_cor}\label{cor:0noapprox}
  Let $M$ be an oriented closed connected $3$-manifold with $\sv M = 0$. Then
  the following are equivalent:
  \begin{enumerate}
  \item The simplicial volume of~$M$ satisfies integral approximation, i.e., $\stisv M = \sv M$.
  \item The manifold~$M$ is prime and has infinite fundamental group or $M$ is
    homeomorphic to~$\R P^3 \connsum \R P^3$.
  \end{enumerate}       
\end{intro_cor}

\subsection{Strategy of proof of Theorem~\ref{thm:main}}\label{sec:intro:soma}

Clearly, we have~$\stisv{M,\partial M} \geq \sv{M,\partial M}$
(by the degree estimate~\cite[p.~8]{Grom82}).
Therefore, in combination with Soma's computation~\cite{Soma},
in the situation of Theorem~\ref{thm:main}, we have
\[ \stisv{M,\partial M}
   \geq \sv{M,\partial M}
   =    \frac{\hypvol(M)}{v_3}.
\]
Thus, it suffices to show the converse estimate
\[ \stisv{M,\partial M} \leq \frac{\hypvol(M)}{v_3}.
\]
As in Soma's computation of the classical simplicial volume of
$3$-manifolds, we use the JSJ decomposition and hyperbolisation to cut
irreducible manifolds~$M$ along tori into pieces~$W$ that are
hyperbolic or Seifert-fibered, and not covered by~$S^3$ (the
additional case~$M\cong S^1 \times S^2$ being also Seifert-fibered).

If a piece $W$ is Seifert-fibered (or, more generally, a non-elliptic 
graph manifold), then it is known that
$\stisv {W,\partial W} = 0 = \hypvol(W)/v_3$~\cite[Section~8]{LP}\cite{FFL}.

Therefore, two main challenges remain:
\begin{itemize}
\item
  the hyperbolic case with toroidal boundary and
\item
  subadditivity with respect to glueings along tori.
\end{itemize}
To this end, it is convenient to rewrite stable integral
simplicial volume as parametrised simplicial volume
with respect to the canonical action on the profinite
completion of the fundamental group(oid)
(Section~\ref{subsec:profinsimvol}):
\[ \stisv {W,\partial W} = \ifsv {W,\partial W}^{\pfc{\pi_1(W)}}.
\]

For subadditivity with respect to glueings along tori,
we need control over the size of the boundaries of the
relative fundamental cycles. In order to avoid a 
technically demanding equivalence theorem, we proceed
similarly to Soma's work with parametrised relative simplicial
volume~$\ifsv{W,\partial W}_\partial^{\pfc{\pi_1(W)}}$ with boundary control
(Section~\ref{subsec:boundarycontrolsimvol}). Then, as
in the case of graph manifolds~\cite{FFL}, we can use the uniform
boundary condition on tori in the para\-metrised setting~\cite{fauserloehUBC},
to establish subadditivity (Section~\ref{sec:glue}). A
subtle point is that, during the glueing step, we also
need to stay in control of the parameter spaces; at
this point, we will use profinite properties of
JSJ decompositions (Section~\ref{subsec:profin3}).

Finally, we need to show for hyperbolic pieces~$W$ that
$\ifsv{W,\partial W}_\partial^{\pfc{\pi_1(W)}} = \vol(W^\circ)/v_3$.
This generalisation of the closed case~\cite[Theorem~1.7]{FLPS}
will take up a large part of the paper. More specifically, we
will proceed in two steps:

First, using a suitably
adapted smearing process and the approximation results
in the closed case, we establish the following
proportionality in the open case (Section~\ref{sec:hyplf}):

\begin{intro_thm}\label{thm:hyp3}
  Let $M$ be an oriented complete connected finite-volume hyperbolic
  $3$-manifold (without boundary). Then
  \[ \ifsvlf M 
   = \frac{\vol(M)}{v_3}.
  \]
\end{intro_thm}

Second, we relate this locally finite version to the parametrised
simplicial volume with boundary control of the ambient compact
manifold~$W$: We have~$\ifsvlf M \geq \ifsv{W,\partial W}_\partial$
(Section~\ref{subsec:gaincontrol}).  Combining the fact that
$\pi_1(W)$~satisfies Property~$\EMD*$ from ergodic theory
(Proposition~\ref{prop:emd}) with monotonicity of boundary-controlled
integral foliated simplicial volume with respect to weak containment
of parameter spaces (Proposition~\ref{prop:weakcont}), we then obtain
(Section~\ref{sec:proof:cor:prof}):

\begin{intro_cor}\label{cor:hyp3rel}
  Let $W$ be an oriented compact connected hyperbolic $3$-mani\-fold
  with empty or toroidal boundary and let $M \coloneqq W^\circ$. Then 
  \[ \ifsv{W,\partial W}^{\widehat{\pi_1(W)}}_\partial
   = \frac{\vol(M)}{v_3}.
  \]
\end{intro_cor}

\subsection*{Overview of this article}

We recall basic terminology related to integral foliated/parametrised
simplicial volume in Section~\ref{sec:simvolcompact}.  In
Section~\ref{sec:simvolopen}, we generalise this setup to the case of
open manifolds. We then prove proportionality for complete hyperbolic
$3$-manifolds of finite volume: The locally finite case is established
in Section~\ref{sec:hyplf}; the relative case is derived in
Section~\ref{sec:hyprel}. The JSJ glueing argument and the proof of
the main theorem (Theorem~\ref{thm:main}) are explained in
Section~\ref{sec:glue}.  Finally, in Section~\ref{sec:nonapprox}, we
prove the non-approximation results.

\subsection*{Acknowledgements}

We are grateful to the anonymous referee for the many useful comments and suggestions
to improve this paper. 

\section{Integral foliated simplicial volume: The compact case}\label{sec:simvolcompact}

We recall basic terminology related to integral foliated simplicial
volume in the compact case; the open case will be covered in
Section~\ref{sec:simvolopen}. Integral foliated simplicial volume of a
compact manifold~$M$ is a variation of simplicial volume with local
coefficients in integer-valued $L^\infty$-functions on a probability
space with a $\pi_1(M)$-action.  Therefore, we first briefly review
normed local coefficients.

\subsection{Normed local coefficients}

We now focus on local coefficient systems that carry a compatible
norm. Our main example will be spaces of essentially bounded functions
(with the $L^1$-norm) on standard Borel probability spaces.

\begin{defn}[normed local coefficient system]
	Let~$W$ be a topological space. A \emph{normed local
          coefficient system on~$W$} is a functor from the fundamental
        groupoid~$\Pi(W)$ of~$W$ to the category of normed abelian
        groups and norm non-increasing group homomorphisms.
\end{defn}

Given a $k$-simplex $\sigma \in \map(\Delta^k, W)$ in a topological
space~$W$, the $l$-dimensional face of $\sigma$ spanned by the
vertices $\sigma(e_{i_0}), \ldots, \sigma(e_{i_l})$ (with $0 \le i_0 <
\ldots < i_l \le k$) will be denoted by~$\sigma[i_0, \ldots, i_l]$.

\begin{defn}[chain complex with local coefficients]
\label{def:chainslc}
	Let~$W$ be a topological space and let~$L$ be a normed local
        coefficient system on~$W$.  Then the \emph{chain complex
          of~$W$ with local coefficients in~$L$} is given by
	\[ C_k(W;L) \coloneqq \bigoplus_{\sigma\in \map(\Delta^k,W)} L\bigl(\sigma[0]\bigr)
	\]
	for each $k\in \N$, with boundary operators
	\begin{align*}
		C_k(W;L) &\longrightarrow C_{k-1}(W;L)\\
		a\cdot \sigma &\longmapsto L\bigl(\sigma[0,1]\bigr) (a) \cdot \partial_0 \sigma
				+ \sum_{i=1}^k {(-1)^i \cdot a \cdot \partial_i \sigma}.
	\end{align*}
	We equip it with the $\ell^1$-norm~$|\cdot |_{1,L}$ induced by
        the norm on~$L$.

	If~$V\subseteq W$ is a subspace, we write~$C_*^W(V;L)$ for all
        chains in $C_*(W;L)$ that are supported in~$V$.  We define the
        \emph{chain complex of~$W$ relative to~$V$ with local
          coefficient system~$L$} by
        \[ C_*(W,V;L) \coloneqq C_*(W;L) / C_*^W(V;L)
        \]
        and endow it with the quotient norm of~$|\cdot|_{1,L}$.
\end{defn}

\begin{defn}[homology with local coefficients]
  Let~$W$ be a topological space, let~$L$ be a normed local
  coefficient system on~$W$ and let~$V\subseteq W$ be a subspace.  We
  define the $k$-th \emph{homology group with local coefficient
    system~$L$} by
  \[ H_k(W,V;L) \coloneqq H_k\bigl( C_*(W,V;L)\bigr)
  \]
  and write $\|\cdot\|_{1,L}$ for the induced $\ell^1$-seminorm on
  homology.
\end{defn}

\begin{defn} [standard $G$-space]
	Let~$G$ be a groupoid. A \emph{standard $G$-space} is a contravariant functor from~$G$ to 
	the category of all standard Borel probability spaces and probability
	measure-preserving transformations.
\end{defn}

\begin{defn}[associated normed local coefficient system]\label{def:ass:norm:loc:coeff:syst}
	Let~$G$ be a groupoid and let~$\alpha$ be a standard
        $G$-space.  Then the \emph{associated normed local coefficient
          system}~$\linfz \alpha$ \emph{to}~$\alpha$ \emph{on}~$G$ is
        the post-composition of~$\alpha$ with the (contravariant)
        dualising functor~$L^\infty(-,\Z)$. In other words,
	\[ \linfz \alpha (x) \coloneqq \linfz {\alpha(x)}
	\]
	for all objects~$x$ in~$G$ (equipped with the $L^1$-norm), and
	\begin{align*}
	\linfz \alpha (g) \colon \linfz {\alpha(x)} &\longrightarrow \linfz {\alpha (y)} \\
	f &\longmapsto f\circ \alpha (g)
	\end{align*}
	for all morphisms~$g\colon x \longrightarrow y$ in $G$.
\end{defn}

\begin{rem}[from groups to groupoids] \label{rem:groupoidvsgroup}
	Local coefficient systems are quite similar to the more
        conventional twisted coefficients.  In the setting of local
        (rather than twisted) coefficients, the role of the
        fundamental group is played by the fundamental groupoid, which
        is convenient because it spares us from caring about
        basepoints, and from working at the level of the universal
        cover.
	
	Given a topological space~$W$ and a standard
        $\Pi(W)$-space~$\alpha$, we obtain, for each choice of
        basepoint~$x_0\in W$, a canonical standard $\pi_1(W,
        x_0)$-space by restriction of~$\alpha$ to~$x_0$.  That is, we
        let $\pi_1(W, x_0)$~act on the Borel probability
        space~$\alpha(x_0)$ by $\gamma\cdot x\coloneqq
        \alpha(\gamma)(x)$, where $\gamma\in\pi_1(W, x_0)$ and $x\in
        \alpha(x_0)$.
	
	Conversely, if $W$~is path-connected, then each standard
        $\pi_1(W, x_0)$-space~$X$ can be extended to a standard
        $\Pi(W)$-space~$\alpha$ by first choosing, for each
        point~$p\in W$, a path~$\gamma_p$ (up to homotopy class
        relative to endpoints) from$~x_0$ to~$p$, and then setting:
	\begin{itemize}
		\item At every point~$p \in W$, we put~$\alpha(p) \coloneqq X$.
		\item For each morphism~$(\gamma \colon p \rightarrow q)$ in~$\Pi(W)$, we
                  set~$h_\gamma := \gamma_p * \gamma * \gamma_q^{-1} \in \pi_1(W,x_0)$ and 
		\begin{align*}
		\alpha(\gamma) \colon X & \to X \\
		x &\mapsto h_\gamma \cdot x.
		\end{align*}
	\end{itemize}
	It is easily verified that this makes~$\alpha$ a contravariant
        functor, whose restriction to~$x_0$, as explained previously,
        recovers the left $\pi_1(W, x_0)$-action on~$X$. This is
        ultimately a reflection of the fact that when
        $\pi_1(W,x_0)$~is regarded as a one-object sub-category
        of~$\Pi(W)$, the inclusion~$\pi_1(W,x_0) \hookrightarrow
        \Pi(W)$ is an equivalence of categories.
	
	We should also mention that the choice of path classes
        $(\gamma_p\colon x_0 \to p)_{p \in W}$ is immaterial, as
        picking a different collection $(\gamma'_p\colon x_0 \to p)_{p
          \in W}$ leads to the construction of a contravariant
        functor~$\alpha'$ that is isomorphic to~$\alpha$, in the
        category-theoretical sense.  Indeed, it is easy to verify that
        the maps
	\begin{align*}
	X & \to X\\
	x & \mapsto (\gamma_p' *\gamma_p^{-1}) \cdot x,
	\end{align*}
	over all~$p\in W$, assemble to a natural
        transformation~$\alpha \to \alpha'$, which is clearly
        invertible.
	
	Similarly, a normed local coefficient system~$L$ on a
        topological space~$W$ can be restricted to a chosen
        basepoint~$x_0 \in W$, yielding a normed \emph{right}
        $\pi_1(W, x_0)$-module~$L(x_0)$. And conversely, a right
        $\pi_1(W, x_0)$-action on a normed abelian group~$A$ can be
        extended to a normed local coefficient system~$L$ that is
        constantly~$A$ on objects, by choosing paths $\gamma_p\colon
        x_0\to p$ as before, and setting, for each~$(\gamma\colon p\to
        q)$ in~$\Pi(W)$,
	\begin{align*}
	  L(\gamma) \colon A & \to A \\
	  a &\mapsto a \cdot (\gamma_p * \gamma *\gamma_q^{-1}).
	\end{align*}
	It is straightforward to check that homology with twisted
        coefficients in a normed right $\pi_1(W,x_0)$-module $A$ is
        isomorphic to homology with local coefficients in any normed
        local coefficient system obtained as an extension of~$A$.
	
	We also remark that all these constructions are compatible
        with the dualising procedure introduced in
        Definition~\ref{def:ass:norm:loc:coeff:syst}.  Indeed, the
        construction of the associated normed local coefficient system
        to a standard $\Pi(W)$-space fits into the following
        commutative diagram:\medskip
	
	\centering
	\begin{tikzcd}
	\begin{minipage}{0.25\textwidth}
	\centering
	Standard (left) $\pi_1(W,x_0)$-spaces
	\end{minipage} \arrow[d, "{L^\infty(-,\Z)}"] \arrow[r, "\text{extend}"]&
	\begin{minipage}{0.25\textwidth}
	\centering
	Standard $\Pi(W)$-spaces
	\end{minipage} \arrow[d, "{L^\infty(-,\Z)}"] \arrow[r, "\text{restrict}"]  &
	\begin{minipage}{0.25\textwidth}
	\centering
	Standard (left) $\pi_1(W,x_0)$-spaces
	\end{minipage} \arrow[d, "{L^\infty(-,\Z)}"]\\
	\begin{minipage}{0.25\textwidth}
	\centering
	Normed right $\pi_1(W, x_0)$-modules
	\end{minipage} \arrow[r, "\text{extend}"]&
	\begin{minipage}{0.25\textwidth}
	\centering
	Normed local coefficient systems
	\end{minipage} \arrow[r,"\text{restrict}"]&\begin{minipage}{0.25\textwidth}
	\centering
	Normed right $\pi_1(W, x_0)$-modules
	\end{minipage}
	\end{tikzcd}
	
\end{rem}

\subsection{Integral foliated simplicial volume}

Integral foliated simplicial volume relaxes the integrality of coefficients
by allowing for integer-valued coefficient functions on a probability space.

\begin{defn}[parametrised fundamental class with local coefficients]
	Let~$M$ be an oriented compact connected $n$-manifold and
        let~$\alpha$ be a standard $\Pi(M)$-space.  Then the
        \emph{$\alpha$-\emph{parametrised fundamental
            class}~$[M,\partial M]^\alpha$ of~$M$} is defined to be
        the image of the integral fundamental class~$[M,\partial M]$
        of~$M$ under the change of coefficient map
	\[ H_n(M,\partial M;\Z) \longrightarrow H_n\bigl(M,\partial M;\linfz \alpha\bigr) 
	\]
	induced by the inclusion of~$\Z$ into~$\linfz \alpha$ as constant functions.
\end{defn}

\begin{defn}[integral foliated simplicial volume]
  Let~$M$ be an oriented compact connected manifold and
  let $\alpha$ be a standard $\Pi(M)$-space. Then the
  \emph{$\alpha$-parametrised simplicial volume of~$M$} is
  defined by 
  \[ \ifsv{M,\partial M}^\alpha \coloneqq \sv{[M,\partial M]^\alpha}_{1,\linfz \alpha}.
  \]
  Moreover, the \emph{integral foliated simplicial volume~$\ifsv
    {M,\partial M}$ of~$M$} is the infimum of all parametrised
  simplicial volumes, where the infimum is taken over all standard
  $\Pi(M)$-spaces.

  If $M$ is closed, we also write~$\ifsv M^\alpha$ and $\ifsv M$
  for the corresponding quantities.
\end{defn}
	
\begin{prop}[comparison with ordinary simplicial volume~\protect{\cite[Theorem~5.35]{Sthesis}}\cite{Gromovbook}]\label{prop:boundaryreal}
  Let $W$ be an oriented compact connected manifold and let $\alpha$
  be a standard $\Pi(M)$-space. Then integration of the coefficients
  shows that
  \[ \sv{W,\partial W} \leq \ifsv{W,\partial W}^\alpha.
  \]
  In particular, $\sv{W,\partial W} \leq \ifsv{W,\partial W}$.
\end{prop}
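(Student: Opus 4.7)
The plan is to construct a norm non-increasing chain map
\[ I \colon C_*\bigl(W; \linfz\alpha\bigr) \longrightarrow C_*(W;\R) \]
sending the parametrised fundamental class $[W,\partial W]^\alpha$ to the classical real fundamental class $[W,\partial W]_\R$; the proposition will then follow immediately by taking infima over representing cycles (and, for the second assertion, also over all~$\alpha$).

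I would define $I$ on generators by
\[ I(f \cdot \sigma) \coloneqq \Bigl(\int_{\alpha(\sigma[0])} f \d\mu_{\alpha(\sigma[0])}\Bigr) \cdot \sigma, \]
extended linearly, where $\mu_{\alpha(x)}$ denotes the probability measure on $\alpha(x)$. The first step is to verify that $I$ is a chain map. For each face index $i \geq 1$ we have $(\partial_i \sigma)[0] = \sigma[0]$, so the integration measure is unchanged and no coefficient transport appears in Definition~\ref{def:chainslc}; the corresponding terms of $\partial \circ I$ and $I \circ \partial$ match tautologically. For $i = 0$, the boundary formula carries the transported coefficient $\linfz\alpha(\sigma[0,1])(f) = f \circ \alpha(\sigma[0,1])$ attached to $\partial_0\sigma$, which $I$ integrates over $\alpha(\sigma[1])$; the measure-preservation of $\alpha(\sigma[0,1]) \colon \alpha(\sigma[1]) \to \alpha(\sigma[0])$ yields
\[ \int_{\alpha(\sigma[1])} f \circ \alpha(\sigma[0,1]) \d\mu_{\alpha(\sigma[1])} = \int_{\alpha(\sigma[0])} f \d\mu_{\alpha(\sigma[0])}, \]
matching the $i=0$ coefficient of $\partial I(f \cdot \sigma)$. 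Since $I$ clearly preserves the subcomplex of chains supported in $\partial W$, it descends to the relative complex.

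Next I would check that $I$ is norm non-increasing: elementwise, $|\int f \d\mu| \leq \int |f| \d\mu = \|f\|_{L^1}$, so summing over generators gives $|I(c)|_1 \leq |c|_{1,\linfz\alpha}$ for every chain~$c$. Finally, the inclusion $\Z \hookrightarrow \linfz\alpha$ sends an integer to the corresponding constant function, whose integral against a probability measure equals that very integer; hence the composition
\[ C_*(W;\Z) \longrightarrow C_*\bigl(W;\linfz\alpha\bigr) \xrightarrow{\ I\ } C_*(W;\R) \]
agrees with the standard change-of-coefficients map, and therefore $I_*\bigl([W,\partial W]^\alpha\bigr) = [W,\partial W]_\R$. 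Given any cycle $c$ representing $[W,\partial W]^\alpha$, the chain $I(c)$ is a real relative fundamental cycle with $|I(c)|_1 \leq |c|_{1,\linfz\alpha}$, and passing to the infimum proves both claims.

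I expect no substantial obstacle beyond careful bookkeeping; the only mildly delicate point is the $i=0$ face, where the contravariance and transport of coefficients must be absorbed by the measure-preservation hypothesis on~$\alpha$. This is precisely where the assumption that $\alpha$ takes values in \emph{measure-preserving} transformations enters the argument in an essential way.
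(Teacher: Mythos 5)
Your argument is correct and is exactly the ``integration of the coefficients'' method that the paper invokes (via the cited references and, in the locally finite setting, via the map $I_\R$ of Proposition~\ref{prop:integration:coeff:inequality:ord:foliated}); the one delicate point, the $0$-th face where measure-preservation of $\alpha(\sigma[0,1])$ absorbs the coefficient transport, is handled correctly. No discrepancy with the paper's approach.
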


\subsection{The profinite completion}\label{subsec:profinsimvol}

The link between stable integral simplicial volume and the
parametrised simplicial volume and ergodic theory
is given by the profinite completion of the fundamental group.

\begin{defn}[profinite completion of a group]\label{def:pfc}
	Let $\Gamma$ be a countable group and consider the inverse
        system of finite-index normal subgroups of~$\Gamma$ together
        with their inclusion homomorphisms. The inverse limit
	\[ \pfc\Gamma \coloneqq \varprojlim_{\Lambda \underset{\text{f. i.}}{\trianglelefteq} \Gamma} \Gamma/\Lambda\]
	of the corresponding group quotients is called the
        \emph{profinite completion} of~$\Gamma$.  The left translation
        action and the normalised counting measures on the
        quotients~$\Gamma/\Lambda$ turn~$\pfc\Gamma$ into a standard
        $\Gamma$-space.

	Moreover, group homomorphisms induce canonical maps on
        profinite completions, making this definition functorial
        \cite[Lemma~3.2.3]{ribes2000profinite}.
\end{defn}

If $\Gamma$ is a countable group, then a straightforward computation
shows that the standard $\Gamma$-space~$\pfc \Gamma$ is (essentially)
free if and only if $\Gamma$ is residually finite.

\begin{defn}[profinite completion of the fundamental groupoid]
  Let $W$ be a path-connected topological space. The standard
  $\Pi(W)$-space~$\pfc{\Pi(W)}$ is defined as follows:
  \begin{itemize}
  \item For each~$p \in W$, we set~$\pfc{\Pi(W)}(p) \coloneqq \pfc{\pi_1(W,p)}$.
  \item Given a morphism~$\gamma \colon p \rightarrow q$ in~$\Pi(W)$, we take
  $\pfc{\Pi(W)}(\gamma) \colon \pfc{\pi_1(W,q)} \to \pfc{\pi_1(W,p)}$
  to be the map induced on profinite completions by  
    \begin{align*}
	\pi_1(W, q) &\to \pi_1(W, p)\\
	\alpha & \mapsto \gamma * \alpha * \gamma^{-1}.
    \end{align*}
  \end{itemize}
\end{defn}
	
If $x_0 \in W$, then the standard $\Pi(W)$-space~$\pfc{\Pi(W)}$ is
isomorphic to the ``extension'' construction from
Remark~\ref{rem:groupoidvsgroup}, applied to the standard
$\pi_1(W,x_0)$-space~$\pfc{\pi_1(W,x_0)}$.

\begin{prop}[stable integral simplicial volume via profinite completion]\label{prop:ifsvprofin}
  Let $W$ be an oriented compact connected manifold.
  Then
  \[ \ifsv{W,\partial W}^{\pfc{\Pi(W)}}
     = \stisv{W,\partial W}.
  \]
\end{prop}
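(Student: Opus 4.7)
My plan is to prove the two inequalities separately, both anchored to the same Shapiro-type identification of finite-quotient parametrised cycles with integral cycles on finite covers.

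First, for $\ifsv{W,\partial W}^{\pfc{\Pi(W)}} \leq \stisv{W,\partial W}$, fix a basepoint $x_0 \in W$ and write $\Gamma = \pi_1(W, x_0)$. For each finite-index normal subgroup $\Lambda \trianglelefteq \Gamma$ of index $d$, let $W_\Lambda \to W$ denote the associated regular $d$-sheeted cover. The quotient map $\pfc\Gamma \to \Gamma/\Lambda$ is measure-preserving when $\Gamma/\Lambda$ carries its normalised counting measure, so pullback provides an isometric, $\Gamma$-equivariant embedding $L^\infty(\Gamma/\Lambda, \Z) \hookrightarrow L^\infty(\pfc\Gamma, \Z)$. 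A Shapiro-style computation (using $L^\infty(\Gamma/\Lambda, \Z) \cong \Z[\Gamma/\Lambda]$ as right $\Gamma$-modules) yields an isometric chain-complex isomorphism
\[ C_*(W, \partial W; L^\infty(\Gamma/\Lambda, \Z)) \;\cong\; \tfrac{1}{d}\cdot C_*(W_\Lambda, \partial W_\Lambda; \Z) \]
that matches parametrised fundamental classes with integral fundamental classes of covers (the factor $1/d$ reflecting the change from counting to normalised counting measure). Therefore every integral relative fundamental cycle $z$ of $W_\Lambda$ yields a $\pfc{\Pi(W)}$-parametrised relative fundamental cycle of $W$ of $L^1$-norm $|z|_1/d$, and taking infima gives the inequality.

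Next, for the reverse inequality $\ifsv{W,\partial W}^{\pfc{\Pi(W)}} \geq \stisv{W,\partial W}$, I would argue by approximation. Given $\varepsilon > 0$, choose a $\pfc{\Pi(W)}$-parametrised relative fundamental cycle $c = \sum_{j=1}^m f_j \otimes \sigma_j$ with $|c|_1 < \ifsv{W,\partial W}^{\pfc{\Pi(W)}} + \varepsilon$. Each $f_j \in L^\infty(\pfc\Gamma, \Z)$ takes only finitely many integer values, and in the profinite group $\pfc\Gamma$ the clopen subsets — which are precisely the finite unions of cosets of open normal subgroups — are dense in the measure algebra. A simultaneous approximation of the finitely many level sets of $f_1, \dots, f_m$ therefore produces a single finite-index normal subgroup $\Lambda \trianglelefteq \Gamma$ and integer-valued step functions $\tilde f_j \in L^\infty(\Gamma/\Lambda, \Z) \hookrightarrow L^\infty(\pfc\Gamma, \Z)$ with $\sum_j |\tilde f_j - f_j|_1 < \varepsilon$; setting $\tilde c = \sum_j \tilde f_j \otimes \sigma_j$ gives $|\tilde c|_1 \leq |c|_1 + \varepsilon$.

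The hard part will be the cycle-repair step: $\tilde c$ need not be a relative fundamental cycle, since rounding the coefficients can both push $\partial c$ out of $C^W_{n-1}(\partial W; \cdot)$ and perturb the represented homology class. Both errors have $L^1$-norm $O(\varepsilon)$, so the goal is to adjust $\tilde c$ by a chain of comparable norm, living inside $C_*(W, \partial W; L^\infty(\Gamma/\Lambda, \Z))$, to restore the fundamental-class property. To achieve this I would exploit that the inclusion $L^\infty(\Gamma/\Lambda, \Z) \hookrightarrow L^\infty(\pfc\Gamma, \Z)$ preserves the parametrised fundamental classes, so that a small boundary realising the discrepancy in the larger complex can, after refining $\Lambda$ if needed, be captured inside the smaller one. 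The resulting honest parametrised fundamental cycle $\tilde c'$ corresponds, via the isomorphism from the first step, to an integral fundamental cycle $z$ of $W_\Lambda$ with $|z|_1/d \leq |c|_1 + O(\varepsilon)$; letting $\varepsilon \to 0$ then yields $\stisv{W,\partial W} \leq \ifsv{W,\partial W}^{\pfc{\Pi(W)}}$.
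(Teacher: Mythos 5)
The first inequality of your proposal is essentially correct and is the standard route (pullback along the measure-preserving projection $\pfc{\Gamma}\to\Gamma/\Lambda$ is isometric and preserves parametrised fundamental classes, and the Shapiro-type identification gives $\ifsv{W,\partial W}^{\Gamma/\Lambda}=\tfrac1d\isv{W_\Lambda,\partial W_\Lambda}$); you should only add the easy remark that restricting to \emph{regular} covers is harmless, since passing from an arbitrary finite cover to the cover of the normal core does not increase the normalised integral norm, so the infimum over regular covers still equals $\stisv{W,\partial W}$.

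The genuine gap is the ``cycle-repair step'' in the converse inequality, which you flag as the hard part but do not actually carry out, and which as set up does not go through. After rounding the coefficients of $c$ itself, the chain $\tilde c$ need in general not even be a \emph{relative} cycle, and the smallness of $|\tilde c - c|_1$ does not produce a small-norm correction lying in the subcomplex $C_*(W;\linfz{\Gamma/\Lambda})$: what you would need is that a chain whose boundary is $\varepsilon$-small modulo $\partial W$ is $O(\varepsilon)$-close to an honest relative fundamental cycle, and this is a uniform-boundary-condition--type statement that is not available for an arbitrary compact manifold $W$ (nor does $\ell^1$-smallness control the homology class represented after repair). The standard way to avoid any repair --- and this is how the cited closed-case proofs and the paper's own Appendix argument for Proposition~\ref{prop:weakcont} proceed --- is to approximate not $c$ but a witness of its being a relative fundamental cycle: write
\[ c \;=\; z + \partial b + w \quad\text{in } C_n\bigl(W;\linfz{\pfc{\Gamma}}\bigr), \]
where $z$ is an \emph{integral} relative fundamental cycle, $b\in C_{n+1}(W;\linfz{\pfc{\Gamma}})$ and $w$ is supported on $\partial W$. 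Now approximate the finitely many coefficient functions of $b$ and $w$ (not of $c$) in $L^1$ by $\Z$-valued functions pulled back from a single finite quotient $\Gamma/\Lambda$, and set $c'\coloneqq z+\partial b'+w'$. Since $z$ has constant coefficients and $\linfz{\Gamma/\Lambda}$ is a $\Gamma$-submodule of $\linfz{\pfc{\Gamma}}$, the chain $c'$ is \emph{automatically} a relative fundamental cycle with coefficients in $\linfz{\Gamma/\Lambda}$, and $|c'|_1\le |c|_1+(n+2)\,|b-b'|_1+|w-w'|_1\le |c|_1+\varepsilon$. Feeding $c'$ into your Shapiro isomorphism then yields $\stisv{W,\partial W}\le \ifsv{W,\partial W}^{\pfc{\Pi(W)}}$ directly, with no repair step at all.
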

\begin{proof}
  The proof for the closed case (with twisted
  coefficients)~\cite[Remark~6.7]{LP}\cite[Theorem~2.6]{FLPS} can be
  adapted to the relative case (with local coefficients) in a
  straightforward manner.
\end{proof}

\subsection{Adding boundary control}\label{subsec:boundarycontrolsimvol}

When proving additivity estimates for simplicial volumes under glueings,
one needs additional control on the boundary. We will use the following
version of integral foliated simplicial volume,
similar to the relative simplicial volume considered
by Thurston~\cite[Chapter~6.5]{Thurston}:

\begin{defn}[integral foliated simplicial volume with boundary control]
  Let $W$ be an oriented compact connected manifold and let $\alpha$
  be a standard $\Pi(W)$-space. Then the \emph{controlled
    $\alpha$-parametrised simplicial volume of~$W$} is defined by
  \begin{align*}
    \ifsv {W,\partial W}_\partial^\alpha
    \coloneqq \sup_{\varepsilon \in \R_{>0}}
    \inf
    \bigl\{ |c|_1
    \bigm|
    & \; \text{$c \in C_n(W; \linfz \alpha)$ relative $\alpha$-fundamental}
    \\
    & \; \text{cycle of~$W$,\
          $|\partial c|_1 \leq \varepsilon$}
    \bigr\}
  \end{align*}
  (with the convention that $\inf \emptyset = + \infty$).
  
  Taking the infimum over all~$\alpha$, we obtain~$\ifsv {W,\partial W}_\partial$.
\end{defn}

It follows from this definition that if $\ifsv{\partial W} > 0$, then
$\ifsv{W,\partial W}_\partial = + \infty$, and we also clearly
have~$\ifsv {W,\partial W} \leq \ifsv{W,\partial W}_\partial$. Whether
the converse inequality holds is a more subtle question, because
it involves a simultaneous optimisation problem. We show in
Corollary~\ref{cor:all:simpl:vol:equal} that if $W$ is a compact
hyperbolic $3$-manifold with toroidal boundary, then this is the case.
We do have that vanishing of~$\ifsv{W, \partial W}$ implies vanishing
of~$\ifsv{W, \partial W}_\partial$:

\begin{lemma}[vanishing of integral foliated simplicial volume transfers to boundary-controlled setting]\label{lem:boundaryvanishing}
  Let $W$~be an oriented compact connected $n$-manifold and $\alpha$~a
  standard $\Pi(W)$-space.  If $\ifsv{W, \partial W}^\alpha = 0$, then
  also $\ifsv{W, \partial W}_\partial^\alpha = 0$.
\end{lemma}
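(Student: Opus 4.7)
The plan is to observe that the boundary operator on $C_*(W; \linfz \alpha)$ is bounded with respect to the $\ell^1$-norm, so that any sequence of relative $\alpha$-fundamental cycles with norm tending to zero automatically has boundary norm tending to zero, allowing both conditions in the definition of $\ifsv{W,\partial W}_\partial^\alpha$ to be met simultaneously.

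More concretely, first I would verify that the operator norm of $\partial \colon C_n(W; \linfz \alpha) \to C_{n-1}(W; \linfz \alpha)$ is at most $n+1$. This follows directly from the boundary formula in Definition~\ref{def:chainslc}: for a single term $a \cdot \sigma$, the boundary is
\[ \partial(a \cdot \sigma) = \linfz \alpha\bigl(\sigma[0,1]\bigr)(a) \cdot \partial_0 \sigma + \sum_{i=1}^n (-1)^i \cdot a \cdot \partial_i \sigma, \]
and since normed local coefficient systems act by norm non-increasing homomorphisms (so $|\linfz \alpha(\sigma[0,1])(a)|_{L^1} \le |a|_{L^1}$), we obtain $|\partial(a \cdot \sigma)|_{1,\linfz \alpha} \le (n+1) \cdot |a|_{L^1}$. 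Extending by the triangle inequality, $|\partial c|_{1,\linfz \alpha} \le (n+1) \cdot |c|_{1,\linfz \alpha}$ for every $c \in C_n(W; \linfz \alpha)$.

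Next, from $\ifsv{W,\partial W}^\alpha = 0$ I would extract a sequence $(c_k)_{k \in \N}$ of relative $\alpha$-fundamental cycles with $|c_k|_{1,\linfz \alpha} \to 0$. Given any $\varepsilon > 0$, the bound above yields $|\partial c_k|_{1,\linfz \alpha} \le (n+1) \cdot |c_k|_{1,\linfz \alpha}$, so for all sufficiently large $k$, the cycle $c_k$ satisfies $|\partial c_k|_{1,\linfz \alpha} \le \varepsilon$ while simultaneously having arbitrarily small $\ell^1$-norm. Hence
\[ \inf \bigl\{ |c|_{1,\linfz \alpha} \bigm| c \text{ rel.\ $\alpha$-fundamental cycle,\ } |\partial c|_{1,\linfz \alpha} \le \varepsilon \bigr\} = 0 \]
for every $\varepsilon > 0$, and taking the supremum over $\varepsilon$ gives $\ifsv{W,\partial W}_\partial^\alpha = 0$.

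There is no real obstacle here: the only point requiring attention is confirming that the boundary estimate still holds in the presence of local coefficients, which is ensured by the norm non-increasing condition built into the definition of a normed local coefficient system. The harder direction (bounding $\ifsv{W,\partial W}_\partial^\alpha$ by $\ifsv{W,\partial W}^\alpha$ with explicit constants, as opposed to just transferring vanishing) is what requires the equivalence-type results alluded to in Corollary~\ref{cor:all:simpl:vol:equal}.
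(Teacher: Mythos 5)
Your proposal is correct and follows essentially the same argument as the paper: both use the bound $|\partial c|_{1} \le (n+1)\cdot|c|_{1}$ (valid with local coefficients because the structure maps are norm non-increasing) and then choose a relative $\alpha$-fundamental cycle of sufficiently small $\ell^1$-norm so that the norm and boundary conditions are met simultaneously. The only cosmetic difference is that you phrase it via a sequence of cycles while the paper picks a single cycle with $|c|_1 < \min(\varepsilon, \varepsilon_\partial/(n+1))$.
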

\begin{proof}
  Let $\varepsilon, \varepsilon_\partial >0$.  Since $\ifsv{W,
    \partial W}^\alpha = 0$, there exists a relative
  $\alpha$-fundamental cycle~$c \in C_n(W; \linfz \alpha)$ of~$W$
  with~$|c|_1 < \min(\varepsilon, \frac{\varepsilon_\partial}{n+1})$.
  It follows that $|c|_1 < \varepsilon$ and $|\partial c|_1 \le (n+1)
  |c|_1 < \varepsilon_\partial$, giving the desired boundary control.
\end{proof}

In certain situations, it is helpful to restrict to ergodic parameter
spaces.  Here, a standard $\Pi(W)$-space $\alpha$ is \emph{ergodic} if
the $\pi_1(W,x)$-space~$\alpha(x)$ is ergodic in the classical sense
for one (hence every)~$x \in W$.

\begin{prop}[ergodic parameters suffice]\label{prop:ergodicsuff}
  Let $W$ be an oriented compact connected manifold. Then for
  each~$\varepsilon\in \R_{>0}$, there exists an ergodic standard
  $\Pi(W)$-space~$\alpha$ with
  \[ \ifsv{W,\partial W}^\alpha_\partial \leq \ifsv{W,\partial W}_\partial + \varepsilon.
  \]
\end{prop}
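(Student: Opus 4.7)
The plan is to start from a standard $\Pi(W)$-space $\alpha$ that nearly achieves the infimum in $\ifsv{W,\partial W}_\partial$, apply ergodic decomposition to $\alpha$, and extract an ergodic component whose boundary-controlled parametrised simplicial volume is nearly as small. Concretely, I would fix $\varepsilon \in \R_{>0}$ and, assuming $\ifsv{W,\partial W}_\partial < \infty$ (the other case is vacuous), choose $\alpha$ with $\ifsv{W,\partial W}_\partial^\alpha \leq \ifsv{W,\partial W}_\partial + \varepsilon/2$. Restricting to a basepoint as in Remark~\ref{rem:groupoidvsgroup}, the ergodic decomposition theorem for probability measure-preserving actions of the countable group $\pi_1(W,x_0)$ yields a disintegration $\alpha = \int_Y \alpha_y\, d\mu(y)$ with a.e.\ fibre $\alpha_y$ ergodic; each such $\alpha_y$ extends back to an ergodic standard $\Pi(W)$-space.

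For each $n \in \N_{\geq 1}$, I would pick a relative $\alpha$-fundamental cycle $c_n$ with $|\partial c_n|_1 \leq n^{-2}$ and $|c_n|_1 \leq \ifsv{W,\partial W}_\partial^\alpha + 1/n$; these exist by the sup-inf definition of $\ifsv{W,\partial W}_\partial^\alpha$. Disintegrating $L^\infty(\alpha;\Z)$ fibrewise as $\int_Y L^\infty(\alpha_y;\Z)\, d\mu(y)$ produces chains $c_{n,y}$ which, for a.e.\ $y$, are relative $\alpha_y$-fundamental cycles (the local coefficient maps preserve each ergodic component, so boundaries commute with fibering, and constant integer coefficients descend to constants on each fibre). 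Fubini then yields
\[
|c_n|_1 = \int_Y |c_{n,y}|_1\, d\mu(y) \quad\text{and}\quad |\partial c_n|_1 = \int_Y |\partial c_{n,y}|_1\, d\mu(y).
\]

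The key estimate is $\int_Y \ifsv{W,\partial W}_\partial^{\alpha_y}\, d\mu(y) \leq \ifsv{W,\partial W}_\partial^\alpha$. Markov applied to $|\partial c_{n,y}|_1$ together with Borel--Cantelli lets me assume, after passing to a subsequence, that $|\partial c_{n,y}|_1 \to 0$ for a.e.\ $y$. Then for each $\varepsilon_\partial \in \R_{>0}$ and a.e.\ $y$, the cycle $c_{n,y}$ satisfies $|\partial c_{n,y}|_1 \leq \varepsilon_\partial$ for all sufficiently large $n$; taking the supremum over $\varepsilon_\partial$ gives $\ifsv{W,\partial W}_\partial^{\alpha_y} \leq \liminf_n |c_{n,y}|_1$, and Fatou's lemma produces the asserted integral bound. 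A final application of Markov to the measurable function $y \mapsto \liminf_n |c_{n,y}|_1$ (which dominates $\ifsv{W,\partial W}_\partial^{\alpha_y}$) yields a set of positive measure on which $\ifsv{W,\partial W}_\partial^{\alpha_y} \leq \ifsv{W,\partial W}_\partial + \varepsilon$, and almost every such fibre is ergodic.

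The main obstacle I anticipate is the measurability of $y \mapsto \ifsv{W,\partial W}_\partial^{\alpha_y}$, which is not automatic; I sidestep it by working with the measurable upper bound $\liminf_n |c_{n,y}|_1$. A secondary point to check is that the ergodic decomposition is compatible with the chain-complex structure (equivariance of the disintegration and descent of fundamental classes), which follows from the $\pi_1(W,x_0)$-invariance of the ergodic components.
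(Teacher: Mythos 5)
Your argument is correct and follows essentially the same route as the paper, which proves this proposition by the ergodic decomposition argument of L\"oh--Pagliantini (the closed case) ``taking the coefficients of the boundary contribution into account''. Your use of cycles with summably small boundary norms, Borel--Cantelli, Fatou, and the measurable majorant $\liminf_n |c_{n,y}|_1$ is precisely a correct way of carrying out that boundary bookkeeping, so there is nothing further to add.
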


\begin{proof}
  As in the closed case (with twisted
  coefficients)~\cite[Proposition~4.17]{LP}, this can be shown via an
  ergodic decomposition argument -- we only need to take the
  coefficients of the boundary contribution into account.
\end{proof}

\section{Integral foliated simplicial volume: The non-compact case}\label{sec:simvolopen}

We now extend the definition of integral foliated simplicial volume
to the non-compact case.

\subsection{Basic definitions}

In the non-compact case, we will replace singular chains by
locally finite singular chains (while keeping normed local
coefficients). 

\begin{defn}[locally finite chains, homology with local coefficients]
	Let $W$ be a topological space and let~$L$ be a normed local
        coefficient system on~$W$.  A (possibly infinite)
        chain~$\sum_{\sigma\in \map(\Delta^n,W)} a_\sigma \cdot
        \sigma$ with~$a_\sigma \in L\bigl(\sigma[0]\bigr)$ is called
        \emph{locally finite} if every compact subset of~$W$
        intersects only finitely many singular simplices~$\sigma$
        in~$W$ with~$a_\sigma \neq 0$.  We write~$C_*^{\lf}(W;L)$ for
        the chain modules of all locally finite chains with local
        coefficients in~$L$ and extend the boundary operator from
        Definition~\ref{def:chainslc} to locally finite chains.
	
	Let~$V\subset W$ be a subspace.  We write~$C_*^{\lf,W}(V;L)$
        for the subcomplex of all chains in~$C_*^{\lf}(W;L)$ that are supported
        in~$V$ and define
	\[ C_*^{\lf}(W,V;L) \coloneqq C_*^{\lf}(W;L) / C_*^{\lf,W}(V;L).
	\]
	We obtain a chain complex and write~$|\cdot |_{1,L}$ for the
        $\ell^1$-norm induced by~$L$.

        Moreover, we define the $k$-th \emph{locally finite homology
          group with local coefficient system $L$} by
	\[ H_k^{\lf}(W,V;L) \coloneqq H_k \bigl( C_*^{\lf} (W,V;L) \bigr)
	\]
        for the corresponding homology and $\|\cdot\|_{1,L}$ for
        the (potentially infinite) seminorm induced on
        homology by~$|\cdot|_{1,L}$.
\end{defn}

Strictly speaking, locally finite chains are functions on the
space of singular simplices; however, the suggestive notation
as ``formal sums'' has proved to be efficient in the classical
case of locally finite homology.
It should be noted that the boundary operator is indeed well-defined
(the local finiteness condition takes care of this).

Notice that if $L$ is a functor which associates to any object of the
fundamental groupoid $\Pi(W)$ the normed abelian group~$\mathbb{R}$
and to any morphism the identity on $\mathbb{R}$, we recover the
classical definition of locally finite homology with real
coefficients. Then, one can define the locally finite simplicial
volume of an oriented connected $n$-manifold $M$ without boundary,
denoted by $\lVert M \rVert_\lf$, as the $\ell^1$-seminorm of its real
locally finite fundamental class. This is thoroughly discussed in the
literature~\cite{Grom82, Loeh, FrMo}.

\begin{defn}[locally finite fundamental class with local coefficients]
	Let~$M$ be an oriented connected $n$-manifold without boundary and
        let~$\alpha$ be a standard $\Pi(M)$-space.  Then the
        \emph{$\alpha$-parametrised locally finite fundamental
          class~$[M]^\alpha_{\lf}$ of~$M$} is defined to be the image
        of the integral locally finite fundamental class~$[M]_{\lf}$
        of~$M$ under the change of coefficients map
	\[ H_n^{\lf}(M;\Z) \longrightarrow H_n^{\lf}(M;\linfz \alpha) 
	\]
	induced by the inclusion of~$\Z$ into~$\linfz \alpha$ as
        constant functions.
\end{defn}

\begin{defn}[locally finite integral foliated simplicial volume]
	Let~$M$ be an oriented connected manifold without  boundary and
        let~$\alpha$ be a standard $\Pi(M)$-space.  Then the
        \emph{$\alpha$-parametrised locally finite simplicial volume
          of~$M$} is given by
	\[ \ifsv{M}^\alpha _{\lf} \coloneqq \sv{[M]_{\lf}^\alpha}_{1,\linfz \alpha}.
	\]
	The \emph{locally finite integral foliated simplicial
          volume~$\ifsv{M}_{\lf}$ of~$M$} is defined to be the infimum
        over all parametrised locally finite simplicial volumes
        of~$M$.
\end{defn}

If $M$ is an oriented closed connected manifold, then $\ifsv M ^\alpha
= \ifsv M ^\alpha_{\lf}$ for all standard $\Pi(M)$-spaces~$\alpha$, because
every locally finite chain on a compact space is an ordinary chain (and
vice versa).

\subsection{Integration}

Integrating parametrised locally finite chains over their coefficients
leads to the following comparison between parametrised locally finite
simplicial volume and ordinary locally finite simplicial volume.

\begin{prop}[integration of coefficients]\label{prop:integration:coeff:inequality:ord:foliated}
  Let $M$ be an oriented connected manifold without boundary and let
  $\alpha$ be a standard $\Pi(M)$-space. Then
  \begin{align*}
    I_\R \colon 
    C_*^\lf(M;\linfz \alpha) & \longrightarrow C_*^\lf(M;\R) \\
    \sum_{\sigma\in \map(\Delta^*,M)} f_\sigma \cdot \sigma
    & \longmapsto \sum_{\sigma\in \map(\Delta^*,M)} \biggl(\int_{\alpha(\sigma[0])} f_\sigma\biggr) \cdot \sigma
  \end{align*}
  is a well-defined chain map that maps $\alpha$-parametrised
  fundamental cycles to locally finite $\R$-fundamental cycles.  In
  particular,
  \[ \sv M _\lf \leq \ifsv M ^\alpha_\lf
  \]
  and so
  $$
  \sv M _\lf \leq \ifsv M _\lf.
  $$
\end{prop}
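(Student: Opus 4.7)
The plan is to verify the three asserted properties of $I_\R$ separately --- it is well-defined, a chain map, and sends parametrised fundamental cycles to locally finite $\R$-fundamental cycles --- and then deduce the norm estimates by a straightforward simplex-by-simplex comparison.

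For well-definedness, I would first note that each $\int_{\alpha(\sigma[0])} f_\sigma$ is a genuine real number, since $f_\sigma \in L^\infty(\alpha(\sigma[0]);\Z)$ and $\alpha(\sigma[0])$ is a probability space, so $|\int f_\sigma| \leq |f_\sigma|_1 < \infty$. Local finiteness is then preserved, because the support of $I_\R(c)$ in $\map(\Delta^*,M)$ is contained in that of $c$. For the chain-map property, the key observation is that the local coefficient system $\linfz\alpha$ acts on a morphism $\gamma\colon p\to q$ in $\Pi(M)$ by precomposition with $\alpha(\gamma)\colon \alpha(q)\to \alpha(p)$, which is measure preserving by definition of a standard $\Pi(M)$-space. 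Thus for any $f \in \linfz{\alpha(p)}$,
\[
  \int_{\alpha(q)} f\circ \alpha(\gamma)\, d\mu_q = \int_{\alpha(p)} f\, d\mu_p.
\]
Applied to $\gamma = \sigma[0,1]$, this makes the $0$-th term of the boundary formula in Definition~\ref{def:chainslc} compatible with $I_\R$; the remaining face terms do not involve the coefficient action, so they manifestly commute with integration. Combining the two yields $I_\R \circ \partial = \partial\circ I_\R$.

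To see that $I_\R$ sends $\alpha$-parametrised fundamental cycles to locally finite real fundamental cycles, it suffices to check this on the level of representatives of the fundamental class. By definition, $[M]^\alpha_\lf$ is the image of $[M]_\lf$ under the inclusion $\Z \hookrightarrow \linfz{\alpha(-)}$ as constant functions. Integrating a constant integer-valued function over a probability space returns that same integer, so the composition of the change-of-coefficients map with $I_\R$ agrees with the standard inclusion $\Z\hookrightarrow \R$. Hence $I_\R$ maps any representative of $[M]^\alpha_\lf$ to a representative of the real locally finite fundamental class $[M]_\lf \in H_n^\lf(M;\R)$. Passing to relative chains is unproblematic, because $I_\R$ evidently preserves the subcomplexes of chains supported in a subspace; although the statement is for manifolds without boundary, the same applies if one later needs a relative analogue.

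Finally, for the norm estimate, the triangle inequality for integrals gives $|I_\R(c)|_1 \leq |c|_{1,\linfz\alpha}$ for every locally finite chain $c$. Since $I_\R$ sends $\alpha$-parametrised fundamental cycles to real fundamental cycles, taking the infimum over such cycles yields $\sv{M}_\lf \leq \ifsv{M}^\alpha_\lf$, and taking the infimum over all standard $\Pi(M)$-spaces $\alpha$ gives $\sv{M}_\lf \leq \ifsv{M}_\lf$. I do not expect any real obstacle here; the only delicate point is keeping track of the contravariant action of $\Pi(M)$ on the probability spaces, so that measure-preservation delivers the change-of-variables identity needed for the chain-map verification.
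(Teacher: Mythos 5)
Your proposal is correct and is essentially the argument the paper has in mind: the paper's proof simply cites the closed-case integration-of-coefficients argument (\cite[Remark~5.23]{Sthesis}, \cite[Proposition~4.6]{LP}), and what you write out --- finiteness of the integrals, preservation of local finiteness via containment of supports, compatibility with the $\partial_0$-term through measure preservation of $\alpha(\sigma[0,1])$, identification of the composite $\Z \hookrightarrow \linfz\alpha \xrightarrow{I_\R} \R$ with the constant inclusion, and the norm estimate $|I_\R(c)|_1 \leq |c|_{1,\linfz\alpha}$ --- is exactly that adaptation to the locally finite setting.
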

\begin{proof}
  This can be proved in the same way as in the closed
  case~\cite[Remark~5.23]{Sthesis}\cite[Proposition~4.6]{LP}.
\end{proof}

Integrating both the coefficients and the simplices (over the volume form)
provides a useful criterion to detect fundamental cycles. Integration over
simplices requires some regularity on the simplices (we will use smoothness)
as well as global bounds to ensure convergence (we will use a global Lipschitz
bound). 

\begin{defn}[integration of a simplex]
  Let~$M$ be an oriented Riemannian $n$-manifold. For a smooth singular 
  $n$-simplex~$\sigma\colon \Delta^n \to M$, we define the
  \emph{integration} of~$\sigma$ over~$M$ to be
  \[\langle \dvol_M,\sigma \rangle \coloneqq \int_{\Delta^n} \sigma^* \dvol_M.\]
\end{defn}

\begin{defn}[Lipschitz chain]
  Let $M$~be a Riemannian manifold, let $\alpha$ be a standard
  $\Pi(M)$-space and let $c\in C_*^\lf(M, \linfz \alpha)$~be an
  $\alpha$-parametrised locally finite chain. Let $\Lip(c)\in [0,
    \infty]$ denote the supremum of the (possibly infinite) Lipschitz
  constants over the simplices in~$c$ (that occur with non-zero
  coefficient). We say that $c$~is \emph{Lipschitz} if $\Lip(c) <
  \infty$.
\end{defn}

An important class of Lipschitz simplices are geodesic simplices in
hyperbolic spaces:

\begin{defn}[geodesic simplex]
  Given two points $x$ and $y$ in $\mathbb{H}^n$, we define $[x,
    y]\colon [0,1] \to \Hyp^n$ to be the constant-speed
  parametrisation of the unique geodesic segment of $\mathbb{H}^n$
  joining $x$ to $y$. The standard $n$-simplex $\Delta^n$ is given by
  the following set: $\Delta^n = \{(x_0, \dots, x_n) \in \,
  \mathbb{R}^{n+1}_{\geq 0} \, | \, \sum_{i=0}^n x_i = 1 \}$. We
  identify $\Delta^{n-1}$ as the subset of $\Delta^n$ given by those
  points whose last coordinate is zero. A \emph{geodesic simplex}
  $\sigma \colon \Delta^n \rightarrow \mathbb{H}^n$ with vertices
  $x_0, \dots, x_n$, often denoted by $\straight(x_0, \cdots, x_n)$,
  is the map defined inductively as follows:
  $$
  \sigma\bigl((1-t) s + t(0, \dots, 0, 1)\bigr) \coloneqq [\sigma(s), x_n](t),
  $$
  where $s \in \Delta^{n-1}$ and $t \in [0,1]$. 

  We say that a singular chain in a hyperbolic $n$-manifold $M$ is
  \emph{geodesic} (or \emph{straight}) if each simplex with non-zero coefficient
  is the composition of a geodesic simplex with the
  universal covering projection $\mathbb{H}^n \rightarrow M$.
\end{defn}

\begin{rem}[geodesic simplices are Lipschitz]\label{rem:lipschitz:cost:diameter}
      All geodesic simplices~$\Delta^k \to \Hyp^n$ are smooth, by
      smoothness of the exponential map. Moreover, geodesic simplices
      are Lipschitz maps, with Lipschitz constant depending only on
      their diameter~\cite[Proposition~2.4 and Remark~2.5]{Loh-Sauer}.
      This shows that chains supported on geodesic simplices of
      uniformly bounded diameter are Lipschitz.
\end{rem}

\begin{rem}[geodesic straightening]\label{rem:efficient:geode:cycle}
  Every singular cycle in a hyperbolic manifold $M$ is
  canonically homologous to a geodesic one with (at most) the same
  $\ell^1$-norm~\cite[Lemma~C.4.3]{BePe}.
\end{rem}

\begin{defn}[double integration]
  Let $M$ be an oriented Riemannian $n$-manifold without boundary, let
  $\alpha$ be a standard $\Pi(M)$-space, and let $c = \sum_{\sigma \in
    \map(\Delta^n,M)} f_\sigma \cdot \sigma \in C_n^\lf(M; \linfz
  \alpha)$ be an $\alpha$-parametrised locally finite chain that is
  Lipschitz, supported on smooth simplices, and such that~$|c|_1<
  \infty$.  We define the \emph{double integration} of~$c$ over~$M$ by
  \[
  \langle \dvol_M, c\rangle
  \coloneqq \bigl\langle \dvol_M, I_\R(c) \bigr\rangle
  =
  \sum_{\sigma \in \map(\Delta^n,M)}
  \biggl(\int_{\alpha(\sigma[0])} f_\sigma\biggr)
  \cdot
  \langle \dvol_M,\sigma\rangle.
  \]
  The hypotheses on~$c$ ensure that the sum on the right-hand side converges
  absolutely.
\end{defn}

We will now come to the recognition of fundamental cycles through integration;
for simplicity, we restrict to the case of tame manifolds and ergodic parameter
spaces.

\begin{defn}[tame manifold]
  Let $M$ be a connected non-compact manifold without boundary. We say
  that $M$ is \emph{tame} if it is homeomorphic to the interior of a
  connected compact manifold with boundary.
\end{defn}

In order to prove a characterisation of $\alpha$-parametrised locally
finite fundamental cycles of a tame $n$-manifold $M$ in terms of the
double integration map, we need to compute the top-dimensional  
homology~$H_n^\lf(M; \linfz \alpha)$.

\begin{prop}[top locally finite homology of tame manifolds with local coefficients]\label{prop:top:dim:homology}
  Let $M$ be a tame oriented connected $n$-manifold and let $\alpha$
  be an ergodic standard $\Pi(M)$-space.  Then the map~$H_n^\lf(M;\Z)
  \longrightarrow H_n^\lf(M;\linfz \alpha)$ induced by the inclusion
  of constant functions is an isomorphism, and so
   \[
  H_n^\lf\left(M; \linfz \alpha\right) \cong \mathbb{Z},
  \]
  generated by the $\alpha$-parametrised locally finite fundamental class~$[M]_\lf^\alpha$.
\end{prop}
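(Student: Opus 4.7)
The strategy is to combine Poincaré--Lefschetz duality in the presence of local coefficients with the ergodicity of $\alpha$. Duality will reduce the computation of $H_n^\lf(M;L)$ to that of $H^0(M;L)$ for any normed local coefficient system $L$, and ergodicity will then pin down the answer in the case $L = \linfz \alpha$. Tameness enters to ensure we are in a setting where the classical duality applies cleanly.

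\textbf{Execution.} For any normed local coefficient system $L$ on $M$, cap product with the integral locally finite fundamental class induces a natural isomorphism $H^0(M;L) \cong H_n^\lf(M;L)$, valid for oriented $n$-manifolds without boundary. Concretely, fixing a homeomorphism $M \cong \inte(W)$ with $W$ a compact oriented connected $n$-manifold with boundary, excision from a collar of $\partial W$ gives $H_n^\lf(M;L) \cong H_n(W,\partial W;L)$, and the cap-product duality for the compact pair identifies this with $H^0(W;L)$. Since $W$ is path-connected, $H^0(W;L) \cong L(x_0)^{\pi_1(W,x_0)}$, the $\pi_1$-invariants of the fibre at any basepoint $x_0$. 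For $L = \Z$ this is $\Z$, matching the standard computation $H_n^\lf(M;\Z) \cong \Z$. For $L = \linfz \alpha$, Remark~\ref{rem:groupoidvsgroup} identifies the fibre at $x_0$ with $L^\infty(\alpha(x_0);\Z)$, on which $\pi_1(W,x_0)$ acts by pre-composition via $\alpha$; an element $f$ is $\pi_1$-invariant precisely when $f \circ \alpha(\gamma) = f$ for every $\gamma$, i.e., when $f$ is invariant under the action on $\alpha(x_0)$. Ergodicity of the $\pi_1(W,x_0)$-space $\alpha(x_0)$ means exactly that the only essentially invariant $\Z$-valued functions are essentially constant, so $L^\infty(\alpha(x_0);\Z)^{\pi_1(W,x_0)} \cong \Z$, generated by the constant function~$1$.

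\textbf{Naturality and conclusion.} Naturality of the above chain of isomorphisms in $L$ identifies the change-of-coefficients map $H_n^\lf(M;\Z) \to H_n^\lf(M;\linfz \alpha)$ with the inclusion of constants $\Z = \Z^{\pi_1(W,x_0)} \hookrightarrow L^\infty(\alpha(x_0);\Z)^{\pi_1(W,x_0)} = \Z$, which sends $1 \mapsto 1$ and is therefore the identity on $\Z$. Consequently the change-of-coefficients map is itself an isomorphism, so $H_n^\lf(M;\linfz \alpha) \cong \Z$ and, by definition, $[M]_\lf^\alpha$ is the image of $[M]_\lf$, hence a generator.

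\textbf{Main obstacle.} The one delicate point is justifying Poincaré--Lefschetz duality with coefficients in a normed local coefficient system as large as $\linfz \alpha$, together with its naturality under the inclusion of constants. The usual cap-product formulations of duality for oriented manifolds require no finite-generation hypothesis on the coefficient system, so this should go through, but one does need to verify that the construction is compatible with the normed local coefficient formalism used in this paper and that the excision step from $H_n^\lf(\inte W;L)$ to $H_n(W,\partial W;L)$ extends to this generality. Once this is granted, the remainder of the argument is essentially a tautology, driven entirely by ergodicity.
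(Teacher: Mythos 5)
Your overall route is the same as the paper's: identify $H_n^\lf(M;L)$ with $H_n(W,\partial W;L)$ for the compact manifold $W$ with $M\cong W^\circ$, apply Lefschetz duality for the compact pair to pass to $H^0(W;L)\cong L(x_0)^{\pi_1(W,x_0)}$, use ergodicity to see that the invariants of $\linfz{\alpha(x_0)}$ are the constants, and invoke naturality to identify the change-of-coefficients map with the inclusion of constants. Those last steps match the paper and are fine.

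The genuine gap is the very first isomorphism, $H_n^\lf(M;L)\cong H_n(W,\partial W;L)$, which you dispatch with ``excision from a collar''. Excision (and homotopy invariance) do not apply to locally finite homology in this naive way: the collapse map $M\to W$ is not proper, so it does not even induce a map on locally finite chains, and a locally finite cycle may have infinitely many simplices escaping into the end $\partial W\times[0,\infty)$. This is exactly where the bulk of the paper's proof lives: one writes $C_n^\lf(M;\linfz\alpha)$ as the inverse limit of the complexes $C_n^\lf(M,M\setminus K_r;\linfz\alpha)$ over the exhaustion $K_r=W\cup_{\partial W}(\partial W\times[0,r])$, checks the Mittag--Leffler condition, and uses the vanishing $H_{n+1}^\lf(M,M\setminus K_r;\linfz\alpha)\cong H_{n+1}(W,\partial W;\linfz\alpha)\cong 0$ to kill the $\lim^1$ term, before identifying the resulting limit of relative groups with $H_n(W,\partial W;\linfz\alpha)$. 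Note that this argument genuinely uses the top degree (the $\lim^1$ obstruction is controlled by homology in degree $n+1$), so the identification is not a formal consequence of duality or excision valid in all degrees. Your ``Main obstacle'' paragraph locates the difficulty in the wrong place -- duality with coefficients in $\linfz\alpha$ for the compact pair is indeed routine -- while the step that actually needs an argument, the passage from locally finite homology of $M$ to relative homology of $(W,\partial W)$ with these coefficients, is left unproved; as written, your chain of isomorphisms does not get off the ground without it.
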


\begin{proof}
  Let $W$ be the closure of~$M$.  The topological collar
  theorem~\cite{brownflat} shows that~$M \cong W \cup_{\partial W}
  (\partial W \times [0, +\infty))$. Via this identification, we may
    consider the compact subspaces~$K_r \coloneqq W \cup_{\partial W}
    (\partial W \times [0,r])$ of~$M$; clearly, the family~$(K_r)_{r
      \in \N}$ is cofinal in the directed set of all compact subspaces
    of~$M$. Therefore, we can write the locally finite chain complex
    as the inverse limit
  \[
  C_n^\lf(M; \linfz \alpha)
  \cong \varprojlim_{r \to\infty} C_n^\lf\bigl(M, M \setminus K_r; \linfz \alpha\bigr). 
  \]
  
  It is easy to check that the directed system $(C_n^\lf(M, M
  \setminus K_r; \linfz \alpha)_{r\in \N}$ satisfies the
  Mittag-Leffler condition~\cite[Definition~3.5.6]{Weibel}. Moreover,
  for each~$r\in\N$ we have
  \begin{align*}
    H_{n+1}^\lf\bigl(M, M \setminus K_r; \linfz \alpha\bigr)
    & \cong H_{n+1}\bigl(K_r, \partial K_r;\linfz \alpha\bigr) \\
    & \cong H_{n+1}\bigl(W,\partial W; \linfz \alpha\bigr) \cong 0.
  \end{align*}
  Thus, the $\lim^1$-term in the short exact sequence computing the
  homology of a limit of chain complexes~\cite[Theorem~3.5.8]{Weibel}
  vanishes, and we see that the inclusions $(M, \emptyset)
  \hookrightarrow (M, M\backslash K_r)$ induce an isomorphism
  $$
  H_n^\lf(M; \linfz \alpha) \cong
  \varprojlim_{r \to\infty} H_n^\lf\bigl(M, M \setminus K_r; \linfz \alpha\bigr).
  $$
  
  Since for~$r\in \N$ the pairs~$(M, M \setminus K_r)$ are homotopy
  equivalent to~$(W, \partial W)$ in a compatible way (namely, by
  collapsing~$M\backslash W$ onto~$\partial W$), the inverse
  system~$(H_n^\lf(M,M\setminus K_r;\linfz \alpha))_{r \in \N}$ is
  isomorphic to the constant system~$H_n(W,\partial W;\linfz \alpha)$,
  and we have an isomorphism
  $$
  H_n^\lf\bigl(M; \linfz \alpha\bigr)
  \cong H_n\bigl(W, \partial W; \linfz \alpha\bigr)
  $$
  induced by the collapse map~$M\to W$.
  
  Repeating the argument in the setting of constant $\Z$-coefficients
  yields a similar isomorphism, reducing our claim to the proof that
  the lower map in the following commutative diagram is an
  isomorphism:
  
  \[\begin{tikzcd}
  	 H_n^\lf(M; \Z) \arrow[r] \arrow[d, "\cong"]& H_n^\lf\bigl(M; \linfz \alpha\bigr) \arrow[d, "\cong"]\\
  	H_n(W, \partial W; \Z)\arrow[r] & H_n\bigl(W, \partial W; \linfz \alpha\bigr)
  \end{tikzcd}.\]
  
  Using Poincaré duality (with local coefficients), this can be
  further translated into a question about $0$-th cohomology:
 
  \[\begin{tikzcd}
 H_n(W, \partial W; \Z)\arrow[r] & H_n\bigl(W, \partial W; \linfz \alpha\bigr)\\
 H^0(W;\Z) \arrow[u, "\cong", "\mathrm{PD}"'] \arrow[r]& H^0\bigl(W ; \linfz \alpha\bigr) \arrow[u, "\cong", "\mathrm{PD}"'] 
 \end{tikzcd}.\]
 
  Because $\alpha$ is ergodic, we know that for every~$x_0 \in W$ the
  fixed module~$\linfz{\alpha(x_0)}^{\pi_1(W,x_0)}$ consists only of
  the constant functions.  In other words, the inclusion of~$\Z$
  into~$\linfz{\alpha(x_0)}^{\pi_1(W,x_0)}$ as constant functions
  induces an isomorphism on $0$-th twisted cohomology, which
  translates to the lower map in the previous diagram being an
  isomorphism.
\end{proof}

Finally, we can prove our criterion for $\alpha$-parametrised locally
finite fundamental cycles of tame manifolds.

\begin{prop}[recognising fundamental cycles through integration]\label{prop:double:integration}
  Let $M$ be a tame Riemannian $n$-manifold and let $\alpha$ be an
  ergodic standard $\Pi(M)$-space. Let $c \in C_n^\lf(M; \linfz
  \alpha)$ be a smooth Lipschitz $\alpha$-parametrised locally finite
  cycle with $|c |_1 < +\infty$. Then, the following are equivalent:
\begin{enumerate}
\item The chain $c$ is an $\alpha$-parametrised locally finite fundamental cycle;
\item $\langle \dvol_M, c \rangle = \vol(M)$;
\item $\bigl|\langle \dvol_M, c \rangle - \vol(M)\bigr| < \vol(M)$.
\end{enumerate}
\end{prop}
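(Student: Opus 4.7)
The plan is to prove the three implications cyclically, $(1) \Rightarrow (2) \Rightarrow (3) \Rightarrow (1)$. The middle implication $(2) \Rightarrow (3)$ is trivial, since $\vol(M) > 0$.

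For $(1) \Rightarrow (2)$, I would reduce to real coefficients via the integration-of-coefficients chain map (Proposition~\ref{prop:integration:coeff:inequality:ord:foliated}): if $c$ is an $\alpha$-parametrised locally finite fundamental cycle, then $I_\R(c) \in C_n^\lf(M;\R)$ is a smooth Lipschitz real locally finite fundamental cycle with $|I_\R(c)|_1 < \infty$, and by construction $\langle \dvol_M, c\rangle = \langle \dvol_M, I_\R(c)\rangle$. The classical statement that the volume form pairs with any such real representative of $[M]_\lf$ to give $\vol(M)$ then completes the step; it follows from Stokes' theorem applied on a compact exhaustion $(K_r)_{r \in \N}$ of~$M$, with tails and boundary terms near $\partial K_r$ controlled by $|I_\R(c)|_1 < \infty$ and the uniform Lipschitz bound.

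For $(3) \Rightarrow (1)$, Proposition~\ref{prop:top:dim:homology} identifies $H_n^\lf(M; \linfz \alpha) \cong \Z$ with generator $[M]_\lf^\alpha$, so that $[c] = k \cdot [M]_\lf^\alpha$ for a unique $k \in \Z$. The crux is to verify
\[
\langle \dvol_M, c\rangle = k \cdot \vol(M),
\]
since then $(3)$ reads $|k-1| \cdot \vol(M) < \vol(M)$, forcing $|k-1| < 1$ and hence $k = 1$ by integrality, so that $c$ is an $\alpha$-parametrised locally finite fundamental cycle.

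To establish the intermediate formula, I would fix a concrete smooth Lipschitz $\alpha$-parametrised locally finite fundamental cycle~$z$ with $|z|_1 < \infty$ (for instance, obtained from a smooth triangulation of the compact core of $M$ continued across a collar, with constant integer coefficients), invoke $(1) \Rightarrow (2)$ to get $\langle \dvol_M, z\rangle = \vol(M)$, and reduce the problem to showing $\langle \dvol_M, c - k z\rangle = 0$. This is the main obstacle and amounts to a Stokes-type homology-invariance statement for double integration on the subcomplex of smooth Lipschitz locally finite chains of finite $\ell^1$-norm. The per-simplex identity $\sum_{i=0}^{n+1}(-1)^i\langle \dvol_M,\partial_i\tau\rangle = 0$ — a consequence of Stokes' theorem on $\Delta^{n+1}$ applied to the closed top-dimensional form $\tau^*\dvol_M$ — together with the absolute convergence of the defining sums (ensured by the Lipschitz and $\ell^1$-finiteness hypotheses) yields the desired vanishing, once a suitably regular bounding chain for $c - k z$ is produced by a compact-exhaustion approximation.
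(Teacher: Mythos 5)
Your overall skeleton is the same as the paper's: use Proposition~\ref{prop:top:dim:homology} to write $[c] = k \cdot [M]^\alpha_\lf$ with $k \in \Z$, establish the formula $\langle \dvol_M, c\rangle = k \cdot \vol(M)$, and then read off the equivalences. The genuine gap is in your justification of that formula. The paper does not prove it either, but it cites the (proof of) Proposition~4.4 of L\"oh--Sauer, where the argument pairs the cycle with compactly supported truncations $f_r \cdot \dvol_M$ of the volume form, uses the duality between compactly supported cohomology and locally finite homology for the homology invariance, and passes to the limit using $|c|_1 < \infty$ together with the uniform per-simplex volume bound coming from the Lipschitz hypothesis. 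Your route instead asks for ``a suitably regular bounding chain for $c - kz$'' so that the per-simplex Stokes identity and an absolutely convergent rearrangement apply. That is precisely the step that does not come for free: the bounding chain supplied by the vanishing of $[c - kz]$ in $H_n^\lf(M;\linfz\alpha)$ is an arbitrary locally finite singular chain --- not smooth, not Lipschitz, and with no $\ell^1$-bound --- and there is no evident ``compact-exhaustion approximation'' producing a smooth Lipschitz bounding chain of finite norm (the comparison between Lipschitz locally finite homology and ordinary locally finite homology is exactly the subtlety that the cited argument is designed to avoid). As stated, the crux of $(3)\Rightarrow(1)$, and likewise the ``classical statement'' invoked in $(1)\Rightarrow(2)$, is asserted rather than proved.

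There is also a secondary problem with your reduction to a reference cycle $z$: a fundamental cycle obtained from a smooth triangulation of the compact core extended across the collar, with constant integer coefficients, has infinitely many simplices of coefficient $\pm 1$ and hence $|z|_1 = \infty$, so it does not satisfy the hypotheses under which you want to apply $(1)\Rightarrow(2)$ to it. More generally, a smooth Lipschitz $\alpha$-parametrised locally finite fundamental cycle of finite $\ell^1$-norm need not exist for an arbitrary tame Riemannian manifold, so the intermediate formula should not be made to depend on choosing one; working directly with $I_\R(c)$ and the compactly supported pairing, as in the cited result, avoids both issues.
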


\begin{proof}
	Because $I_\R$ is a norm non-increasing chain map, the
	$\ell^1$-norm of~$I_\R(c)$ is finite. Let $k \in \R$ be the
	number that satisfies
	\[ \bigl[ I_\R(c) \bigr] = k \cdot [M]^\R_\lf, 
	\]
	where $[M]^\R_\lf \in H_n^\lf(M;\R)$~is the real fundamental class of~$M$.
	Then~\cite[(proof of) Proposition~4.4]{Loh-Sauer}
	\[ \langle \dvol_M, c \rangle
	= \langle \dvol_M, I_\R(c) \rangle
	= k \cdot \vol(M).
	\]
	Moreover, by Proposition~\ref{prop:top:dim:homology}, the canonical
	homomorphism~$H_n^\lf(M;\Z) \longrightarrow H_n^\lf(M;\linfz
        \alpha)$ is an isomorphism and both modules are isomorphic to~$\Z$,
	and the canonical homomorphism~$H_n^\lf(M;\Z) \longrightarrow H_n^\lf(M;\R)$
	corresponds to the inclusion~$\Z \longrightarrow \R$.
	Therefore, we obtain that $k$ is an integer and that 
	\[ [c] = k \cdot [M]^\alpha_\lf \in H_n^\lf(M;\linfz \alpha).
	\]
	Now the equivalence of the three statements easily follows.
\end{proof}

\subsection{Gaining boundary control}\label{subsec:gaincontrol}

One benefit of studying the integral foliated locally finite
simplicial volume of tame manifolds is that it gives an upper bound on
relative integral foliated simplicial volume of their closure,
including boundary control. This fact will play a crucial role in
deriving the main theorem (Theorem~\ref{thm:main}) from the
finite-volume hyperbolic case.

\begin{prop}\label{prop:boundary:control:inequality}
  Let $W$ be an oriented compact connected $n$-manifold, let $M \coloneqq W^\circ$,
  and let $\alpha$ be a standard $\Pi(M)$-space. Then
  \[ \ifsv {W,\partial W}^\alpha_\partial \leq \ifsv M ^\alpha_\lf.
  \]
\end{prop}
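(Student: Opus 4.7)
The plan is to extract from any near-optimal $\alpha$-parametrised locally finite fundamental cycle of~$M$ a relative $\alpha$-parametrised fundamental cycle of~$(W,\partial W)$ of essentially the same $\ell^1$-norm, but with controllably small boundary. Fix $\varepsilon>0$ and $\eta>0$ and pick a locally finite $\alpha$-parametrised fundamental cycle $c\in C_n^\lf(M;\linfz\alpha)$ with $|c|_1<\ifsv M^\alpha_\lf+\eta$. By the topological collar theorem, $M\cong W\cup_{\partial W}(\partial W\times[0,\infty))$; for each $r\ge 0$ set $K_r\coloneqq W\cup_{\partial W}(\partial W\times[0,r])$ and rescale the collar to obtain a homeomorphism of pairs $h_r\colon(K_r,\partial K_r)\to(W,\partial W)$.

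Next, I would truncate $c$ at~$K_r$. Since $|c|_1<\infty$ and each singular simplex has compact image, writing $c=c^{\mathrm{in}}+c^{\mathrm{out}}$ with $c^{\mathrm{in}}$ the part of~$c$ supported in~$K_r$, one has $|c^{\mathrm{out}}|_1\to 0$ as $r\to\infty$. Choose $r$ large enough that $|c^{\mathrm{out}}|_1<\varepsilon/(n+1)$. Since $\partial c=0$, we have $\partial c^{\mathrm{in}}=-\partial c^{\mathrm{out}}$, and hence $|\partial c^{\mathrm{in}}|_1\le(n+1)\,|c^{\mathrm{out}}|_1<\varepsilon$.

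Third, push forward along~$h_r$ to obtain a chain $h_{r,*}c^{\mathrm{in}}\in C_n(W;\linfz\alpha)$. Here I would use that the inclusion $M\hookrightarrow W$ is a homotopy equivalence, so the induced equivalence of fundamental groupoids canonically identifies~$\alpha$ with a $\Pi(W)$-space; under this identification, the restriction of~$\alpha$ to~$\Pi(K_r)$, transported along~$h_r$, is canonically isomorphic to~$\alpha$ viewed as a $\Pi(W)$-space. Pushforward preserves the $\ell^1$-norm on each simplex, so $|h_{r,*}c^{\mathrm{in}}|_1\le|c|_1<\ifsv M^\alpha_\lf+\eta$ and $|\partial h_{r,*}c^{\mathrm{in}}|_1<\varepsilon$. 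That $h_{r,*}c^{\mathrm{in}}$ actually represents the relative $\alpha$-fundamental class of~$(W,\partial W)$ follows from the standard integer-coefficient identification
\[
H_n^\lf(M;\Z)\xrightarrow{\,\text{collapse}\,}H_n^\lf\bigl(M,M\setminus K_r^\circ;\Z\bigr)\xleftarrow{\;\cong\;}H_n(K_r,\partial K_r;\Z)\xrightarrow{(h_r)_*}H_n(W,\partial W;\Z),
\]
which sends $[M]_\lf$ to $\pm[W,\partial W]$ and is natural in the coefficient inclusion $\Z\hookrightarrow\linfz\alpha$. Passing to the infimum over such cycles, then the supremum over~$\varepsilon$, and letting $\eta\to 0$ gives $\ifsv{W,\partial W}^\alpha_\partial\le\ifsv M^\alpha_\lf$.

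The main obstacle is the bookkeeping in the third step: carefully identifying the $\Pi(M)$-space~$\alpha$ with a $\Pi(W)$-space via the homotopy equivalence $M\hookrightarrow W$, and verifying that~$h_{r,*}c^{\mathrm{in}}$ genuinely realises the relative fundamental class with $\linfz\alpha$-coefficients. Once the integer-coefficient collapse identification and the naturality of change of coefficients are in place, the remaining estimates on $|h_{r,*}c^{\mathrm{in}}|_1$ and $|\partial h_{r,*}c^{\mathrm{in}}|_1$ reduce to elementary applications of the triangle inequality on the truncation.
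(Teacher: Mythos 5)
Your overall strategy (truncate a near-optimal locally finite cycle at a compact set $K_r$ of the collar exhaustion, estimate the boundary by the discarded mass, and push into $(W,\partial W)$) is the same as the paper's, but the way you pass from the truncation to a chain on $(W,\partial W)$ has a genuine gap. You take $c^{\mathrm{in}}$ to be the simplices of~$c$ contained in~$K_r$ and push forward along a homeomorphism of pairs $h_r\colon (K_r,\partial K_r)\to (W,\partial W)$. The problem is that $\partial c^{\mathrm{in}}=-\partial c^{\mathrm{out}}$ is in general \emph{not} supported in~$\partial K_r$: a simplex of~$c^{\mathrm{out}}$ merely fails to be contained in~$K_r$, and its faces can lie deep inside~$K_r^\circ$ (even inside~$W$), so after applying~$h_r$ the boundary of your chain need not lie in~$\partial W$. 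Hence $h_{r,*}c^{\mathrm{in}}$ is not a relative cycle at all, so it is not an admissible competitor in the definition of $\ifsv{W,\partial W}^\alpha_\partial$ (which requires a relative $\alpha$-fundamental cycle, i.e.\ $\partial c'$ supported in~$\partial W$, with $|\partial c'|_1\leq\varepsilon$); for the same reason your homological identification via $H_n^\lf(M,M\setminus K_r^\circ;\Z)$ cannot be applied to $c^{\mathrm{in}}$, since its image in the relative complex is not a cycle and $c^{\mathrm{out}}$ is not supported in $M\setminus K_r^\circ$. (If instead one reads ``supported in $K_r$'' as ``intersecting $K_r$'', then $h_{r,*}$ is not even defined on those simplices, so the gap simply moves.)

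The paper avoids this by combining the \emph{other} truncation with a globally defined map: $c_R$ keeps the (finitely many, by local finiteness) simplices that \emph{intersect}~$K_R$, so the discarded simplices lie entirely in the open collar $\partial W\times(R,\infty)$, and one pushes $c_R$ forward along the collapse map $M\to W$ that is the identity on~$W$ and projects the collar onto~$\partial W$. Then $\partial c_R$, being supported in the open collar, is mapped into~$\partial W$, so the resulting chain is genuinely a relative cycle, with the same norm estimates you derive. Finally, that it represents $[W,\partial W]^\alpha$ is verified in the paper not by a purely homological diagram applied to the truncation, but by running the same restriction-and-projection construction in parallel on an integral locally finite fundamental cycle and a bounding chain (following L\"oh's thesis), showing the two outputs differ by a boundary. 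Your estimates in the second step are fine; it is the third step that needs to be replaced along these lines.
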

\begin{proof}
  If $\ifsv M ^\alpha_\lf=\infty$, there is nothing to show.
 
  Otherwise, we prove that for every $\alpha$-parametrised locally
  finite fundamental cycle~$c$ of~$M$ with~$|c|_1 < \infty$, and for every
  $\varepsilon >0$, there exists an $\alpha$-parametrised
  relative fundamental cycle $c'(\varepsilon)$ of $W$ such that
  $$
  \bigl| c'(\varepsilon) \bigr|_1 \leq \lvert c \rvert_1
  \qand
  \bigl| \partial c'(\varepsilon) \bigr|_1 \leq \varepsilon.
  $$

  The proof follows the one for ordinary simplicial
  volume~\cite[Proposition~5.12]{Lothesis}.  Since $M$ is the interior
  of $W$, we can identify $M \cong W \cup_{\partial W} (\partial W
  \times [0, +\infty))$ as in the proof of Proposition~\ref{prop:top:dim:homology}.
  Then, we can consider the exhaustion of~$M$ by compact sets given by
  $K_r \coloneqq W \cup_{\partial W} (\partial W \times [0, r])$ for all $r
  \in \, \mathbb{N}$. Let us write $c_r \in \, C_n(M; \linfz
  \alpha)$ for the (finite) chain obtained from~$c$ by setting to~$0$
  the coefficients of the simplices that do not intersect~$K_r$.
  By construction, we have a
  monotone increasing sequence $(|c_r|_1)_{r \in \, \mathbb{N}}$
  such that
  $$
  \lim_{r \rightarrow +\infty} |c_r|_1 = |c|_1.
  $$
  Therefore, since $|c|_1 < \infty$, it follows that $\lim_{r \rightarrow +\infty} |c|_1- |c_r|_1 = 0$,
  and so for each $\varepsilon \in \R_{>0}$,
  taking~$R \in \mathbb{N}$ sufficiently large yields
  $$|c|_1 - |c_R|_1 \leq \frac{\varepsilon}{n+1}.$$
  Since we have $|c|_1 - |c_R|_1 = |c - c_R|_1$, we get the following
  estimate on the $\ell^1$-norm of the boundary of $c_R$:
  $$
  |\partial c_R|_1 = \bigl|\partial (c - c_R)\bigr|_1 \leq (n+1) \cdot |c - c_R|_1 \leq (n+1) \cdot \frac{\varepsilon}{n +1} = \varepsilon.
  $$

  We now claim that the desired $\alpha$-parametrised relative
  fundamental cycle~$c'(\varepsilon)$ of~$W$ can be obtained by taking
  the pushforward of~$c_R$ along the collapse map $M \rightarrow W$.
  Indeed, this new $\alpha$-parametrised chain~$c'(\varepsilon) \in
  C_n(W; \linfz \alpha)$ satisfies
  \[ \bigl| c'(\varepsilon) \bigr|_1 \leq |c_R|_1
     \qand
     \bigl|\partial c'(\varepsilon)\bigr|_1 \leq |\partial c_R|_1.
  \]
  Performing the construction of restriction and projection
  to a locally finite \emph{integral} fundamental cycle~$z$ of~$M$
  and to a corresponding boundary~$\partial b$ to~$c$ for large enough~$r$ 
  leads to a relative fundamental cycle~$z'$ of~$W$ \cite[Theorem~5.4 and
    Proposition~5.12]{Lothesis} that differs by a boundary~$\partial b'$
  from~$c'(\varepsilon)$. Hence, $c'(\varepsilon)$ is a relative
  $\alpha$-parametrised fundamental cycle of~$W$.~This finishes the proof.
\end{proof}

\section{A proportionality principle\\
  for finite-volume hyperbolic $3$-manifolds}\label{sec:hyplf}

In this section, we will show that the locally finite integral
foliated simplicial volume satisfies a proportionality principle for
complete finite-volume hyperbolic $3$-manifolds.

\begin{repthm}{thm:hyp3}
  Let $M$ be an oriented complete connected finite-volume hyperbolic
  $3$-manifold (without boundary). Then
  \[ \ifsvlf M 
   = \frac{\vol(M)}{v_3},
  \]
  where $v_3$ is the volume of any regular ideal tetrahedron in
  hyperbolic $3$-space.
\end{repthm}

As a corollary of this result, we will be able to compute the precise
value of the $\widehat{\pi_1(M)}$-parametrised integral simplicial
volume of all compact hyperbolic $3$-manifolds $M$ with toroidal
boundary (Section~\ref{sec:hyprel}).

The main step of the proof of the proportionality
Theorem~\ref{thm:hyp3} consists in the following result, whose proof
is discussed in detail in
Sections~\ref{subsec:smearing}--\ref{sec:proof:prop:princ}.

\begin{thm}\label{thm:pp}
  Let $n \in \N$, let $N$ be an oriented closed connected hyperbolic
  $n$-manifold and let $M$ be an oriented complete connected
  hyperbolic $n$-manifold of finite volume. Then
  \[    \frac{\isv N}{\vol(N)}
   \geq \frac{\ifsvlf M}{\vol(M)}. 
  \]
\end{thm}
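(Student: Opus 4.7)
The plan is to adapt Thurston's classical smearing argument to the integral foliated setting. Starting from an almost-minimal integer fundamental cycle of the closed hyperbolic manifold~$N$, one smears it over~$M$ via the isometry group $G = \Isom^+(\Hyp^n)$, producing a locally finite $\alpha$-parametrised fundamental cycle of~$M$ whose parameter space~$\alpha$ is modeled on the finite-measure homogeneous space $G/\Gamma_N$. Integer-valuedness of the coefficients is achieved by using indicator functions rather than scalar weights, and the volume-ratio factor $\vol(M)/\vol(N)$ enters naturally through the Haar-measure bookkeeping.

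Concretely, I would first fix $\varepsilon > 0$ and choose an integer fundamental cycle $c_N = \sum_j a_j \sigma_j$ of~$N$ with $|c_N|_1 \le \isv N + \varepsilon$. Geodesic straightening (Remark~\ref{rem:efficient:geode:cycle}) preserves the integer structure and does not increase the $\ell^1$-norm, so we may assume each~$\sigma_j$ is geodesic, hence Lipschitz of uniformly bounded constant (Remark~\ref{rem:lipschitz:cost:diameter}), and we fix lifts $\tilde\sigma_j\colon\Delta^n\to\Hyp^n$. Viewing $\Gamma_M = \pi_1(M,x_0)$ and $\Gamma_N = \pi_1(N)$ as lattices in~$G$, normalise Haar measure so that $G/\Gamma_N$ is a standard Borel probability space, equip it with the left $\Gamma_M$-action by translation, and extend to a standard $\Pi(M)$-space~$\alpha$ via Remark~\ref{rem:groupoidvsgroup}. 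The Howe--Moore theorem applied to the simple Lie group~$G$ and the lattice~$\Gamma_M$ yields ergodicity of~$\alpha$, so that Proposition~\ref{prop:double:integration} will apply. Next, pick a Borel fundamental domain $D \subset G$ for the left $\Gamma_M$-action, and define the smeared chain
\[
\mathrm{smear}(c_N) \coloneqq \sum_\tau \Bigl(\sum_j a_j \cdot \mathbf{1}_{E_j(\tau)}\Bigr) \cdot \tau,
\]
where $\tau$ ranges over geodesic simplices in~$M$ and $E_j(\tau) \subset G/\Gamma_N$ is the image under $G \to G/\Gamma_N$ of $\{g \in D : \pi_M(g\tilde\sigma_j) = \tau\}$. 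This chain is manifestly integer-valued in~$L^\infty$, and the uniform diameter bound on the~$\sigma_j$ makes it locally finite.

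Verification then proceeds via the double-integration criterion: a Fubini computation based on $G$-invariance of $\dvol_{\Hyp^n}$ and the equality $\langle \dvol_N, c_N \rangle = \vol(N)$ shows, with the correct Haar normalisation, that $\langle \dvol_M, \mathrm{smear}(c_N) \rangle = \vol(M)$, identifying $\mathrm{smear}(c_N)$ as an $\alpha$-parametrised locally finite fundamental cycle of~$M$ via Proposition~\ref{prop:double:integration}. An analogous calculation bounds $|\mathrm{smear}(c_N)|_1 \le \vol(M)/\vol(N) \cdot |c_N|_1$, and letting $\varepsilon \to 0$ delivers the claimed inequality. The main obstacle, I expect, will be reconciling the two equivariance structures at play: the geometric smearing $g \mapsto \pi_M(g\tilde\sigma_j)$ is invariant under the \emph{left} $\Gamma_M$-action on~$G$, whereas the parameter space $G/\Gamma_N$ carries a left $\Gamma_M$-action together with a trivialised right $\Gamma_N$-ambiguity. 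Bridging these via the fundamental domain~$D$ (which is itself not $\Gamma_M$-invariant) and checking that the resulting foliated chain defines a genuine $\Gamma_M$-equivariant element of $C_n^\lf(M;\linfz\alpha)$ is the crux of the argument; a secondary task is the careful Haar-normalisation bookkeeping that makes the Fubini constants come out exactly to $\vol(M)/\vol(N)$.
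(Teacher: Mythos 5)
Your high-level strategy coincides with the paper's (smearing an integral fundamental cycle of~$N$ over~$M$ via $G=\Isom^+(\Hyp^n)$, parameter space $G/\pi_1(N)$ with the left $\pi_1(M)$-translation action, ergodicity by Moore's theorem, recognition of the fundamental cycle by double integration, and the Haar bookkeeping giving the factor $\vol(M)/\vol(N)$), but the chain you actually write down vanishes, and the idea that prevents this -- discretisation -- is exactly the core of the paper's argument and is absent from your sketch. For a \emph{fixed} geodesic simplex~$\tau$ in~$M$, the set $\{g\in D : \pi_M(g\tilde\sigma_j)=\tau\}$ is at most countable: $g\tilde\sigma_j$ must equal one of the countably many lifts of~$\tau$, and for each lift the set of such~$g$ is a coset of the (at most lower-dimensional) stabiliser of~$\tilde\sigma_j$, hence Haar-null. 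So every $E_j(\tau)$ is a null set of $G/\pi_1(N)$, each $\mathbf{1}_{E_j(\tau)}$ is zero in $\linfz{G/\pi_1(N)}$, and $\mathrm{smear}(c_N)$ is the zero element of $C_n^\lf(M;\linfz{\alpha})$; its double integral is~$0$, not $\vol(M)$, and no Fubini argument can repair this. (If instead you insisted on keeping the uncountably many simplices $\pi_M(g\tilde\sigma_j)$, $g\in D$, with honest nonzero weights, the result would not be a locally finite chain at all.) Classical smearing survives because the coefficient attached to a simplex is an \emph{integral} against Haar measure; integer-valued $L^\infty$-coefficients force positive-measure sets of parameters to produce literally the same singular simplex, and that is what your construction does not arrange.

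The paper arranges it through the decorated mesh and the $\snap$ map of Setup~\ref{setup:smearing}: one fixes a locally finite mesh of~$M$ whose size is smaller than the injectivity radius of~$N$ (this is what makes the counting functions $\{0,1\}$-valued), further refined so that moving the vertices of the relevant geodesic simplices within a mesh cell changes their volume by less than $\vol(N)/(2\,|c|_1)$; the smeared chain is then supported on the \emph{countable} family of grid-geodesic simplices, with coefficients $f_{\sigma,\tau}(x\Lambda)=\#\{\lambda\in\pi_1(N)\mid\snap(x\lambda\tilde\sigma)=\tau\}$, which are indicator functions of genuine positive-measure Borel sets. Local finiteness, Borel measurability and the chain-map property then need the arguments of Section~\ref{sec:smearing:map:def}, and -- because snapping perturbs volumes -- one does not get $\langle\dvol_M,\cdot\rangle=\vol(M)$ exactly, only an error smaller than $\vol(M)/2$, which is why criterion~(3) of Proposition~\ref{prop:double:integration} is invoked rather than the exact equality you assert. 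Finally, the equivariance issue you flag as the crux is real but is resolved differently from your fundamental-domain-in-$G$ approach: $f_{\sigma,\tau}$ is made well defined on $G/\pi_1(N)$ by counting $\pi_1(N)$-translates (independence of the lift $\tilde\sigma$ and of the coset representative), while the left $\pi_1(M)$-structure is encoded by requiring the $0$-vertices of the grid simplices to lie in a fundamental domain of the deck action on $\Hyp^n$, compatibly with the chosen path classes; this is what makes the $\partial_0$-face computation and the norm identity of Lemma~\ref{lemma:norm:varphi} work. As it stands, your proposal describes real smearing plus the wish that its coefficients be integral; the integralisation step, which is the actual content of the proof, is missing.
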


Moreover, we need the following classical computations:

\begin{rem}[proportionality for classical simplicial volume]\label{rem:simvolhypclassical}
  If $M$ is an oriented closed connected hyperbolic $n$-manifold, then
  Gromov and Thurston established the
  proportionality~\cite{Grom82,Thurston}\cite[Theorem~C.4.2]{BePe}
  \[ \sv M = \frac{\vol(M)}{v_n}.
  \]
  Furthermore, this proportionality also holds for
  \begin{itemize}
  \item the locally finite simplicial volume in the finite-volume case and
  \item the relative simplicial volume (also with boundary control) in the
    compact case with toroidal boundary.
  \end{itemize}
  By now, many proofs of these extensions can be found in the literature --
  some of them deriving the relative case from the finite-volume case, and
  some of them deriving the finite-volume case from the relative case.
  A concise exposition is for instance given by Fujiwara and
  Manning~\cite[Appendix~A]{FM}.
\end{rem}

We are now ready to prove the proportionality Theorem~\ref{thm:hyp3}.

\begin{proof}[Proof of Theorem~\ref{thm:hyp3}]
We first prove that 
$$
\ifsvlf M \geq \frac{\vol(M)}{v_3}. 
$$
By Proposition~\ref{prop:integration:coeff:inequality:ord:foliated},
we have the following inequality between locally finite integral
foliated simplicial volume and the classical locally finite simplicial
volume:
$$
\ifsvlf M \geq \rVert M \lVert_\lf.
$$
Therefore, the proportionality principle for complete finite-volume hyperbolic
manifolds (Remark~\ref{rem:simvolhypclassical}) allows us to conclude that
$$
\ifsvlf M \geq \lVert M \rVert_{\lf} = \frac{\vol(M)}{v_3}.
$$

We now prove the opposite inequality. The stable integral simplicial
volume of every closed hyperbolic $3$-manifold $Z$ agrees with its
classical simplicial volume~\cite[Theorem~1.7]{FLPS}. This shows that
$$
\stisv Z = \Vert Z \rVert = \frac{\vol(Z)}{v_3},
$$
where the last equality comes from the proportionality principle for
closed hyperbolic manifolds (Remark~\ref{rem:simvolhypclassical}). The
crucial step of the proof is now Theorem~\ref{thm:pp}.  Indeed, since
the volume is multiplicative with respect to finite coverings, we have
$$
\frac{\stisv Z}{\vol(Z)} = \inf \Bigl\{\frac{\lVert N \rVert_\mathbb{Z}}{\vol(N)}
\Bigm| N \rightarrow Z \mbox{ is a finite covering} \Bigr\}. 
$$
Applying Theorem~\ref{thm:pp}, we can conclude that
$$
\frac{1}{v_3} = \frac{\stisv Z}{\vol(Z)} \geq \frac{\ifsvlf M}{\vol(M)}. 
$$
This finishes the proof.
\end{proof}

\begin{rem}
  It is worth noting that Theorem~\ref{thm:hyp3} cannot hold in higher
  dimensions.  Indeed, in the closed case, in dimension~$n \geq 4$,
  there is a uniform gap between integral foliated simplicial volume
  and simplicial volume for hyperbolic
  $n$-manifolds~\cite[Theorem~1.8]{FLPS}.
\end{rem}

\subsection{Setup for the construction of the smearing map}\label{subsec:smearing}

We now come to the preparations for the proof of Theorem~\ref{thm:pp}.
In order to prove this weak form of proportionality, we will stick to
a simplistic, discrete version of smearing (which will be
enough for our purposes), based on the following notion of meshes
and straightening.

\begin{defn}[decorated mesh of a Riemannian manifold]
        Let $M$~be a Riemannian manifold without boundary. A \emph{mesh} $\mathcal{P} =
        (P_i)_{i \in I}$ is a locally finite partition
        of~$M$ into (at most) countably many Borel sets obtained 
        from a locally finite triangulation of
        $M$. Moreover, we ask that the universal covering projection be
        trivial over each $P_i$.
        
        A \emph{decorated mesh} of $M$ is a pair
        $(\mathcal{P}, T)$, where $\mathcal{P}$ is a mesh of $M$ and
        $T =(t_i)_{i \in I}$ is a countable subset of
        $M$ such that $t_i \in \, P_i$ for each $i \in I$.
        
        We define the \emph{size} of a (decorated) mesh $\mathcal{P}$
        to be the supremum of all the diameters of the Borel sets
        appearing in $\mathcal{P}$.
\end{defn}

\begin{setup}\label{setup:smearing}
  Let $n \in \N$, let $N$ be an oriented \emph{closed} connected
  hyperbolic $n$-manifold (with fundamental group~$\Lambda$ and
  universal covering projection~$\pi_N \colon \Hyp^n \longrightarrow
  N$), and let $M$ be an oriented connected \emph{complete
    finite-volume} hyperbolic $n$-manifold (with fundamental
  group~$\Gamma \coloneqq \pi_1(M,x_0)$ and universal covering
  projection~$\pi_M \colon \Hyp^n \longrightarrow M$).  Moreover, let
  $G \coloneqq \Isom^+(\Hyp^n)$ be the group of orientation-preserving
  isometries of~$\Hyp^n$.
  
  We first choose an integral fundamental cycle~$c$ of~$N$, which by
  Remark~\ref{rem:efficient:geode:cycle} we may assume is geodesic.
  We denote by~$\Delta$ the maximal diameter of lifts to~$\Hyp^n$ of
  simplices with non-zero coefficient in~$c$.

  We construct meshes on~$M$ and~$\Hyp^n$ as follows:
  \begin{itemize}
  \item We choose a mesh~$\mathcal P$ of $M$ whose size is smaller than the
      global injectivity radius of~$N$.
  \item By the triviality condition in the definition of the mesh
    $\mathcal{P}$, we can choose, for each~$P_i$, a homeomorphic
    lift~$\tilde P_i \subset \Hyp^n$.  We
    write~$\widetilde{\mathcal{P}} \coloneqq (\widetilde{P_i} )_{i \in
      I}$.  The union~$D \coloneqq \bigcup_{i\in I}\tilde P_i$ is then
    a Borel fundamental domain for the deck transformation action
    of~$\Gamma$ on~$\Hyp^n$.  Then $\Gamma \cdot
    \widetilde{\mathcal{P}}$~is a mesh of~$\mathbb{H}^n$ with size
    smaller than the global injectivity radius of~$N$.
  \item We now impose one further restriction on the size of the Borel
    sets in~$\mathcal{P}$ (and~$\Gamma \cdot \tilde {\mathcal P}$),
    which can be attained through a suitable subdivision.  For each
    $i\in I$, let $K_i$~be the closure of the union of all Borel sets
    in~$\Gamma \cdot \tilde{\mathcal P}$ that intersect the
    $\Delta$-neighbourhood of~$\tilde{P_i}$.  Now consider the volume
    function
    \begin{align*}
       \vol \colon (\mathbb{H}^n)^{n+1} &\rightarrow \mathbb{R} \\
       (x_0, \dots, x_n) &\mapsto \vol_{\Hyp^n}\bigl(\straight(x_0, \dots, x_n)\bigr).
    \end{align*}
    Since $K_i^{n+1} \subset (\mathbb{H}^n)^{n+1}$ is compact, and
    $\vol$~is continuous with respect to the maximum metric~$d_\infty$
    on~$(\mathbb{H}^n)^{n+1}$, the restriction of~$\vol$
    to~$K_i^{n+1}$ is uniformly continuous.  In particular, there
    exists~$\delta_i >0$ such that whenever
    $\underline{x}$~and~$\underline{y}$ in~$K_i^{n + 1}$ satisfy
    $d_\infty(\underline{x}, \underline{y}) \le \delta_i$, we have
    \begin{equation}\label{eq.closevol}
      \bigl|\vol(\underline{x}) - \vol(\underline{y}) \bigr| <
      \frac{\vol(N)}{2 \cdot |c|_1} .
    \end{equation}
    
    The fact that $\Gamma \cdot \tilde{\mathcal{P}}$~is locally finite
    implies that for each~$j \in I$, the Borel set~$\tilde P_j$ is
    contained in at most finitely many $\Gamma$-translates of
    subsets~$K_i$. Taking $\varepsilon_j$~to be the (strictly
    positive) minimum among the respective~$\delta_i$, we refine the
    mesh~$\mathcal{P}$ by (locally finitely) subdividing each~$P_j$
    into Borel sets of diameter less than~$\varepsilon_j$.
  \end{itemize}
  We will use the following straightening procedure: 
  \begin{itemize}
  \item For~$k\in \N$, we write~$\widetilde S_k \subset
    \map(\Delta^k,\Hyp^n)$ for the set of geodesic simplices
    in~$\Hyp^n$ whose vertices all lie in~$\Gamma \cdot \widetilde T$
    and whose $0$-vertex lies in~$\widetilde T$. For the corresponding
    set of simplices on~$M$, we use the notation $S_k
    \coloneqq\{\pi_M \circ \tau \mid \tau \in \widetilde S_k
    \}$. For~$\varrho \in S_k$, we designate by~$\tilde \varrho$ the
    unique $\pi_M$-lift of~$\varrho$ in~$\widetilde S_k$.
  \item For~$k \in \N$, we consider the function
  \[\snap \colon \map(\Delta^k, \Hyp^n) \longrightarrow \Gamma \cdot \widetilde S_k\]
    that ``snaps'' the vertices of~$\sigma \colon \Delta^k
    \to \Hyp^n$ to the grid~$\Gamma \cdot \widetilde T$
    and then takes the geodesic simplex associated with this tuple of
    vertices. More precisely, let $Q_0, \dots, Q_k$ be the Borel sets in
    the mesh~${\Gamma \cdot \widetilde{P}}$ containing the vertices of
    $\sigma$, and let $q_0, \dots, q_k$ the corresponding points in
    $\Gamma \cdot \widetilde T$. Then, the map snap sends $\sigma$ to
    the geodesic simplex in $\Gamma \cdot \widetilde S_k$ spanned by the
    vertices~$q_0, \dots, q_k$.
  \end{itemize}
  We consider the standard $\Gamma$-space~$Z_M$ on~$M$ given by
  \begin{itemize}
  \item the probability space~$G/\Lambda$ (with
    the normalised Haar measure $\overline{\mu}$) and
  \item the $\Gamma$-action by
    left translation on~$G$.
  \end{itemize}
  Here, we endow $G$ with the (unimodular) Haar measure $\mu$
  satisfying the normalisation $\mu(X_M) = 1$, where $X_M \subset G$
  is a Borel fundamental domain of the right $\Lambda$-action on $G$.

  Moreover, we will use the following parameter space on~$M$: For each
  point~$p\in M$, we choose a path~$c_p$ in~$M$ from the
  basepoint~$x_0\in M$ to~$p$, with the property that each~$c_p$ lifts
  to a path in~$\Hyp^n$ having both endpoints in the same
  $\Gamma$-translate of the fundamental domain~$D$.  We then extend
  the standard $\Gamma$-space~$Z_M$ to a standard
  $\Pi(M)$-space~$\alpha_M$ using this choice of paths
  (Remark~\ref{rem:groupoidvsgroup}).  Then $\alpha_M$~is ergodic by
  the Moore ergodicity theorem~\cite[Theorem~4.10.2]{wittemorris}.  We
  will also make use of the associated normed local coefficient
  system~$\linfz{\alpha_M}$ on~$M$
  (Definition~\ref{def:ass:norm:loc:coeff:syst}).
\end{setup}

\subsection{A useful Borel map}

The following maps allow to smear simplices of~$N$ over~$\Hyp^n$ and
then to associate the correct ``weight'' to them on~$M$.

\begin{defn}[a useful Borel map]
  In the situation of Setup~\ref{setup:smearing}, for $k \in \,
  \mathbb{N}$, for every singular simplex $\sigma \colon \Delta^k
  \rightarrow N$ and for every simplex $\tau \in \, \Gamma \cdot
  \widetilde S_k$, we define the function $f_{\sigma, \tau} \in \,
  L^\infty(Z_M, \mathbb{Z})$ as follows: We choose a $\pi_N$-lift
  $\tilde{\sigma}$ of $\sigma$ and then, we set
  $$
   f_{\sigma, \tau}(x \Lambda) \coloneqq \# \bigl\{ \lambda \in \Lambda \mid \snap(x\lambda \tilde\sigma) = \tau\}.
  $$
\end{defn} 

\begin{lemma}
  In the situation of Setup~\ref{setup:smearing}, the map $f_{\sigma,
    \tau}$ is a well-defined essentially bounded Borel map~$Z_M \longrightarrow \Z$.
\end{lemma}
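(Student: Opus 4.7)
The plan is to verify in turn four properties: (i) the formula is independent of the chosen $\pi_N$-lift $\tilde\sigma$ of~$\sigma$; (ii) it is constant on right $\Lambda$-cosets in~$G$, hence descends to a function on~$G/\Lambda = Z_M$; (iii) it is pointwise bounded by~$1$ (in particular integer-valued and essentially bounded); and (iv) it is Borel measurable.

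For (i), any other lift has the form $\tilde\sigma' = \lambda_0 \tilde\sigma$ for a unique $\lambda_0 \in \Lambda$; the bijection $\lambda \mapsto \lambda\lambda_0^{-1}$ of~$\Lambda$ identifies the counting sets defined using $\tilde\sigma'$ and~$\tilde\sigma$, preserving cardinality. For (ii), if $x' = x\lambda'$ for $\lambda' \in \Lambda$, the bijection $\lambda \mapsto \lambda'^{-1}\lambda$ similarly identifies the counting sets. Hence $f_{\sigma,\tau}$ descends unambiguously to~$Z_M$.

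The main work is in (iii). Unfolding the definition of $\snap$, the condition $\snap(x\lambda\tilde\sigma) = \tau$ is equivalent to requiring, for each $i \in \{0,\dots,k\}$, that $x\lambda\tilde\sigma[i]$ lies in the unique Borel set $Q_i$ of the mesh $\Gamma \cdot \widetilde{\mathcal P}$ containing the $i$-th vertex of~$\tau$. Suppose now that $\lambda_1, \lambda_2 \in \Lambda$ both satisfy this; write $p \coloneqq \tilde\sigma[0]$. Then $x\lambda_1 p$ and $x\lambda_2 p$ both lie in~$Q_0$, so since $x$ is an isometry,
\[
  d_{\Hyp^n}(\lambda_1 p, \lambda_2 p) \leq \diam(Q_0) < \mathrm{inj}(N),
\]
by the choice of mesh size in Setup~\ref{setup:smearing}. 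Since every non-trivial element of~$\Lambda$ acts on~$\Hyp^n$ with displacement at least $2\,\mathrm{inj}(N)$ at every point, the isometry $\lambda_2 \lambda_1^{-1}$ must be trivial, i.e.\ $\lambda_1 = \lambda_2$. Thus $f_{\sigma, \tau}(x\Lambda) \in \{0,1\}$.

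For (iv), for each $\lambda \in \Lambda$ the function $\phi_\lambda \colon G \to \{0,1\}$ indicating $\snap(x\lambda\tilde\sigma) = \tau$ is the indicator of the Borel subset $\bigcap_{i=0}^k \{x \in G \mid x\lambda\tilde\sigma[i] \in Q_i\}$ of~$G$, and is therefore Borel; summing over the countable set~$\Lambda$ yields the Borel function whose descent to $G/\Lambda$ equals $f_{\sigma, \tau}$, the pointwise convergence being guaranteed by~(iii). Combining (i)--(iv) shows that $f_{\sigma,\tau}$ is a well-defined element of~$L^\infty(Z_M, \Z)$. The only non-formal step is (iii), which relies crucially on the careful mesh-size choice in Setup~\ref{setup:smearing} being strictly smaller than the injectivity radius of the closed hyperbolic manifold~$N$.
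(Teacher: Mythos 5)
Your proof is correct and takes essentially the same route as the paper's: the same two coset bijections for well-definedness, the same comparison of mesh size with the injectivity radius of~$N$ (which you make explicit via the displacement bound~$2\,\mathrm{inj}(N)$ for non-trivial elements of~$\Lambda$, using only the $0$-vertex) to get values in~$\{0,1\}$, and measurability via countability of~$\Lambda$. The only cosmetic difference is in the last step -- you sum Borel indicator functions over~$\Lambda$ and descend to~$G/\Lambda$, whereas the paper projects a Borel subset of~$G\times\Lambda\times\{\tilde\sigma\}$ through $G\to G/\Lambda$ -- and both presentations rest on the same standard fact that right-$\Lambda$-invariant Borel data on~$G$ induces Borel data on~$G/\Lambda$.
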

\begin{proof}
        First of all, we prove that the definition of~$f_{\sigma,
          \tau}$ does not depend on the choice of lift~$\tilde\sigma$
        of $\sigma$. Indeed, for a different lift~$\lambda \tilde
        \sigma$ (where $\lambda \in \Lambda$), we have a bijection
	\begin{align*}
	\{\mu  \in \Lambda \mid \snap(x\mu \tilde\sigma) = \tau\} &\to \{\mu  \in \Lambda \mid \snap(x\mu \lambda \tilde\sigma) = \tau\}\\
	\mu &\mapsto \mu \lambda^{-1}.
	\end{align*}
	Similarly, independence of the choice of~$x$ over, say,
        $x\lambda$ (with $\lambda \in \Lambda$) as a coset
        representative for~$x\Lambda$ follows from the bijection
	\begin{align*}
	\{\mu  \in \Lambda \mid \snap(x\mu \tilde\sigma) = \tau\} &\to \{\mu  \in \Lambda \mid \snap(x \lambda \mu \tilde\sigma) = \tau\}\\
	\mu &\mapsto \lambda^{-1} \mu.
	\end{align*}
	This shows that $f_{\sigma, \tau}$~is a well-defined function
        (to~$\N \cup \{\infty\}$).

	To see that the function~$f_{\sigma, \tau}$ lives
        in~$L^\infty(Z_M;\Z)$, we first show that it is essentially
        bounded. In fact, we prove more: it only takes values in the
        set~$\{0,1\}$. Indeed, given~$x\in G$, we consider the translated
        decorated mesh~$(x^{-1} \cdot \Gamma\cdot
        \widetilde{\mathcal{P}}, x^{-1} \cdot \Gamma\cdot
        \widetilde{T})$ of~$\Hyp^n$. The size of this mesh is still
        less than the global injectivity radius of~$N$. Therefore, any
        two distinct $\Lambda$-translates of a simplex in
        $\mathbb{H}^n$ cannot have the vertices in the same Borel sets
        of the mesh $x^{-1} \cdot \Gamma\cdot
        \widetilde{\mathcal{P}}$. This means that no two such
        translates can have the vertices close to the ones of
        $x^{-1}\tau$.
        Looking again at the original
        mesh~$\Gamma \cdot \widetilde{\mathcal{P}}$, this
        means that for each simplex~$\tilde \sigma \in \Hyp^n$, there
        is at most one~$\lambda \in \Lambda$ such that $\snap(x\lambda
        \tilde \sigma) = \tau$.
	
	To see that $f_{\sigma, \tau}$~is a Borel-measurable function,
        observe that by definition, we can express~$f_{\sigma,\tau}$ as
	\[x\Lambda \mapsto \begin{cases*}
	1 &\text{if there exists~$\lambda \in \Lambda$ with $\snap(x\lambda \tilde \sigma) = \tau$}\\
	0 & \text{otherwise}
	\end{cases*}.\]
	The function~$f_{\sigma, \tau}$ is thus the characteristic
        function of a subset~$A \subseteq G/\Lambda$, which
        we claim is Borel.
        
        To this end, we express~$A$
        differently: consider the (continuous, hence measurable) map
	\begin{align*}
		 \psi\colon G \times \Lambda \times (\Hyp^n)^{k+1} & \to (\Hyp^n)^{k+1}\\
		 (x, \lambda, \alpha) &\mapsto x\lambda\cdot \alpha,
	\end{align*}
	where $G$~acts diagonally on the right hand side. Notice
        that a tuple~$\alpha \in (\Hyp^n)^{k+1}$ uniquely determines a
        geodesic simplex in~$\Hyp^n$, and vice versa, so we now
        incorporate this identification into the notation. Let $Q_0,
        \ldots, Q_k$ be the Borel subsets in the mesh~$\Gamma
        \cdot (\tilde P_i)_{i\in I}$ containing, respectively, the
        vertices~$\tau[0], \ldots, \tau[k]$ of~$\tau$. Then it is easy
        to check that $A$~is the set obtained by projecting
	\[  (G\times \Lambda \times \{\tilde\sigma\})\cap \psi^{-1}(Q_0 \times \ldots \times Q_k)\]
	via the composition~$G\times \Lambda \times \{\tilde \sigma\}
        \to G\to G/\Lambda$.
        
        This construction of~$A$ starts
        with the intersection of a Borel set and a closed set, which
        is thus Borel. Hence, we are only left to show that
        the two projection maps that follow take Borel sets to Borel sets.
        Let us denote by $\iota_\lambda\colon G \to
        G\times \Lambda \times \{\tilde \sigma \}$ the inclusion
        of~$G$ at the~$(\lambda, \tilde \sigma)$-slice.  Since the
        projection~$G\times \Lambda \times \{\tilde \sigma\} \to G$
        takes each Borel subset~$B$ to the countable
        union~$\bigcup_{\lambda\in\Lambda} \iota_\lambda^{-1}(B)$, it
        sends Borel sets to Borel sets. Finally, if $B$~is a Borel
        subset of~$G$, then for the quotient map~$p\colon G \to
        G/\Lambda$ we see that~$p^{-1}(p(B)) = \bigcup_{\lambda \in
          \Lambda}\lambda \cdot B$ is Borel, so $p(B)$~is Borel.
\end{proof}


\subsection{The smearing map and its properties}\label{sec:smearing:map:def}

\begin{defn}[the parametrised discrete smearing map]
	In the situation of Setup~\ref{setup:smearing}, for~$k \in
        \N$, we define the \emph{parametrised discrete smearing map in
          degree~$k$} by
	\begin{align*}
	\varphi_k \colon C_k(N;\Z) & \longrightarrow C_k(M; L^\infty(\alpha_M, \mathbb{Z})) 
	\\
	\sigma	& \longmapsto \sum_{\varrho \in S_k} f_{\sigma, \tilde \varrho} \cdot \varrho.
	\end{align*}
\end{defn}

\begin{lemma}
	In the situation of Setup~\ref{setup:smearing}, we have:
	\begin{enumerate}
	        \item For each~$k \in \N$, the map~$\varphi_k$ is well-defined.
		\item The sequence~$(\varphi_k)_{k\in \N}$ is a chain map.
                \item The image of~$\varphi_*$ consists of smooth chains. 
	\end{enumerate}
\end{lemma}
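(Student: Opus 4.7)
The plan is to verify the three claims in order; by construction, Part~(3) is nearly immediate, and Part~(1) adds only a local-finiteness observation to the preceding lemma, while the real content sits in Part~(2), where the chain-map identity has to be matched against the coefficient twist prescribed by the local system~$\linfz{\alpha_M}$.

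For Part~(1): the preceding lemma already provides that each coefficient~$f_{\sigma,\tilde\varrho}$ is a well-defined $\{0,1\}$-valued element of~$\linfz{Z_M}$, independent of the chosen $\pi_N$-lift~$\tilde\sigma$. It remains to check that the formal sum $\sum_{\varrho\in S_k}f_{\sigma,\tilde\varrho}\cdot\varrho$ gives a bona fide (locally finite, in the sense of Section~\ref{sec:simvolopen}) chain on~$M$. Fix a compact set $K\subset M$ and suppose $\varrho\in S_k$ has image meeting~$K$ and $f_{\sigma,\tilde\varrho}\not\equiv 0$. Then $\tilde\varrho=\snap(g\tilde\sigma)$ for some $g\in G$ with $g\tilde\sigma[0]\in D$, so the vertices of $\tilde\varrho$ lie within a bounded neighbourhood (of size controlled by~$\Delta$ and the size of the mesh) of a fixed lift of~$K$; by local finiteness of $\Gamma\cdot\widetilde T$, only finitely many such~$\tilde\varrho$ can arise. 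Linear extension over the free abelian group~$C_k(N;\Z)$ then finishes well-definedness.

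For Part~(2): one verifies $\partial\varphi_k(\sigma)=\varphi_{k-1}(\partial\sigma)$ on generators~$\sigma$. The driving observation is that $\snap$ commutes with face operators, i.e., $\snap(\partial_i\tau)=\partial_i\snap(\tau)$ for all~$\tau\in\map(\Delta^k,\Hyp^n)$. For $i\geq 1$ the $0$-vertex is preserved, so $\partial_i$ maps $\widetilde S_k$ into $\widetilde S_{k-1}$; a direct bijection between pairs $(\varrho,\lambda)\in S_k\times\Lambda$ with $\partial_i\varrho=\varrho''$ and $\snap(x\lambda\tilde\sigma)=\tilde\varrho$, and the $\lambda$'s with $\snap(x\lambda\partial_i\tilde\sigma)=\widetilde{\varrho''}$, yields pointwise on~$Z_M$ the identity $\sum_{\varrho\in S_k,\,\partial_i\varrho=\varrho''}f_{\sigma,\tilde\varrho}=f_{\partial_i\sigma,\widetilde{\varrho''}}$. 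The genuine obstacle is $i=0$: the face $\partial_0\tilde\varrho$ lies in $\Gamma\cdot\widetilde S_{k-1}$ but typically not in~$\widetilde S_{k-1}$, so one must translate by the unique $\gamma_\varrho\in\Gamma$ with $\gamma_\varrho^{-1}\partial_0\tilde\varrho\in\widetilde S_{k-1}$, which means $\widetilde{\partial_0\varrho}=\gamma_\varrho^{-1}\partial_0\tilde\varrho$. By the choice of connecting paths~$(c_p)_{p\in M}$ in Setup~\ref{setup:smearing}, this $\gamma_\varrho$ is precisely the class $h_{\varrho[0,1]}=c_{\varrho[0]}\ast\varrho[0,1]\ast c_{\varrho[1]}^{-1}\in\Gamma$ used in the extension of $Z_M$ to~$\alpha_M$ via Remark~\ref{rem:groupoidvsgroup}; equivalently, it is the element by which $\linfz{\alpha_M}(\varrho[0,1])$ precomposes functions on~$Z_M$. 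Substituting and running the analogous $(\varrho,\lambda)$-bijection, the coefficient twist and the $\Gamma$-shift cancel, giving the identity $\sum_{\varrho\in S_k,\,\partial_0\varrho=\varrho''}\linfz{\alpha_M}(\varrho[0,1])(f_{\sigma,\tilde\varrho})=f_{\partial_0\sigma,\widetilde{\varrho''}}$ as required.

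For Part~(3): every simplex appearing in $\varphi_k(\sigma)$ is of the form $\pi_M\circ\tau$ for some geodesic simplex $\tau\in\widetilde S_k$. Geodesic simplices in~$\Hyp^n$ are smooth (by smoothness of the exponential map, as recalled in Remark~\ref{rem:lipschitz:cost:diameter}), and the Riemannian covering $\pi_M\colon\Hyp^n\to M$ is smooth, so the composition is smooth.
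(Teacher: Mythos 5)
Your proposal is correct and follows essentially the same route as the paper: the same local-finiteness argument for well-definedness (bounded diameter of snapped simplices, the $0$-vertex constraint in~$\widetilde T$, and local finiteness of the mesh), the same treatment of the chain-map property with $\partial_0$ as the only delicate face, identifying the correction element with the holonomy class $c_{\varrho[0]} * \varrho[0,1] * c_{\varrho[1]}^{-1}$ via the choice of paths in Setup~\ref{setup:smearing} and concluding by the $(\varrho,\lambda)$-bijection, and the same one-line smoothness argument. The only (harmless) imprecisions are in Part~(1): for a general generator~$\sigma$ the diameter bound should involve $\diam(\tilde\sigma)$ rather than~$\Delta$, and the bounded region containing the vertices is pinned down by the $0$-vertex lying in~$\widetilde T$ over a neighbourhood of~$K$ rather than by ``a fixed lift of~$K$''.
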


\begin{proof}
  \emph{Ad~1.} 
 	We have to show that $\varphi_k$ sends integral finite chains
        to locally finite chains with local coefficients, and it is
        clearly enough to show that this is true for each
        simplex~$\sigma$ on~$N$.  So let $K\subset M$ be a compact
        subset and let $\rho \in S_k$ be a simplex with non-zero
        coefficient in~$\phi_k(\sigma)$ that intersects~$K$. We will
        see that these conditions on~$\rho$ restrict it to a finite
        number of possibilities.
        
        Since there is an isometric translate of~$\tilde \sigma$ 
        that snaps to~$\tilde\rho$, by the triangle inequality, we see that
        that $\tilde\rho$~must have diameter at most
        \[d \coloneqq \diam(\tilde\sigma) + 2 \cdot \size(\mathcal P),\]
        and because the universal covering projection map~$\pi_M$
        is distance-non-increasing, the same upper bound is valid for~$\diam(\rho)$.
        
        It follows that $\rho[0]$~is in the $d$-neighbourhood of~$K$,
        and as $\mathcal P$~is locally finite, there are only finitely
        many~$P_1,\ldots,P_f \in \mathcal P$ intersecting this
        neighbourhood. Hence, $\tilde \rho[0]$ lies in one
        among~$\tilde P_1,\ldots,\tilde P_f \in \tilde{\mathcal P}$.
        The image of~$\tilde \rho$ must therefore be contained in a
        $d$-neighbourhood of the union~$\bigcup_{j=1}^f \tilde
        P_j$. This is a bounded subset of~$\Hyp^n$, which again
        intersects only finitely many Borel sets~$Q_1, \ldots, Q_m$ of
        the locally finite mesh~$\Gamma \cdot \tilde{\mathcal P}$.
        Denoting by~$t_1, \ldots, t_m$ the corresponding points of the
        decorated structure~$\Gamma\cdot\tilde T$, we see that~$\tilde
        \rho$ is constrained to have its vertices in the set~$\{t_1,
        \ldots, t_m \}$.  As a $\tilde\rho$ is geodesic, and geodesic
        simplices are entirely determined by their (ordered) vertices,
        we conclude there are only finitely-many possibilities
        for~$\tilde\rho$, and thus the same is true of~$\rho$.

   \emph{Ad~2.}  We now prove that $\varphi_k$~is a chain map, by
        showing that it is compatible with each face
        map~$\partial_j$. In fact, we shall present only the
        computation of the most delicate case~$j=0$, the remaining
        ones being easily recovered by suppressing all occurrences of
        the correction term~$\gamma_\varrho$. By Definition~\ref{def:chainslc} we have,
        for each~$\sigma\colon \Delta^k \to N$ 
	\[\partial_0(\varphi_k(\sigma)) = \sum_{\varrho \in S_k} (f_{\sigma, \tilde \varrho}\cdot \gamma_\varrho) \cdot \partial_0 \varrho,\]
	where $\gamma_\varrho$~denotes the element~$c_{\varrho[0]} *
        \varrho[0,1]* c_{\varrho[1]}^{-1} \in \Gamma$.  
	It is straightforward to check that each~$\gamma \in \Gamma$
        acts on~$f_{\sigma, \tau}$ by $f_{\sigma, \tau} \cdot \gamma =
        f_{\sigma, \gamma^{-1} \tau}$, so the previous formula can be
        rewritten as
	\[ \partial_0(\varphi_k(\sigma)) =
        \sum_{\varrho \in S_k} f_{\sigma, \gamma_\varrho^{-1} \tilde \varrho}  \cdot \partial_0 \varrho.\]
	Expressing this summation in terms of the $(k-1)$-simplices yields:
	\[ \partial_0(\varphi_k(\sigma)) =
        \sum_{\tau \in S_{k-1}} \bigg( \sum_{\substack{\varrho \in S_k\\ \partial_0 \varrho = \tau}} f_{\sigma, \gamma_\varrho^{-1} \tilde \varrho}\bigg) \cdot \tau. \]
	
	We now claim that for each~$\tau \in S_{k-1}$, the inner
        summation simplifies as follows:
	\begin{equation}\label{eq.simplexsum}
	\sum_{\substack{\varrho \in S_k\\ \partial_0 \varrho = \tau}}f_{\sigma, \gamma_\varrho^{-1} \tilde\varrho} = f_{\partial_0\sigma, \tilde \tau}.
	\end{equation}
	After this step is justified, the proof will be complete, since
	\[\sum_{\tau \in S_{k-1}} f_{\partial_0\sigma , \tilde \tau}  \cdot \tau = \varphi_{k-1} (\partial_0 \sigma).\]
	
	We now justify Formula~(\ref{eq.simplexsum}). The function on
        the left-hand side assigns to each coset~$x\Lambda\in
        G/\Lambda$ the value
	$\sum_{\substack{\varrho \in S_k\\ \partial_0 \varrho = \tau}} \# \{\lambda \in \Lambda \mid \snap(x\lambda \tilde\sigma) = \gamma_\varrho^{-1} \tilde \varrho \}$,
	which is the cardinality of the disjoint union
	\[ A\coloneqq \coprod_{\substack{\varrho \in S_k\\ \partial_0 \varrho = \tau}} \{\lambda \in \Lambda \mid \snap(x\lambda \tilde\sigma) = \gamma_\varrho^{-1} \tilde\varrho \}. \]
	
	On the other hand, the right-hand side of
        Formula~(\ref{eq.simplexsum}) assigns to~$x\Lambda$ the cardinality of
	\begin{align*}
	B\coloneqq& \{\lambda \in \Lambda \mid \snap(x\lambda\partial_0 \tilde\sigma) = \tilde\tau\} \\
	= &\{\lambda \in \Lambda \mid \partial_0 \snap(x\lambda \tilde\sigma) = \tilde\tau\}.
	\end{align*}

	 Since each of~$c_{\varrho[0]}, c_{\varrho[1]}$ lifts to a
         path with both endpoints in the same $\Gamma$-translate
         of~$D$, and for each $\varrho\in S_k$ the simplex $\tilde
         \varrho$~has its $0$-vertex in~$D$, it follows that
         $\gamma_\varrho^{-1}$~acts by bringing the $1$-vertex
         of~$\tilde\varrho$ to~$D$. In other words,
         if~$\partial_0\varrho = \tau$, then~$\partial_0(
         \gamma_\varrho^{-1} \tilde \varrho) = \tilde\tau$.
         
   	 Having made this observation, it is straightforward to check
         that the map from $A$~to~$B$ taking $(\lambda, \varrho)\mapsto
         \lambda$ is a well-defined bijection, with inverse $\lambda
         \mapsto (\lambda ,\pi_M\circ\snap(x\lambda\tilde\sigma))$.

   \emph{Ad~3.}  As the chains in the image of~$\varphi_*$ consist
     of geodesic simplices, they are also smooth
     (Remark~\ref{rem:lipschitz:cost:diameter}).
\end{proof}

\begin{prop}\label{prop.fundclass}
	In the situation of Setup~\ref{setup:smearing}, the
        map~$H_n(\varphi_*)$ sends the integral fundamental class~$[N]$
        of~$N$ to the $\alpha_M$-parametrised locally finite
        fundamental class~$[M]_\lf^{\alpha_M}$ of~$M$.
\end{prop}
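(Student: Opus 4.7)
The plan is to apply the characterisation of $\alpha_M$-parametrised locally finite fundamental cycles given in Proposition~\ref{prop:double:integration}. Since $M$ is tame, $\alpha_M$ is ergodic by Moore's theorem (as recorded in Setup~\ref{setup:smearing}), and the chain $\varphi_n(c)$ is smooth by item~3 of the previous lemma, it remains to verify that $\varphi_n(c)$ is a locally finite Lipschitz chain with $|\varphi_n(c)|_1 < \infty$ and that
\[
 \bigl| \langle \dvol_M, \varphi_n(c) \rangle - \vol(M) \bigr| < \vol(M).
\]

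The simplices appearing in $\varphi_n(c)$ are projections of geodesic simplices in $\Hyp^n$ whose vertices lie in $\Gamma \cdot \widetilde T$, each at distance at most $\diam(\tilde\sigma) + 2 \cdot \size(\mathcal{P}) \le \Delta + 2 \cdot \size(\mathcal{P})$ from the $0$-vertex. Hence their diameters are uniformly bounded, which together with Remark~\ref{rem:lipschitz:cost:diameter} gives a uniform Lipschitz bound. For the $\ell^1$-estimate, I would unfold the integration on $Z_M \cong G/\Lambda$ using the fundamental domain~$X_M$ with $\mu(X_M)=1$, together with the right-invariance of $\mu$ under $\Lambda$ and the fact that each function $f_{\sigma, \tilde\varrho}$ takes only the values $0,1$. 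This yields
\[
 \int_{Z_M} f_{\sigma, \tilde \varrho} \,d\bar\mu = \mu\bigl(\{\,y\in G \mid \snap(y\tilde\sigma)=\tilde\varrho\,\}\bigr),
\]
and summing over $\varrho \in S_n$ gives $\mu(\{y\in G \mid y\tilde\sigma[0] \in D\})$, because the condition that $\snap(y\tilde\sigma)$ lies in $\widetilde S_n$ is exactly the condition that its $0$-vertex sits in $\widetilde T \subset D$.

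The main analytic step is the identification $\mu(\{y\in G \mid yv \in D\}) = \vol(M)/\vol(N)$ for any point $v \in \Hyp^n$. Using that $\Hyp^n \cong G/K$ with $K$ the compact stabiliser of $v$, the pushforward of Haar measure along $y\mapsto yv$ is a constant multiple of the hyperbolic volume; the normalisation $\mu(X_M)=1$ and the fact that $X_M$ projects bijectively onto $\Lambda \backslash G$ (which has total volume $\vol(N) \cdot \mu_K(K)$) pin down the constant as $1/\vol(N)$. Combined with $\vol(D) = \vol(M)$, this gives
\[
 \sum_{\varrho \in S_n} \int_{Z_M} f_{\sigma, \tilde\varrho} \,d\bar\mu = \frac{\vol(M)}{\vol(N)},
\]
and hence $|\varphi_n(c)|_1 \le |c|_1 \cdot \vol(M)/\vol(N) < \infty$.

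Finally, I would compute the double integration. Because $\pi_N$ is a local isometry and $c$ is an integral fundamental cycle of~$N$, $\sum_\sigma a_\sigma \vol_{\Hyp^n}(\tilde\sigma) = \vol(N)$. So if one could replace $\vol_{\Hyp^n}(\tilde\varrho)$ by $\vol_{\Hyp^n}(\tilde\sigma)$ in every summand of
\[
 \langle \dvol_M, \varphi_n(c)\rangle = \sum_\sigma a_\sigma \sum_{\varrho \in S_n} \Bigl(\int_{Z_M} f_{\sigma,\tilde\varrho}\, d\bar\mu\Bigr) \cdot \vol_{\Hyp^n}(\tilde\varrho),
\]
the previous formula would give exactly $\vol(M)$. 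The error in this replacement is controlled by the uniform continuity estimate~(\ref{eq.closevol}) from Setup~\ref{setup:smearing}: whenever $f_{\sigma,\tilde\varrho}(x\Lambda)\neq 0$, the vertices of $\tilde\varrho$ are at distance less than the chosen refinement parameter from those of some isometric translate $y\tilde\sigma$, so $|\vol_{\Hyp^n}(\tilde\varrho) - \vol_{\Hyp^n}(\tilde\sigma)| < \vol(N)/(2|c|_1)$. Summing all errors yields
\[
 \bigl| \langle \dvol_M, \varphi_n(c)\rangle - \vol(M)\bigr|
 \le \sum_\sigma |a_\sigma| \cdot \frac{\vol(N)}{2|c|_1} \cdot \frac{\vol(M)}{\vol(N)}
 \le \frac{\vol(M)}{2} < \vol(M).
\]
The hardest point is keeping the geometric bookkeeping straight --- in particular, verifying the Haar-measure identity with the paper's normalisation, and checking that the only relevant contributions to $|\vol_{\Hyp^n}(\tilde\varrho) - \vol_{\Hyp^n}(\tilde\sigma)|$ are those for vertex configurations within the compact sets $K_i$ to which~(\ref{eq.closevol}) applies; the latter is guaranteed by the refinement of $\mathcal{P}$ performed in Setup~\ref{setup:smearing}. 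Once this is in place, Proposition~\ref{prop:double:integration} immediately identifies $\varphi_n(c)$ as an $\alpha_M$-parametrised locally finite fundamental cycle of $M$, which is the desired statement.
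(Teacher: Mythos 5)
Your proposal is correct and follows essentially the same route as the paper: it verifies the hypotheses of Proposition~\ref{prop:double:integration} (ergodicity of~$\alpha_M$, smoothness, a uniform diameter bound giving the Lipschitz property), establishes $\sum_{\varrho}\int f_{\sigma,\tilde\varrho}\,d\overline\mu = \vol(M)/\vol(N)$ by unfolding over a fundamental domain exactly as in Lemma~\ref{lemma:norm:varphi} (which you re-derive inline rather than cite), and then bounds the double-integration defect by~$\vol(M)/2$ using the uniform continuity estimate~(\ref{eq.closevol}). The only nitpick is the harmless left/right quotient bookkeeping in the Haar-measure normalisation (your $X_M$ projects onto $G/\Lambda$; unimodularity makes the left/right covolumes agree), which you yourself flag as the delicate point.
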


In order to prove Proposition~\ref{prop.fundclass}, we first establish
an upper bound for the norm of the chain map~$\varphi$, whence also
for the norm of~$H_n(\varphi_*)$.

\begin{lemma}\label{lemma:norm:varphi}
	In the situation of Setup~\ref{setup:smearing}, we have for all~$k \in \N$:
	\begin{enumerate}
		\item If $\sigma \in \map(\Delta^k, N)$, then
		\[ \sum_{\varrho \in S_k} \int_{G/\Lambda} f_{\sigma, \tilde\varrho} \d\overline\mu
		= \sum_{\varrho \in S_k} \int_{G/\Lambda} |f_{\sigma,\tilde\varrho}| \d\overline\mu
		= \frac{\vol(M)}{\vol(N)}.
		\]
		\item In particular, $\| \varphi_k\| \leq \vol(M) / \vol(N)$.
	\end{enumerate}
\end{lemma}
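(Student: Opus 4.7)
The first observation is immediate: as established in the preceding lemma, $f_{\sigma,\tilde\varrho}$ takes values only in $\{0,1\}$, so the two integrals in part~1 automatically coincide. It thus remains to compute their common value.

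My plan is to first consolidate the sum $\sum_{\varrho \in S_k} f_{\sigma,\tilde\varrho}(x\Lambda)$ into a single counting function. By construction, $\snap(x\lambda\tilde\sigma)$ always lies in $\Gamma \cdot \widetilde S_k$; it lies in $\widetilde S_k$ itself exactly when its $0$-vertex sits in $\widetilde T$, which (because each decoration $t_i$ lies in $\widetilde P_i$ and $D = \bigcup_i \widetilde P_i$ is a Borel fundamental domain for the deck action of $\Gamma$) happens precisely when $x\lambda p \in D$, where $p \coloneqq \tilde\sigma[0]$. Each such $\lambda$ then determines a unique $\varrho = \pi_M \circ \snap(x\lambda\tilde\sigma) \in S_k$, and the $\{0,1\}$ property ensures no double counting. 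Hence
\[
  \sum_{\varrho \in S_k} f_{\sigma,\tilde\varrho}(x\Lambda) = \#\bigl\{\lambda \in \Lambda \bigm| x\lambda p \in D\bigr\}.
\]

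Next I will integrate and unfold. Since $X_M \subset G$ is a right-$\Lambda$ fundamental domain with $\mu(X_M) = 1$, applying the standard unfolding formula to $h(g) \coloneqq \chi_D(gp)$ yields
\[
  \sum_{\varrho \in S_k} \int_{G/\Lambda} f_{\sigma,\tilde\varrho} \d\overline\mu = \int_G \chi_D(gp) \d\mu(g).
\]
To evaluate the latter, I will use that the orbit map $g \mapsto gp$ exhibits $G$ as a principal $K$-bundle over $\Hyp^n$, with $K = \mathrm{Stab}_G(p)$ compact. The pushforward of $\mu$ is $G$-invariant on $\Hyp^n$, hence equals $c \cdot \vol_{\Hyp^n}$ for some $c > 0$ by uniqueness of $G$-invariant measures on $\Hyp^n$. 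To pin down $c$, I will observe that for any fundamental domain $F_N \subset \Hyp^n$ of the $\Lambda$-action, the preimage $\{g \in G \mid gp \in F_N\}$ is a left-$\Lambda$ fundamental domain in $G$; bi-invariance of Haar measure together with $\mu(X_M) = 1$ then forces $c \cdot \vol(N) = 1$, i.e., $c = 1/\vol(N)$. This gives
\[
  \int_G \chi_D(gp) \d\mu(g) = \frac{\vol(D)}{\vol(N)} = \frac{\vol(M)}{\vol(N)},
\]
completing part~1.

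Part~2 is then a routine triangle inequality: for $c = \sum_i a_i \sigma_i \in C_k(N;\Z)$,
\[
  |\varphi_k(c)|_1 \leq \sum_i |a_i| \sum_{\varrho \in S_k} \int_{G/\Lambda} |f_{\sigma_i, \tilde\varrho}| \d\overline\mu = |c|_1 \cdot \frac{\vol(M)}{\vol(N)}.
\]
The main subtlety will be in the measure-theoretic bookkeeping in the middle step: carefully tracking left- versus right-$\Lambda$-actions on $G$, identifying the correct $\Lambda$-fundamental domains in $G$ that pair well with the orbit map, and verifying that the normalisation $\mu(X_M) = 1$ is precisely what produces the factor $1/\vol(N)$ rather than some other related quantity.
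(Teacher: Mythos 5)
Your proof is correct and follows essentially the same route as the paper: you consolidate the sum into the count $\#\{\lambda \in \Lambda \mid x\lambda\tilde\sigma[0] \in D\}$, unfold the integral over $G/\Lambda$ using the right-$\Lambda$ fundamental domain $X_M$ with $\mu(X_M)=1$, and evaluate the resulting integral over $G$ via the fact that Haar measure pushes forward under the orbit map to a multiple of hyperbolic volume (the paper packages this as $\mu(X_N)=\vol(M)/\vol(N)$ for the left-$\Gamma$ fundamental domain $X_N=\{g \mid g(\tilde\sigma[0])\in D\}$, using unimodularity exactly as you do). The only cosmetic difference is that the paper deduces equality of the two integrals from non-negativity of the $f_{\sigma,\tilde\varrho}$ rather than from their $\{0,1\}$-valuedness, and writes the unfolding as an explicit sum $\sum_{\lambda}\mu(X_M\lambda\cap X_N)=\mu(X_N)$.
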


\begin{proof}
        Since we are dealing with the $\ell^1$-norms on the chain
        complexes, it suffices to prove the first part. Let $\sigma
        \in \map(\Delta^k,N)$ and let $\widetilde \sigma$ be a
        $\pi_N$-lift of~$\sigma$. We set~$t_0 \coloneqq
        \widetilde\sigma[0] \in \Hyp^n$ and $X_N \coloneqq \{ x \in G
        \mid x(t_0) \in D\}$. Then $X_N$ is a Borel fundamental domain
        for the left action of~$\Gamma$ on~$G$ and (because the Haar
        measure on~$G$ pushes forward, under the canonical
        projection~$G \longrightarrow \Hyp^n$, to a scalar multiple of
        the hyperbolic volume)
	\[ \mu(X_N) = \frac{\mu(X_N)}{\mu(X_M)}
                    = \frac{\vol(M)}{\vol(N)}.
	\]
	
	Moreover, we have
	\begin{align*}
	  \bigl| \varphi_k(\sigma)\bigr|_1 &= \sum_{\varrho \in S_k}\int_{G/\Lambda}|f_{\sigma, \tilde\varrho}| \d \overline\mu
          \\&
	= \int_{G/\Lambda} \bigl( x\Lambda\mapsto \#\{\lambda \in \Lambda \mid x\lambda\tilde\sigma[0] \in D \} \bigr)\d \overline\mu\\
	&=  \int_{X_M} \bigl( x\mapsto \#\{\lambda \in \Lambda \mid x\lambda \in X_N \} \bigr)\d\mu
	\\&
        = \int_{X_M} \;\sum_{\lambda \in \Lambda} \chi_{X_N \cdot \lambda^{-1}} \d\mu\\
	&= \sum_{\lambda \in \Lambda} \mu(X_M \cap X_N\cdot \lambda^{-1})
        \\&
	 = \sum_{\lambda \in \Lambda} \mu(X_M \cdot \lambda \cap X_N)\\
	&= \mu(X_N) = \frac{\vol(M)}{\vol(N)}. 
	\end{align*}
	
	Because the functions~$f_{\sigma,\tilde\varrho}$ all are
        non-negative (by construction), we also have $\sum_{\varrho
          \in S_k} \int_{G/\Lambda} f_{\sigma,\tilde\varrho}
        \d\overline\mu = | \varphi_k(\sigma) |_1 = \vol(M)/\vol(N)$.
        This finishes the proof.
\end{proof}

\begin{proof}[Proof of Proposition~\ref{prop.fundclass}]
        We will prove that $H_n(\varphi_*)([N])$ is an
        $\alpha$-para\-metrised locally finite fundamental class via
        double integration. Let $c$ be the geodesic fundamental cycle
        of $N$ appearing in Setup~\ref{setup:smearing}.  We aim to
        show that $\varphi_n(c)$ satisfies condition~(3) of
        Proposition~\ref{prop:double:integration}. However, in order
        to apply this proposition, we first have to check that
        $\varphi_n(c)$ is Lip\-schitz and of finite $\ell^1$-norm.
        Since $\varphi_n$ has bounded norm by
        Lemma~\ref{lemma:norm:varphi} and $|c |_1 < +\infty$, also
        $\varphi_n(c)$ has finite $\ell^1$-norm. In order to show that
        $\varphi_n(c)$ is Lipschitz, let us call $L$ the size of the
        mesh~$\mathcal{P}$. Then, all pairs of vertices of the
        geodesic simplices lying in $\tilde{S}_n$ that contribute
        to~$\varphi_n(c)$ have distance at most~$\Delta +
        2L$. Therefore, they all have diameter bounded by a constant
        depending only on $\Delta$ and~$L$. Therefore, by
        Remark~\ref{rem:lipschitz:cost:diameter}, we know that also
        their Lipschitz constants are all bounded by a uniform
        constant depending only on $\Delta$ and~$L$. This condition
        clearly reflects to the simplices of~$S_n$. This proves that
        $\varphi_n(c)$ is Lipschitz.
	
        We are now ready to apply double integration to
        $\varphi_n(c)$. Writing~$c=\sum_{j=1}^{t}a_j \sigma_j$, we
        have, on the one hand,
        \[ \bigl\langle \dvol_M, I_\R(\varphi_n(c)) \bigr\rangle
        = \sum_{j = 1}^t a_j \cdot \biggl( \sum_{\varrho \in \, S_n} \biggl(\int_{G/\Lambda} f_{\sigma_j, \tilde{\varrho}}  \ d\overline{\mu} \biggr) \cdot \int_{\Delta^n} \varrho^* \dvol_M \biggr),
        \]
        where $\overline{\mu}$ denotes the normalised Haar measure
        described in Setup~\ref{setup:smearing}; on the other hand,
        because $c$ is a fundamental cycle of~$N$, we have
        \begin{align*}
          \vol(M) &= \frac{\vol(M)}{\vol(N)} \cdot
          \biggl( \sum_{j = 1}^t a_j \int_{\Delta^n} \tilde{\sigma}_j^* \dvol_{\mathbb{H}^n} \biggr) .
        \end{align*}
        Therefore, using Lemma~\ref{lemma:norm:varphi}.1, we obtain: 
        \begin{align*}
          &\bigl| \langle \dvol_M, I_\mathbb{R}(\varphi_n(c)) \rangle - \vol(M) \bigr| 
          \\
          & = \biggl|\sum_{j = 1}^t a_j \sum_{\varrho \in \, S_n}
          \biggr(\int_{G/\Lambda} f_{\sigma_j, \tilde{\varrho}}  \ d\overline{\mu} \biggr)
          \cdot \biggr(\int_{\Delta^n} \varrho^* \dvol_M - \int_{\Delta^n} \tilde{\sigma}_j^* \dvol_{\mathbb{H}^n} \biggr)
          \biggr|
          \\
          & = \biggl|\sum_{j = 1}^t a_j \sum_{\varrho \in \, S_n}
          \biggl(\int_{G/\Lambda} f_{\sigma_j, \tilde{\varrho}}  \ d\overline{\mu} \biggr)
          \cdot \biggr(\int_{\Delta^n} \tilde{\varrho}^* \dvol_{\mathbb{H}^n} - \int_{\Delta^n} \tilde{\sigma}_j^* \dvol_{\mathbb{H}^n} \biggr)\biggr|
          \\
          & \leq \sum_{j = 1}^t |a_j|
          \sum_{\varrho \in \, S_n} \biggl( \int_{G/\Lambda} |f_{\sigma_j, \tilde{\varrho}}|  \ d\overline{\mu} \biggr)
          \cdot \biggl|\int_{\Delta^n} \tilde{\varrho}^* \dvol_{\mathbb{H}^n} - \int_{\Delta^n} \tilde{\sigma}_j^* \dvol_{\mathbb{H}^n} \biggr|.
          \end{align*}
          
          Now observe that whenever $\tilde \sigma_j$~is not a
          $G$-translate of some simplex that snaps to~$\tilde\rho$,
          the function~$f_{\sigma_j, \tilde \rho}$ is identically~$0$,
          and so the summands corresponding to such~$j, \varrho$
          vanish.  For the remaining summands~$j, \varrho$, we see
          that the volume estimate given by
          Formula~(\ref{eq.closevol}) from Setup~\ref{setup:smearing}
          applies to~$\tilde \sigma_j,\tilde\rho$.  The previous
          expression is thus bounded above by
          \begin{align*}
          &\sum_{j = 1}^t |a_j|  \sum_{\varrho \in \, S_n}
          \biggr( \int_{G/\Lambda} |f_{\sigma_j, \tilde{\varrho}}|  \ d\overline{\mu} \biggr)
          \cdot \frac{\vol(N)}{2 \cdot |c |_1} 
          \\
          & \leq \biggl( \sum_{j = 1}^t |a_j| \biggr)
          \cdot \frac{\vol(M)}{\vol(N)} \cdot \frac{\vol(N)}{2 \cdot |c|_1} = \frac{\vol(M)}{2} .
        \end{align*}
       	
       	We have thus verified condition (3) from Proposition~\ref{prop:double:integration},
       	which finishes the proof.       
\end{proof}

\subsection{Proof of Theorem~\ref{thm:pp}}\label{sec:proof:prop:princ}

The proof of Theorem~\ref{thm:pp} now follows easily from the previous construction.

\begin{proof}[Proof of Theorem~\ref{thm:pp}]
  Let $n \in \, \mathbb{N}$. The hypotheses on $N$ and $M$ allow us to
  assume the situation of Setup~\ref{setup:smearing}. Thus, we have
  the chain map
  $$
  \varphi_* \colon C_*(N; \mathbb{Z}) \rightarrow C_*(M; L^\infty(\alpha_M, \mathbb{Z}))
  $$
  described in Section~\ref{sec:smearing:map:def}. Let $c \in \,
  C_n(N; \mathbb{Z})$ be an integral fundamental cycle of $N$. As
  proved in Propositions~\ref{prop.fundclass}
  and~\ref{lemma:norm:varphi}~(2), we have that $\varphi_n(c) \in \,
  C_n(M; L^\infty(\alpha_M, \mathbb{Z}))$ is an
  $\alpha_M$-parametrised locally finite fundamental cycle whose
  $\ell^1$-norm is bounded as follows:
  $$
  |\varphi_n(c) |_1 \leq \frac{\vol(M)}{\vol(N)} \cdot |c|_1 .
  $$
  By taking the infimum on both sides we get the claim
  \[    \frac{\isv N}{\vol(N)}
  \geq \frac{\ifsvlf M}{\vol(M)}.
  \qedhere 
  \]
\end{proof}

\section{A proportionality principle\\
for hyperbolic $3$-manifolds with toroidal boundary}\label{sec:hyprel}

From Theorem~\ref{thm:hyp3}, we can now derive the corresponding
relative result with boundary control:

\begin{repcor}{cor:hyp3rel}
  Let $W$ be an oriented compact connected hyperbolic $3$-mani\-fold
  with empty or toroidal boundary and let $M\coloneqq W^\circ$. Then 
  \[ \ifsv{W,\partial W}^{\pfc{\Pi(W)}}_\partial
   = \frac{\vol(M)}{v_3}.
  \]
\end{repcor}

As in the corresponding result for the closed case, the proof will use
input from ergodic theory, more specifically of the ergodic theoretic
properties of the profinite completion of fundamental groups of
hyperbolic $3$-manifolds (Section~\ref{subsec:pfchyp}).

\subsection{The profinite completion of hyperbolic $3$-manifold groups}\label{subsec:pfchyp}

The dependency of parametrised simplicial volume on the chosen
parameter space seems to be a difficult problem (similar to
the fixed price problem for cost of groups). In some special
cases, it is known that the profinite completion of the fundamental
group is the ``best'' parameter space. This can be formalised
in terms of \emph{weak containment} of parameter spaces
and the fact that parametrised simplicial volume is monotone with
respect to weak containment of parameter spaces. These technical
definitions and statements have been deferred to
Appendix~\ref{subsec:weakcont}.

\begin{defn}[Property~$\EMD*$]\label{def.emd*}
  An infinite countable group~$\Gamma$ \emph{satisfies~$\EMD*$} if
  every ergodic standard $\Gamma$-space is weakly contained in the
  profinite completion~$\widehat \Gamma$ of~$\Gamma$.
\end{defn}

In the following we prove that hyperbolic 3-manifolds with empty or
toroidal boundary satisfy~$\EMD*$.  In the closed case, this has
been already noticed by Kechris~\cite[p.~487]{kechris} and Bowen and
Tucker-Drob~\cite[p.~212]{bowen} and proved in detail by Frigerio,
L\"oh, Pagliantini and Sauer~\cite[Proposition~3.10]{FLPS}.

\begin{prop}\label{prop:emd}
  Let~$W$ be an oriented compact connected hyperbolic $3$-man\-i\-fold
  with empty or toroidal boundary. Then~$\pi_1(W)$ satisfies~$\EMD*$.
\end{prop}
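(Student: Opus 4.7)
The plan is to adapt the closed-case argument from \cite[Proposition~3.10]{FLPS} to the cusped setting. That closed-case proof rests on four ingredients: (a) Agol's theorem that closed hyperbolic $3$-manifold groups are virtually compact special; (b) the Haglund--Wise embedding of special cube complex groups into right-angled Artin groups (RAAGs); (c) the fact that RAAGs satisfy $\EMD*$ (due to Kechris and Bowen--Tucker-Drob); (d) inheritance of $\EMD*$ under taking subgroups of residually finite groups, and from finite-index subgroups to their finite-index overgroups.

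For the toroidal-boundary case, I would replace ingredient (a) by Wise's theorem that hyperbolic $3$-manifolds with toroidal boundary have virtually compact special fundamental groups. This yields a finite-index normal subgroup $\Gamma_0 \trianglelefteq \pi_1(W)$ that is the fundamental group of a compact special cube complex. Ingredients (b) and (c) then apply verbatim: $\Gamma_0$ embeds into some RAAG $A$, and since $A$ satisfies $\EMD*$, so does its subgroup $\Gamma_0$ by the subgroup-inheritance property.

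The remaining step is to promote $\EMD*$ from $\Gamma_0$ to~$\pi_1(W)$. Given an ergodic standard $\pi_1(W)$-space $\alpha$, its restriction to $\Gamma_0$ need not be ergodic, but decomposes into ergodic standard $\Gamma_0$-spaces; each piece is weakly contained in~$\widehat{\Gamma_0}$, which sits as an open finite-index subgroup of $\widehat{\pi_1(W)}$. A coinduction-restriction argument together with monotonicity of weak containment under induction from open subgroups then gives that $\alpha$ is weakly contained in $\widehat{\pi_1(W)}$.

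The main obstacle is this last passage: controlling weak containment through the finite-index extension $\Gamma_0 \leq \pi_1(W)$, since ergodicity is not preserved under restriction and one must also verify that the coinduction of (the weak containment witness of) $\widehat{\Gamma_0}$ inside $\widehat{\pi_1(W)}$ is compatible with the original $\pi_1(W)$-action. This is essentially the same technical point that appears in the closed case and can be handled by the standard ergodic decomposition together with the fact that $\widehat{\pi_1(W)} / \widehat{\Gamma_0}$ is finite, allowing us to argue componentwise and reassemble.
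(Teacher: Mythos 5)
Your outline breaks down at ingredient (c). It is not known that right-angled Artin groups satisfy $\MD$ (equivalently $\EMD*$, for residually finite groups): Kechris proved $\MD$ for free groups and Bowen--Tucker-Drob for surface groups, but neither result covers general RAAGs, and indeed whether $\MD$ is closed under direct products is an open problem, so even $F_2 \times F_2$ -- a RAAG containing the cubulated groups you want to use -- is not known to have the property. Since your whole strategy funnels $\pi_1(W)$ through a virtually compact special group embedded in a RAAG, this missing step is fatal: subgroup inheritance (d) has nothing to pull the property back from. Relatedly, your premise mischaracterises the closed case: the proof of \cite[Proposition~3.10]{FLPS} does not go through cube complexes and RAAGs at all.

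The route actually used in the paper (and in the closed case) is Agol's virtual \emph{fibering} theorem: a finite cover $N$ of $W$ fibers over $S^1$, so $\pi_1(N)$ is an extension of $\Z$ by the fundamental group $\Lambda$ of a compact surface fiber -- a surface group in the closed case, a free group when $\partial W$ is toroidal (the fiber has boundary). Both classes are known to satisfy $\MD$, the Bowen--Tucker-Drob inheritance theorem for extensions with amenable ($\Z$) quotient gives $\MD$ for $\pi_1(N)$, Tucker-Drob's equivalence $\MD \Leftrightarrow \EMD*$ for residually finite groups converts this, and finally $\EMD*$ passes from the finite-index subgroup $\pi_1(N)$ up to $\pi_1(W)$. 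Your last step (promotion along a finite-index inclusion) is fine and matches the paper; if you want to salvage your argument you should replace the cubulation input by virtual fibering, which supplies a finite-index subgroup lying in a class where $\MD$ is actually established.
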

\begin{proof}
  By Agol's virtual fibre theorem~\cite{agol,friedlkitayama}, there is
  a finite covering~$N$ of~$W$ that fibers over~$S^1$.  Then,
  $\pi_1(N)$ is a semidirect product of~$\Lambda$ and~$\Z$,
  where~$\Lambda$ is the fundamental group of an oriented compact
  surface.  For residually finite groups, the property~$\EMD*$ is
  equivalent to the property~$\MD$~\cite[Theorem~1.4]{td}, which is a
  property related to profinite completions introduced by
  Kechris~\cite[p.~464]{kechris}.  Since surface groups
  satisfy~$\MD$~\cite[Theorem~1.4]{bowen} and also free groups
  satisfy~$\MD$~\cite[Theorem~1]{kechris}, also~$\Lambda$
  satisfies~$\MD$.  By taking~$\Lambda$ as normal subgroup
  of~$\pi_1(N)$, we apply $\MD$-inheritance~\cite[Theorem~1.4]{bowen}
  to see that~$\pi_1(N)$ satisfies~$\MD$ and as a residually finite
  group also satisfies~$\EMD*$.  For residually finite groups the
  property~$\EMD*$ is preserved under passing from a finite index
  subgroup to the ambient group and therefore, $\pi_1(W)$
  satisfies~$\EMD*$.
\end{proof}
  
\subsection{Proof of Corollary~\ref{cor:hyp3rel}}\label{sec:proof:cor:prof}

After these preparations, we can derive Corollary~\ref{cor:hyp3rel} from Theorem~\ref{thm:hyp3}:

\begin{proof}[Proof of Corollary~\ref{cor:hyp3rel}]
  On the one hand, we have (Proposition~\ref{prop:boundaryreal} and
  Remark~\ref{rem:simvolhypclassical})
  \[ \ifsv{W,\partial W}^{\pfc{\pi_1(W)}}_\partial
     \geq \sv{W,\partial W} = \frac{\vol(M)}{v_3}.
  \]

  On the other hand, we can argue as follows: The proportionality principle
  for finite-volume hyperbolic $3$-manifolds (Theorem~\ref{thm:hyp3}) and
  Proposition~\ref{prop:boundary:control:inequality} show that
  \[ \ifsv{W,\partial W}_\partial
     \leq \ifsv {M} _\lf
     = \frac{\vol(M)}{v_3}.
  \]
  Moreover, $\pi_1(W)$ satisfies~$\EMD*$ (Proposition~\ref{prop:emd})
  and $\ifsv {W, \partial W}_\partial$ can be computed via ergodic
  spaces (Proposition~\ref{prop:ergodicsuff}). Therefore, monotonicity
  of parametrised simplicial volume with respect to weak containment
  (Proposition~\ref{prop:weakcont}) shows that
  \[ \ifsv{W,\partial W}^{\pfc{\pi_1(W)}}_\partial
     \leq \ifsv{W,\partial W}_\partial
     \leq \frac{\vol(M)}{v_3}.
     \qedhere
  \]
\end{proof}

\subsection{Summary of the relative case}

We conclude this section by showing that all the integral foliated
variations of the simplicial volume of compact hyperbolic
$3$-manifolds with toroidal boundary agree. Moreover, they will also
provide the same value as the ordinary simplicial volume and the
\emph{ideal simplicial volume}. Recall that the ideal simplicial
volume $\lVert \cdot \rVert_\mathcal{I}$ is a homotopy invariant of
compact manifolds~\cite{FMo:ideal}. It differs from the standard simplicial
volume by allowing the presence of \emph{ideal} simplices in
representatives of the fundamental class.

\begin{cor}\label{cor:all:simpl:vol:equal}
  Let $W$ be an oriented compact connected hyperbolic $3$-mani\-fold
  with empty or toroidal boundary. If $M$ denotes the the interior of
  $W$, then all the following versions of relative simplicial volume
  equal~$\frac{\vol(M)}{v_3}$:
  $$
  \ifsv{W, \partial W}_\partial^{\pfc{\pi_1(W)}} , \, \ifsv{W, \partial W}_\partial ,\,  \ifsv{W, \partial W} ,\, \lVert W, \partial W \rVert  ,\, \lVert W, \partial W \rVert_\mathcal{I}.
  $$
\end{cor}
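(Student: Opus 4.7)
The plan is to bracket all five quantities between $\vol(M)/v_3$ from below and $\vol(M)/v_3$ from above, using only results already established in the paper.

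For the four $\ell^1$-type volumes, I would assemble the chain
\[
  \sv{W,\partial W}
  \leq \ifsv{W,\partial W}
  \leq \ifsv{W,\partial W}_\partial
  \leq \ifsv{W,\partial W}_\partial^{\pfc{\pi_1(W)}}.
\]
Here the first inequality is Proposition~\ref{prop:boundaryreal} (integration of coefficients applied to an infimising parameter space), the second is the inequality recorded immediately after the definition of boundary-controlled integral foliated simplicial volume, and the third is obtained by taking the infimum over all standard $\Pi(W)$-spaces in the definition of $\ifsv{W,\partial W}_\partial$. The left endpoint equals $\vol(M)/v_3$ by the classical proportionality principle for compact hyperbolic $3$-manifolds with toroidal boundary (Remark~\ref{rem:simvolhypclassical}), while the right endpoint equals $\vol(M)/v_3$ by Corollary~\ref{cor:hyp3rel}. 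The four quantities in between are therefore all forced to equal $\vol(M)/v_3$.

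For the ideal simplicial volume $\lVert W,\partial W\rVert_{\mathcal I}$, I would argue by a two-sided sandwich. On one hand, every ordinary relative fundamental cycle qualifies as an ideal representative (with no ideal vertices), yielding $\lVert W,\partial W\rVert_{\mathcal I} \leq \lVert W,\partial W\rVert = \vol(M)/v_3$. On the other hand, an ideal fundamental cycle can be straightened to a chain of geodesic ideal simplices without increasing its $\ell^1$-norm, and since every ideal geodesic $3$-simplex in $\Hyp^3$ has hyperbolic volume at most $v_3$, integrating the volume form against the straightened cycle forces $\lVert W,\partial W\rVert_{\mathcal I} \geq \vol(M)/v_3$. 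Both ingredients are already available in the Frigerio--Moraschini framework \cite{FMo:ideal}, so this step reduces to citing the appropriate lemmas there.

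The main intellectual work has already been done in Corollary~\ref{cor:hyp3rel}, so the present corollary is essentially bookkeeping. The only mild subtlety is the ideal case, where one must make sure that geodesic straightening and volume-form integration behave well in the presence of toroidal cusps — but this is precisely the situation treated in \cite{FMo:ideal}, so no genuine new obstacle arises.
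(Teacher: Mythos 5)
Your proposal is correct and follows essentially the same route as the paper: a sandwich between the classical relative proportionality (Remark~\ref{rem:simvolhypclassical}) and Corollary~\ref{cor:hyp3rel}, with the ideal case deferred to~\cite{FMo:ideal} exactly as the paper does. The only cosmetic difference is that you cap the chain with $\ifsv{W,\partial W}_\partial^{\pfc{\pi_1(W)}}$ directly, whereas the paper caps it with $\ifsvlf M$ via Proposition~\ref{prop:boundary:control:inequality} and Theorem~\ref{thm:hyp3} and treats the profinite-parametrised quantity separately; since Corollary~\ref{cor:hyp3rel} rests on those same results, the two arrangements are logically equivalent.
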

\begin{proof}
  The interior~$M$ is a complete finite-volume hyperbolic
  manifold. Recall by Proposition~\ref{prop:boundaryreal} and
  Remark~\ref{rem:simvolhypclassical} that
  $$
  \ifsv{W, \partial W} \geq \lVert W, \partial W \rVert = \frac{\vol(M)}{v_3} .
  $$
  Thus, applying Proposition~\ref{prop:boundary:control:inequality}
  and Theorem~\ref{thm:hyp3}, it follows that
  $$
  \frac{\vol(M)}{v_3} = \ifsvlf M \geq \ifsv{W, \partial W}_\partial \geq \ifsv{W, \partial W} \ge \frac{\vol(M)}{v_3} .
  $$

  The proportionality principle for~$\ifsv{W, \partial
    W}_\partial^{\pfc{\pi_1(W)}}$ is the content of
  Corollary~\ref{cor:hyp3rel}. For the ideal simplicial volume~$\lVert
  W, \partial W \rVert_\mathcal{I}$, it has been shown before by
  Frigerio and the third author~\cite{FMo:ideal}.
\end{proof}

\section{The glueing step}\label{sec:glue}

Our computation of parametrised simplicial volume of
complete finite-volume hyperbolic $3$-manifolds includes boundary
control (Corollary~\ref{cor:hyp3rel}). This allows us to prove the
following upper bound.

\begin{thm}\label{thm:jsjupperbound}
  Let $M$~be an oriented compact connected $3$-manifold with empty or
  toroidal boundary. If $M$~is prime and not covered by~$S^3$, then
  \[ \stisv{M,\partial M}
     = \ifsv{M,\partial M}^{\pfc{\Pi(M)}}
     \leq \frac{\hypvol(M)}{v_3}.
  \]
\end{thm}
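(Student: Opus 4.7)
The equality $\stisv{M,\partial M} = \ifsv{M,\partial M}^{\pfc{\Pi(M)}}$ is Proposition~\ref{prop:ifsvprofin}, so only the upper estimate requires attention. The plan is to decompose $M$ via its JSJ decomposition, construct parametrised relative fundamental cycles with small boundary on each piece separately, and then assemble them into a global cycle using the parametrised uniform boundary condition on tori.

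If $M \cong S^1 \times S^2$, the unique prime oriented $3$-manifold that is not irreducible and not covered by~$S^3$, then $M$ is Seifert-fibered with $\hypvol(M) = 0$, and the known vanishing $\stisv{M} = 0$ from~\cite{FFL} applies. Otherwise $M$ is irreducible and aspherical. I would cut $M$ along its JSJ tori $T_1, \ldots, T_k$ into pieces $W_1, \ldots, W_m$, each of which is either a compact hyperbolic $3$-manifold with toroidal boundary or a Seifert-fibered manifold not covered by~$S^3$. For each hyperbolic piece, Corollary~\ref{cor:hyp3rel} yields $\ifsv{W_i,\partial W_i}_\partial^{\pfc{\pi_1(W_i)}} = \vol(W_i^\circ)/v_3$, whose sum over the hyperbolic pieces is exactly $\hypvol(M)/v_3$. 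For each Seifert-fibered piece, the vanishing $\ifsv{W_i,\partial W_i}^{\pfc{\pi_1(W_i)}} = 0$ from~\cite[Section~8]{LP}\cite{FFL} combined with Lemma~\ref{lem:boundaryvanishing} gives $\ifsv{W_i,\partial W_i}_\partial^{\pfc{\pi_1(W_i)}} = 0$.

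Given $\varepsilon > 0$, on each piece $W_i$ I would choose a relative $\pfc{\Pi(W_i)}$-parametrised fundamental cycle $c_i$ whose $\ell^1$-norm is within $\varepsilon/m$ of its value above, and whose boundary has $\ell^1$-norm at most $\delta$ for some $\delta > 0$ to be fixed at the end. Two tasks then remain. First, the parameter spaces $\pfc{\Pi(W_i)}$ must be combined into a single standard $\Pi(M)$-space weakly contained in $\pfc{\Pi(M)}$, so that Proposition~\ref{prop:weakcont} allows the resulting global cycle to be read as an upper bound for $\ifsv{M,\partial M}^{\pfc{\Pi(M)}}$; this is where the profinite compatibility of the JSJ decomposition (Section~\ref{subsec:profin3}) enters, ensuring that the inclusions $\pi_1(W_i) \hookrightarrow \pi_1(M)$ remain well-behaved after profinite completion. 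Second, on each JSJ torus $T_j$, the boundary contributions coming from the two adjacent pieces form a parametrised cycle on $T_j$ whose $\ell^1$-norm is at most $2\delta$; since $T_j$ is a torus, the parametrised uniform boundary condition from~\cite{fauserloehUBC} yields a filling chain $b_j$ with $|b_j|_1 \leq K_j \cdot 2\delta$ for a constant $K_j$ independent of~$\delta$. Subtracting the $b_j$ from the appropriate $c_i$ and summing produces a relative $\pfc{\Pi(M)}$-parametrised fundamental cycle of $M$ whose $\ell^1$-norm is at most $\hypvol(M)/v_3 + \varepsilon + 2\delta \sum_j K_j$. Choosing $\delta$ small enough and letting $\varepsilon \to 0$ gives the theorem.

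The main obstacle is the preservation of parameter-space control during the glueing: a priori the $c_i$ live over genuinely different parameter spaces $\pfc{\Pi(W_i)}$, and one must upgrade them to share coefficients in a common $\Pi(M)$-space weakly contained in $\pfc{\Pi(M)}$. Everything else is combinatorial bookkeeping over the finitely many JSJ tori and a standard application of the UBC, but this compatibility step is what forces one to record the profinite behaviour of the JSJ decomposition as a separate ingredient rather than relying on a naive Mayer--Vietoris type argument.
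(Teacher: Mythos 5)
Your proposal is correct and follows essentially the same route as the paper: Proposition~\ref{prop:ifsvprofin} for the equality, the parametrised UBC glueing estimates along the JSJ tori (Propositions~\ref{prop:gluetorus} and~\ref{prop:selfgluetorus}), the profinite compatibility of the JSJ decomposition combined with weak-containment monotonicity (Proposition~\ref{prop:jsjrestriction} via Proposition~\ref{prop:weakcont}) to reconcile the parameter spaces, Corollary~\ref{cor:hyp3rel} for the hyperbolic pieces, the vanishing for Seifert-fibered pieces together with Lemma~\ref{lem:boundaryvanishing}, and a separate treatment of~$S^1 \times S^2$. The paper merely organises these same ingredients top-down, working throughout with the single space~$\pfc{\Pi(M)}$ restricted to the pieces and only afterwards comparing with~$\pfc{\Pi(W)}$, instead of upgrading piecewise cycles to a common parameter space as you phrase it.
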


The proof is based on the JSJ decomposition. We recall
terminology and notation around the JSJ decomposition in
Section~\ref{subsec:jsj}. The fundamental glueing estimates
are established in Section~\ref{subsec:glue}, specifics of
the $3$-manifold situation are discussed in
Section~\ref{subsec:profin3}, and the proof of
Theorem~\ref{thm:jsjupperbound} is given in
Section~\ref{subsec:proofjsj}.
Finally, we use Theorem~\ref{thm:jsjupperbound} to prove our main
Theorem~\ref{thm:main} in Section~\ref{subsec:proofmainthm}.

\subsection{The JSJ decomposition}\label{subsec:jsj}

In this section, we state one of the key ingredients allowing us to
assemble Corollary~\ref{cor:hyp3rel} (which pertains to
\emph{hyperbolic} $3$-manifolds of finite volume) and the
computations for \emph{Seifert-fibered} manifolds not covered
by~$S^3$~\cite[Section~8]{LP}\cite{FFL}
into a statement about \emph{prime} manifolds not covered
by~$S^3$. Notice that along this section we will only work with
irreducible manifolds. Indeed, our argument does not apply to~$S^1
\times S^2$. Howeover, it is known that $S^1 \times S^2$ satisfies
the integral approximation~\cite{Sthesis}\cite{LP}.

Most of the results presented here are by now classical, and can be
found, for example, in Aschenbrenner-Friedl-Wilton's
compendium~\cite{afw} and Martelli's book~\cite{Martelli_GT}.

We first remind the reader of some standard terminology: 
\emph{Seifert-fibered} manifolds are compact $3$-manifolds that admit
a certain type of decomposition into disjoint circles~\cite{afw}.
An oriented compact connected $3$-manifold~$M$ is 
\emph{atoroidal} if every $\pi_1$-injective continuous map~$S^1 \times S^1 \to M$
can be homotoped into~$\partial M$.

We can now state the JSJ decomposition Theorem.

\begin{thm}[JSJ decomposition~\protect{\cite[Theorem~1.6.1]{afw}}]
	Let $M$~be an oriented compact connected $3$-manifold with
        empty or toroidal boundary. If $M$~is irreducible, then there
        is a (possibly empty) collection of disjointly embedded
        tori~$T_1, \ldots, T_m \subset M$, such that each piece
        obtained by cutting~$M$ along~$\bigcup_{i} T_i$ is atoroidal
        or Seifert-fibered. Up to isotopy, there is a unique such
        collection with minimal number of tori.
\end{thm}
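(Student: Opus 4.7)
The plan is to obtain the decomposition via a maximal collection of essential tori, with existence controlled by Kneser--Haken finiteness and uniqueness controlled by the Jaco--Shalen--Johannson characteristic submanifold theory.

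First I would call an embedded torus $T \subset M$ \emph{essential} if it is incompressible (i.e., $\pi_1$-injective) and not parallel to a component of $\partial M$; in the orientable setting every such torus is automatically two-sided. The Kneser--Haken finiteness theorem gives, for our irreducible compact $M$, a uniform upper bound on the size of any family of pairwise disjoint pairwise non-parallel incompressible surfaces. Thus one can choose a \emph{maximal} finite family $\{T_1,\dots,T_m\}$ of pairwise disjoint pairwise non-parallel essential tori. Cutting $M$ along $\bigcup_i T_i$ produces finitely many compact irreducible $3$-manifolds $M_1,\dots,M_k$, each with toroidal boundary.

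The main work is to verify that every piece $M_j$ is atoroidal or Seifert-fibered. If $M_j$ contains an essential torus $T'$, then $T'$ is also essential in $M$ and disjoint from $\bigcup_i T_i$, so by maximality $T'$ is parallel \emph{in $M$} to some $T_i$ or to a component of $\partial M$. The main obstacle is to promote this global parallelism information into a Seifert structure on the entire piece $M_j$. This is precisely the content of the Torus Theorem together with the Characteristic Submanifold Theorem of Jaco--Shalen and Johannson: an irreducible compact orientable $3$-manifold with toroidal boundary containing an essential torus not boundary-parallel is either Seifert-fibered, or admits a further essential torus non-parallel to the first, and an induction on complexity (bounded by Kneser--Haken) must terminate in a Seifert-fibered piece. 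The technical heart is Johannson's deformation theorem for homotopy equivalences of Haken manifolds, which underlies the dichotomy and rules out hybrid configurations.

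For uniqueness up to isotopy with minimal $m$, I would compare two valid collections after putting them in transverse general position. Innermost-disc and annulus arguments combined with incompressibility of the tori allow one to remove intersections without increasing the number of tori. In each atoroidal piece every essential torus is boundary-parallel, so spare tori can be isotoped away; in each Seifert piece every essential torus is isotopic to a vertical torus, and the vertical tori in a Seifert fibration have a combinatorially rigid structure, which pins down the collection. Minimality of $m$ eliminates redundant boundary-parallel components, yielding the unique minimal system up to ambient isotopy.
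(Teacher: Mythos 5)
The paper does not prove this statement at all: it is quoted verbatim from Aschenbrenner--Friedl--Wilton (Theorem~1.6.1 there) and used as a black box, so there is no internal proof to compare against. Judged on its own, your outline follows the standard Jaco--Shalen--Johannson route (Kneser--Haken finiteness for existence, characteristic submanifold theory for uniqueness), which is the right skeleton, but two steps have genuine gaps as written. First, cutting along a \emph{maximal} family of disjoint, pairwise non-parallel essential tori only yields pieces that are \emph{geometrically} atoroidal (every embedded incompressible torus is boundary-parallel), whereas the statement -- and the paper's definition -- is homotopical: every $\pi_1$-injective map $S^1\times S^1\to W$ homotopes into $\partial W$. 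Bridging that gap is exactly the Torus Theorem / Seifert fibre space theorem, and your description of this step is off: after cutting along a maximal family there are \emph{no} further embedded essential tori to run an ``induction on complexity'' on, so the induction you describe never gets started; what is actually needed is the statement that a geometrically atoroidal piece carrying an essential \emph{singular} torus is Seifert fibered, which for pieces with boundary is Haken-manifold theory, but in the closed case with empty family (non-Haken $M$) requires the deep work of Casson--Jungreis and Gabai (or geometrization), not an innermost-torus induction.

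Second, the uniqueness paragraph largely restates what has to be proved. The assertion that every essential torus in a Seifert piece is isotopic to a vertical one is false in general (closed Seifert manifolds with zero Euler number, e.g.\ torus bundles with periodic monodromy, contain horizontal essential tori), and the small exceptional pieces ($T^2\times[0,1]$, the orientable twisted $I$-bundle over the Klein bottle) need separate treatment. More importantly, showing that two \emph{minimal} valid collections can be ambiently isotoped onto each other is not achieved by discarding ``redundant boundary-parallel components'': one must characterise the canonical tori intrinsically (e.g.\ tori off which every essential torus and annulus can be isotoped) and deal with the phenomenon that a torus separating two Seifert pieces whose fibrations match across it can be deleted, which is also why a maximal family is generally strictly larger than the JSJ family. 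These are precisely the points carried by Johannson's and Jaco--Shalen's machinery, so citing it is legitimate -- but then the argument is essentially an appeal to the theorem being proved, on par with the paper's own decision to simply cite the literature.
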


In keeping with tradition, we have stated the JSJ decomposition
Theorem for irreducible manifolds, although it also applies to~$S^1
\times S^2$, the only prime manifold (without spherical boundary
components) that is not irreducible. Indeed, $S^1 \times S^2$~is
atoroidal and Seifert-fibered.

The atoroidal pieces in the JSJ decomposition Theorem are, as we will
now explain, suited to the methods developed throughout
Section~\ref{sec:hyprel}. Indeed, the Hyperbolisation Theorem
\cite[Theorem~1.7.5]{afw} ensures that every piece that is not
Seifert-fibered is either hyperbolic, or has finite fundamental
group. In our situation, we can however rule out the latter
possibility, because every piece with finite fundamental group would
have to be closed (and in particular, the only piece in the JSJ
decomposition) and hence, by the Elliptisation Theorem
\cite[Theorem~1.7.3]{afw}, covered by~$S^3$. We are excluding such
manifolds from our main results by hypothesis.

\begin{defn}[$\hypvol$]\label{def:hypvol}
	Let $M$ be an oriented compact connected $3$-manifold with
        empty or toroidal boundary. If $M$ is irreducible, we denote
        by~$\hypvol(M)$ the sum of the volumes of the hyperbolic
        pieces in the JSJ decomposition of~$M$. We extend this
        definition to prime manifolds by setting $\hypvol(S^1 \times
        S^2)\coloneqq0$.
\end{defn}

\subsection{Basic glueing estimates}\label{subsec:glue}

As in the case of vanishing parametrised simplicial
volume~\cite[Propositions~4.4 and~4.5]{FFL}, we can prove the
following glueing estimates for glueings along tori. At this
point it is essential that we have control over the boundary
contributions. 

\begin{prop}[glueing estimate]\label{prop:gluetorus}
	Let $W$~be an oriented compact connected $n$-manifold with
        $n\ge2$, and let $\alpha$~be an essentially free standard
        $\Pi(W)$-space. Let $T\subset W$ be an embedded $(n-1)$-torus
        that separates $W$ into two pieces~$W_1, W_2$.  For each $i
        \in \{1,2\}$, assume the inclusion $T \hookrightarrow W_i$ as
        a boundary-component is $\pi_1$-injective and denote
        by~$\alpha_i$ the restriction of~$\alpha$ to~$W_i$.  Then
	  \[ \ifsv {W, \partial W}^{\alpha}_\partial
	     \leq \ifsv {W_1,\partial W_1}^{\alpha_1}_\partial
	        + \ifsv {W_2,\partial W_2}^{\alpha_2}_\partial.
	  \]
\end{prop}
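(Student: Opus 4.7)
The plan is to mimic the strategy used in the vanishing case for graph manifolds in [FFL], but now tracking the actual norms (not just vanishing) and making crucial use of the uniform boundary condition for tori in the parametrised setting from [fauserloehUBC].

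Fix $\varepsilon \in \R_{>0}$. First I would choose, for each $i \in \{1,2\}$, a relative $\alpha_i$-fundamental cycle $c_i \in C_n(W_i; \linfz{\alpha_i})$ of $(W_i, \partial W_i)$ with
\[ |c_i|_1 \leq \ifsv{W_i, \partial W_i}^{\alpha_i}_\partial + \varepsilon
   \qand
   |\partial c_i|_1 \leq \delta,
\]
where $\delta > 0$ is a small constant to be chosen later. Since $T$ is closed and lies in the interior of $W$, and $\partial W \cap T = \emptyset$, we can split $\partial c_i = z_i^T + z_i^\partial$ according to the (disjoint) supports on $T$ and on $\partial W \cap W_i$; both summands are then separately cycles.

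The key homological observation is that, via the connecting homomorphism $H_n(W_i, \partial W_i; \linfz{\alpha_i}) \to H_{n-1}(\partial W_i; \linfz{\alpha_i|_{\partial W_i}})$, the class $[z_i^T]$ equals the $\alpha|_T$-parametrised fundamental class of $T$, with the orientation induced by $W_i$. These two boundary orientations are opposite, so $[z_1^T + z_2^T] = 0$ in $H_{n-1}(T; \linfz{\alpha|_T})$ (here I use $\pi_1$-injectivity of $T \hookrightarrow W_i$ to identify $\alpha_i|_T \cong \alpha|_T$ canonically from both sides). Now apply the parametrised uniform boundary condition for tori established in [fauserloehUBC]: since $\alpha|_T$ is essentially free (because $\alpha$ is), there exists a constant $K = K(T, n)$ and a chain $b \in C_n(T; \linfz{\alpha|_T})$ with
\[ \partial b = z_1^T + z_2^T
   \qand
   |b|_1 \leq K \cdot |z_1^T + z_2^T|_1 \leq K \cdot (|\partial c_1|_1 + |\partial c_2|_1) \leq 2K\delta.
\]

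Viewing everything inside $C_*(W; \linfz \alpha)$, set $c \coloneqq c_1 + c_2 - b$. Then
\[ \partial c = z_1^\partial + z_2^\partial,
\]
which is supported on $\partial W$ and has norm at most $2\delta$. A Mayer--Vietoris argument (or direct verification using the decomposition of the fundamental class $[W, \partial W]^\alpha$ into $[W_i, \partial W_i]^{\alpha_i}$) shows $c$ is a relative $\alpha$-fundamental cycle of $(W, \partial W)$, and
\[ |c|_1 \leq |c_1|_1 + |c_2|_1 + 2K\delta
   \leq \ifsv{W_1, \partial W_1}^{\alpha_1}_\partial + \ifsv{W_2, \partial W_2}^{\alpha_2}_\partial + 2\varepsilon + 2K\delta.
\]
Choosing $\delta$ small enough that $2\delta \leq \varepsilon'$ (for any prescribed boundary tolerance $\varepsilon' > 0$) and $2K\delta \leq \varepsilon$ yields the claim upon letting $\varepsilon, \varepsilon' \to 0$ in the definition of $\ifsv{W, \partial W}^\alpha_\partial$.

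I expect the main obstacle to be the bookkeeping for parameter spaces and coefficient systems: one must verify that the restriction $\alpha|_T$ is unambiguously defined (independently of approaching $T$ from $W_1$ or $W_2$), which is exactly where $\pi_1$-injectivity of $T \hookrightarrow W_i$ enters, and that essential freeness of $\alpha|_T$ is available so that the parametrised UBC for tori from [fauserloehUBC] applies. A secondary technical point is the Mayer--Vietoris identification of the class of $c$ with $[W, \partial W]^\alpha$ in the local-coefficient setting; this is a routine extension of the integer case, provided one is careful with the signs governing the induced orientations of $T$ as a boundary component of $W_1$ versus $W_2$.
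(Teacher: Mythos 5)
Your proposal is correct and follows essentially the same route as the paper: efficient relative cycles with small boundary on each piece, the null-homologous cycle on $T$, the parametrised uniform boundary condition for tori to fill it with controlled norm, and then glueing and passing to the limits in $\varepsilon$ and the boundary tolerance. The only cosmetic difference is that the paper verifies that $c_1 + c_2 - b$ is a relative $\alpha$-fundamental cycle by citing the local criterion of [FFL, Proposition~3.13] rather than a Mayer--Vietoris argument, and uses $\pi_1$-injectivity precisely to guarantee essential freeness of the restricted parameter space on $T$, as you anticipated.
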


\begin{proof}
  We proceed as in the proof with vanishing parametrised simplicial
  volume~\cite[proof of Proposition~4.4]{FFL}.  Let $\varepsilon,
  \varepsilon_\partial>0$. For each~$i \in \{1,2\}$, let $c_i \in
  C_n(W_i;\linfz{\alpha_i})$ be a relative fundamental cycle
  of~$(W_i,\partial W_i)$ satisfying
  \[|c_i|_1 \leq \ifsv{W_i,\partial W_i}^{\alpha_i}_\partial + \varepsilon
      \qand |\partial c_i|_1 \leq \varepsilon_\partial.\]

  Denoting by~$\alpha_0$ the restriction of~$\alpha$ to~$T$, we see
  that $c_0 \coloneqq (\partial c_1)|_{T} + (\partial c_2)|_{T}$ is a
  null-homologous cycle in~$C_{n-1}(T;\linfz{\alpha_0})$ with $|c_0|_1
  \leq |\partial c_1|_1 + |\partial c_2|_1 \leq 2 \cdot
  \varepsilon_\partial$.
  
  As $\alpha_0$~is essentially free, there is an $(n-1)$-UBC constant
  $K >0$ for $C_*(T;
  \linfz{\alpha_0})$~\cite[Proposition~4.1]{FFL}\cite[Theorem~1.3]{fauserloehUBC}.
  This means there is a chain~$b \in C_n(T;\linfz{\alpha_0})$ with
  \[ \partial b = c_0
     \qand |b|_1 \leq K \cdot |c_0|_1 \leq K \cdot 2 \cdot \varepsilon_\partial.
  \]
  Then the local criterion shows that
  $c \coloneqq c_1 + c_2 - b$ is a relative $\alpha$-parametrised
  fundamental cycle of~$(W,\partial W)$~\cite[Proposition~3.13]{FFL}. Moreover,
  \begin{align*}
    |c|_1
    & \leq \ifsv{W_1,\partial W_1}^{\alpha_1}_\partial + \ifsv{W_2,\partial W_2}^{\alpha_2}_\partial + 2 \cdot \varepsilon + K \cdot 2 \cdot \varepsilon_\partial
    \qand
    \\
    |\partial c|_1
    & \leq |\partial c_1|_1 + |\partial c_2|_1
    \leq 2 \cdot \varepsilon_\partial.
  \end{align*}
  Taking first $\varepsilon \to 0$ and then~$\varepsilon_\partial \to 0$,
  proves the claim.
\end{proof}

\begin{prop}[self-glueing estimate]\label{prop:selfgluetorus}
  Let $W$ be an oriented compact connected manifold of dimension~$n
  \geq 2$, let $T_1, T_2 \subseteq \partial W$ be two different
  $\pi_1$-injective components of~$\partial W$ that are homeomorphic
  to a torus, and let $f \colon T_1 \longrightarrow T_2$ be an
  orientation-reversing homeomorphism. We consider the glued
  manifold~$W' \coloneqq W / (T_1 \sim_f T_2)$ and an essentially free
  standard $\Pi(W)$-space~$\alpha$ as well as the induced standard
  $\Pi(W')$-space~$\alpha'$ on~$W'$. Then
  \[ \ifsv{W',\partial W'}^{\alpha'}_\partial
     \leq \ifsv{W,\partial W}^\alpha_\partial.
  \]
\end{prop}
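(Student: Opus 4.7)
The plan is to adapt the argument of Proposition~\ref{prop:gluetorus} to the self-glueing setting: push a near-optimal relative fundamental cycle of~$W$ forward to~$W'$, and then use the uniform boundary condition on the internal torus to cancel the residual boundary contribution.

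More precisely, I would fix $\varepsilon, \varepsilon_\partial \in \R_{>0}$ and choose a relative $\alpha$-parametrised fundamental cycle~$c \in C_n(W; \linfz \alpha)$ of~$(W,\partial W)$ with
\[ |c|_1 \leq \ifsv{W,\partial W}^\alpha_\partial + \varepsilon \qand |\partial c|_1 \leq \varepsilon_\partial. \]
Let $p \colon W \longrightarrow W'$ denote the glueing quotient. Because $\alpha'$~is induced from~$\alpha$ via~$p$, this map transports $\linfz \alpha$-coefficients to $\linfz{\alpha'}$-coefficients, producing a chain~$c' \coloneqq p_* c \in C_n(W'; \linfz{\alpha'})$ with $|c'|_1 \leq |c|_1$. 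The boundary $\partial c'$ splits as a part supported in~$\partial W'$ (coming from $\partial W \setminus (T_1 \cup T_2)$) and a part~$c_0$ supported on the internal torus $T \coloneqq p(T_1) = p(T_2)$, collecting the contributions of $(\partial c)|_{T_1}$ and $f_*\bigl((\partial c)|_{T_2}\bigr)$. One has $|c_0|_1 \leq |\partial c|_1 \leq \varepsilon_\partial$, and $c_0$~is a cycle because $\partial c$~is. Since $f$~is orientation-reversing and $W'$~is oriented, the two contributions to~$c_0$ represent opposite classes in $H_{n-1}(T; \linfz{\alpha'|_T})$, whence $[c_0] = 0$.

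Next, $\pi_1$-injectivity of $T_1 \hookrightarrow W$ combined with essential freeness of~$\alpha$ gives essential freeness of~$\alpha'|_T$, so the uniform boundary condition \cite[Proposition~4.1]{FFL}\cite[Theorem~1.3]{fauserloehUBC} applies on~$T$: there exist a constant~$K \in \R_{>0}$ and a chain~$b \in C_n(T; \linfz{\alpha'|_T})$ with $\partial b = c_0$ and $|b|_1 \leq K \cdot |c_0|_1 \leq K \cdot \varepsilon_\partial$. Setting $c'' \coloneqq c' - b \in C_n(W'; \linfz{\alpha'})$, we have $\partial c'' = \partial c' - c_0$, which is supported in~$\partial W'$; the local criterion for relative fundamental cycles (analogous to \cite[Proposition~3.13]{FFL}) then guarantees that $c''$ is a relative $\alpha'$-parametrised fundamental cycle of~$(W',\partial W')$. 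The estimates
\[ |c''|_1 \leq \ifsv{W,\partial W}^\alpha_\partial + \varepsilon + K \cdot \varepsilon_\partial \qand |\partial c''|_1 \leq \varepsilon_\partial \]
yield the claim upon letting first $\varepsilon \to 0$ and then $\varepsilon_\partial \to 0$.

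The principal obstacle will be the bookkeeping for the induced coefficient system~$\alpha'$: making precise the pushforward $p_*$ at the level of $\linfz{\args}$-coefficients, verifying that $\alpha'|_T$~inherits essential freeness from $\alpha$ via the $\pi_1$-injectivity of~$T_1 \hookrightarrow W$ (so that UBC is indeed available on~$T$), and carefully substantiating the orientation argument that kills $[c_0]$ in $H_{n-1}(T;\linfz{\alpha'|_T})$. Once these ingredients are in place, the assembly follows the same template as Proposition~\ref{prop:gluetorus}.
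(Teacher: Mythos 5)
Your proposal is correct and is exactly the argument the paper intends: its proof of Proposition~\ref{prop:selfgluetorus} simply says to argue as in Proposition~\ref{prop:gluetorus}, and your adaptation (pushforward along the quotient map, the null-homologous cycle on the internal torus via the orientation-reversing glueing, UBC filling using essential freeness from $\pi_1$-injectivity, the local criterion, and the two-step limit) is precisely that adaptation spelled out.
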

\begin{proof}
  We can argue in the same way as in the proof of
  Proposition~\ref{prop:gluetorus}.
\end{proof}

\subsection{Profinite completions in dimension~$3$}\label{subsec:profin3}

Since the glueing results of the previous section involve restrictions
of parameter spaces, we will need to understand the effect of
restriction on the parameter spaces associated with the profinite
completion.

\begin{prop}\label{prop:jsjrestriction}
  Let $M$ be an irreducible oriented compact connected $3$-manifold
  with empty or toroidal boundary, and let $W$ be a piece of the JSJ
  de\-com\-po\-si\-tion of~$M$.
  \begin{enumerate}
  \item If $W$~is Seifert-fibered and not covered by~$S^3$, then
    \[ \ifsv{W,\partial W}^{\pfc{\Pi(M)}}_\partial = 0.\]
  \item If $W$ is hyperbolic, then
    \[ \ifsv{W,\partial W}^{\pfc{\Pi(M)}}_\partial
       \leq \ifsv{W,\partial W}^{\pfc{\Pi(W)}}_\partial.
    \]
  \end{enumerate}
  Here, the occurrences of~$\pfc{\Pi(M)}$ are
  to be interpreted as the restrictions of these standard
  $\Pi(M)$-spaces to~$\Pi(W)$.
\end{prop}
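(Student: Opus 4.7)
The plan is to reduce both parts to constructing a single morphism of standard $\Pi(W)$-spaces
\[
\Phi \colon \pfc{\Pi(M)}|_{\Pi(W)} \longrightarrow \pfc{\Pi(W)}.
\]
Contravariant functoriality of $L^\infty(-;\Z)$ transforms $\Phi$ into a $\Pi(W)$-equivariant norm non-increasing map of normed local coefficient systems, and hence into a norm non-increasing chain map between the associated relative chain complexes that carries $\pfc{\Pi(W)}$-parametrised relative fundamental cycles to $\pfc{\Pi(M)}|_{\Pi(W)}$-parametrised ones with matching boundary norms. This immediately yields
\[
\ifsv{W,\partial W}^{\pfc{\Pi(M)}}_\partial \leq \ifsv{W,\partial W}^{\pfc{\Pi(W)}}_\partial,
\]
which is part~(2). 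For part~(1), the hypotheses on the Seifert-fibered piece~$W$ combine with the cited work of L\"oh--Pagliantini and Fauser--Friedl--L\"oh to give $\stisv{W,\partial W} = 0$, which by Proposition~\ref{prop:ifsvprofin} equals $\ifsv{W,\partial W}^{\pfc{\Pi(W)}}$ and by Lemma~\ref{lem:boundaryvanishing} upgrades to $\ifsv{W,\partial W}^{\pfc{\Pi(W)}}_\partial = 0$. Substituting this into the inequality above yields the vanishing asserted in part~(1).

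To construct $\Phi$, fix a basepoint $x_0 \in W$ and, using the equivalence of Remark~\ref{rem:groupoidvsgroup}, reduce to producing a $\pi_1(W,x_0)$-equivariant measure-preserving map from $\pfc{\pi_1(M,x_0)}$, equipped with the $\pi_1(W,x_0)$-action by left translation through the inclusion $\iota_* \colon \pi_1(W,x_0) \hookrightarrow \pi_1(M,x_0)$, to $\pfc{\pi_1(W,x_0)}$ with its canonical action. The inclusion $\iota_*$ is injective because the JSJ tori are $\pi_1$-injective, and it induces a continuous homomorphism $\pfc{\iota_*} \colon \pfc{\pi_1(W)} \to \pfc{\pi_1(M)}$. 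By the subgroup separability of JSJ-piece fundamental groups in their ambient $3$-manifold group---a consequence of the combined work of Agol, Wise, and Wilton--Zalesskii, as documented in Aschenbrenner--Friedl--Wilton---the homomorphism $\pfc{\iota_*}$ is injective, and identifies $\pfc{\pi_1(W)}$ topologically with the closed subgroup $H \coloneqq \overline{\iota_*(\pi_1(W))}$ of $\pfc{\pi_1(M)}$. Since the $\pi_1(W)$-action on $\pfc{\pi_1(M)}$ factors through $H$, it suffices to produce an $H$-equivariant measure-preserving map $\pfc{\pi_1(M)} \to H$.

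For this last step, disintegrate the Haar measure on $\pfc{\pi_1(M)}$ along the partition into right $H$-cosets: standard measure-theoretic arguments yield a Borel $H$-equivariant isomorphism of probability spaces $\pfc{\pi_1(M)} \cong H \times (H \backslash \pfc{\pi_1(M)})$, in which $H$ acts by left translation on the first factor and trivially on the second. Projection onto the first factor is the desired $H$-equivariant measure-preserving map, and composing with the isomorphism $H \cong \pfc{\pi_1(W)}$ produces $\Phi$ after re-passing from the fundamental group to the fundamental groupoid. The main obstacle is the appeal to subgroup separability, i.e., to the injectivity of $\pfc{\iota_*}$: this is a deep property of $3$-manifold groups that rests on the virtual fibering theorem and the profinite analysis of JSJ decompositions. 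Once this is granted, the remaining measure-theoretic disintegration and functoriality steps are standard.
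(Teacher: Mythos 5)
Your argument is correct, and it shares the paper's two essential inputs but finishes differently. Like the paper, you rely on (a)~the fact that $\pfc{\pi_1(W)}$ embeds as a closed subgroup~$H$ of~$\pfc{\pi_1(M)}$ -- this is exactly Lemma~\ref{lem.efficient}, due to Wilton--Zalesskii; note that plain subgroup separability of~$\pi_1(W)$ in~$\pi_1(M)$ would \emph{not} suffice for injectivity of~$\pfc{\iota_*}$, what is needed is that $\pi_1(M)$ induces the full profinite topology on~$\pi_1(W)$ (``efficiency''), which is what the cited profinite JSJ analysis provides -- and (b)~the measure-theoretic product decomposition $\pfc{\pi_1(M)} \cong H \times (H\backslash \pfc{\pi_1(M)})$ with trivial action on the second factor, which is the paper's Lemma~\ref{lem.amplification}. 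Where you diverge is the last step: the paper only extracts from this decomposition a \emph{weak containment} $\pfc{\Pi(W)} \prec \pfc{\Pi(M)}$ and then invokes the boundary-controlled monotonicity result (Proposition~\ref{prop:weakcont}), whose proof is an approximation argument requiring essentially free parameter spaces and infinite fundamental group; you instead build an honest morphism of standard $\Pi(W)$-spaces $\pfc{\Pi(M)}|_{\Pi(W)} \to \pfc{\Pi(W)}$ (projection onto the $H$-factor) and push relative fundamental cycles through the dualised, norm-preserving coefficient map, which transports both the $\ell^1$-norm and the boundary norm on the nose. This is more direct: it bypasses the weak-containment machinery of the appendix and its freeness hypotheses altogether, at the cost of needing the genuine factor map rather than mere approximability -- which in this situation is available anyway. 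Part~(1) is handled exactly as in the paper, via $\stisv{W,\partial W}=0$ for non-elliptic Seifert-fibered pieces, Proposition~\ref{prop:ifsvprofin}, and Lemma~\ref{lem:boundaryvanishing}.
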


The proof relies on the following two facts:

\begin{lemma}\label{lem.efficient}
	Let $M$~be an irreducible oriented compact connected
        $3$-manifold with empty or toroidal boundary, let $W$~be a
        piece of the JSJ decomposition of~$M$, and choose a
        basepoint~$x_0$ for~$W$. Then the map~$\pfc{\pi_1(W, x_0)} \to
        \pfc{\pi_1(M, x_0)}$ induced by the inclusion~$W\to M$
        embeds~$\pfc{\pi_1(W, x_0)}$ as a closed subgroup
        of~$\pfc{\pi_1(M, x_0)}$.
\end{lemma}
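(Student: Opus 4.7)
The plan is to reduce the lemma to two well-studied properties and then invoke the relevant profinite rigidity results for $3$-manifold groups. Set $H\coloneqq\pi_1(W,x_0)$ and $G\coloneqq\pi_1(M,x_0)$, and let $\iota\colon H\to G$ be the map induced by the inclusion.

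First I would observe that the induced map $\pfc\iota\colon \pfc H\to \pfc G$ is automatically continuous, that $\pfc H$ is compact and $\pfc G$ is Hausdorff, so any continuous injection between them is automatically a topological embedding onto a closed subgroup. Hence the whole statement reduces to showing that $\pfc\iota$ is injective. By a standard translation, this injectivity is equivalent to the assertion that the profinite topology on $G$ restricted to~$H$ coincides with the intrinsic profinite topology on~$H$; equivalently, for every finite-index normal subgroup $K\trianglelefteq H$ there is a finite-index (normal) subgroup $L\trianglelefteq G$ with $L\cap H\subseteq K$.

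The hard part is establishing this ``efficiency'' of the JSJ decomposition at the level of profinite topologies. I would invoke the results of Wilton--Zalesskii for $3$-manifold groups: for an irreducible compact orientable $3$-manifold with empty or toroidal boundary, the JSJ splitting of $G=\pi_1(M,x_0)$ is efficient, meaning that $G$ acts on the Bass--Serre tree of its JSJ decomposition as a graph of residually finite groups whose vertex and edge groups are closed in the profinite topology on~$G$, and moreover the profinite topology of $G$ induces the full profinite topology on each vertex and edge subgroup. (For the hyperbolic pieces this uses Agol's virtual fibering and LERF-type results via the work of Agol, Wise, and Przytycki--Wise; for Seifert pieces it uses the subgroup separability results of Scott together with the Seifert structure.) Applied to the vertex subgroup $H$ corresponding to the chosen piece $W$, this gives exactly the condition needed for $\pfc\iota$ to be injective.

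Combining the two steps, $\pfc\iota\colon \pfc H\to \pfc G$ is an injective continuous homomorphism between a compact and a Hausdorff topological group, hence a closed embedding, which is what the lemma asserts. The only real content is the Wilton--Zalesskii efficiency theorem; the rest is the formal compactness/Hausdorff argument and the dictionary between injectivity of $\pfc\iota$ and compatibility of profinite topologies.
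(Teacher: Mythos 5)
Your proposal is correct and takes essentially the same route as the paper, which likewise deduces the lemma from Wilton--Zalesskii's efficiency of the graph-of-groups decomposition induced by the JSJ decomposition; you merely spell out the formal dictionary (injectivity of $\widehat{\iota}$ is equivalent to the profinite topology of $\pi_1(M,x_0)$ inducing the full profinite topology on $\pi_1(W,x_0)$, plus the compact-to-Hausdorff observation giving a closed embedding) that the paper leaves implicit. The only blemish is your parenthetical account of how efficiency is proved --- Wilton--Zalesskii's argument relies on separability results such as Hamilton's for abelian subgroups rather than on the Agol--Wise machinery --- but since you invoke the theorem as a black box this does not affect the validity of your proof.
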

\begin{proof}
  This statement is contained in the stronger fact that the profinite
  topology on~$\pi_1(M)$ is \emph{efficient} with respect to the
  graph-of-groups decomposition induced by the JSJ decomposition. We
  will not make further use of these technical notions, so we direct the
  reader to the original paper of Wilton and Zalesskii for the precise
  definitions and proofs~\cite{wilton2010profinite}.
\end{proof}
  
\begin{lemma}[{\cite[Example~12]{gheysens2017fixed}}]\label{lem.amplification}
	Let $G$~be a locally compact second-countable group and $H\le
        G$~a closed subgroup, both equipped with the left Haar
        measures~$\mu_G, \mu_H$, respectively.  Then, as an
        $H$-probability space, $G$~is isomorphic to the product of~$H$
        with a measured space carrying a trivial $H$-action.
\end{lemma}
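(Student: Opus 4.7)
The plan is to exhibit an explicit Borel isomorphism between $G$ and a product $H \times X$ that intertwines the left $H$-translation action on $G$ with the action on the first factor only, and then to check that, upon suitable normalisation of Haar measures, this isomorphism is measure-preserving. The only structural fact about $G$ and $H$ to be invoked beyond standard measure theory is that locally compact second-countable groups, and their closed subgroups, are standard Borel, so that the quotient $H\backslash G$ is itself a standard Borel space and admits a Borel transversal.

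First I would invoke Mackey's selection theorem to produce a Borel transversal $T \subset G$ for the left cosets of $H$, i.e.\ a Borel subset meeting each orbit $Hg$ in exactly one point (equivalently, a Borel section $s\colon H\backslash G \to G$ of the quotient map $p\colon G \to H\backslash G$, with $T = s(H\backslash G)$). Using such a transversal, I would define the multiplication map
\[
\Phi\colon H \times T \longrightarrow G, \qquad (h,t) \longmapsto h\cdot t,
\]
which is a Borel bijection by construction, with Borel inverse $g \mapsto \bigl(g\cdot s(p(g))^{-1},\, s(p(g))\bigr)$. The key structural observation is that the left $H$-action on $G$ transports across $\Phi$ to the action
\[
h_0 \cdot (h,t) = (h_0 h,\, t),
\]
which is by left translation on the first factor and trivial on the second factor.

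Next I would address the measure theory. Using the standard quotient integration formula for locally compact second-countable groups, one obtains a (quasi-)invariant measure $\nu$ on the homogeneous space $H\backslash G$ such that $\mu_G$ disintegrates along the fibres $Ht$ of $p$ as $\mu_H$, yielding the identification of measures $\Phi_*(\mu_H \otimes \nu) = \mu_G$. Transporting $\nu$ back to $T$ via the transversal gives a Borel measure on $T$ with respect to which $\Phi$ becomes a measure-preserving Borel isomorphism. Since the lemma is used only as an $H$-space statement, one may then normalise: restricting to the setting relevant in the paper, where the ambient measures are chosen so that the resulting factor measure on $H\backslash G$ is a probability measure (or, equivalently, localising to $G$-sets of finite $\mu_G$-measure and normalising), one obtains the desired isomorphism of $H$-probability spaces $G \cong H \times X$ with $X = (T,\nu)$ carrying the trivial $H$-action.

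The main subtlety is not the algebraic identification but the measure-theoretic one: one must know that the left Haar measure $\mu_G$ genuinely disintegrates as a product $\mu_H \otimes \nu$ across the transversal, which is exactly the content of the quotient integral formula for locally compact second-countable groups and their closed subgroups. Once this is in place, the argument is a direct combination of a Borel section, the product structure of $H\times T$, and the observation that left multiplication by $H$ leaves the $T$-coordinate fixed; hence a careful appeal to the literature (e.g.\ Bourbaki's treatment of Haar measure on homogeneous spaces, or the reference \cite{gheysens2017fixed} cited in the statement) will suffice for the measure-theoretic step, and the rest is essentially formal.
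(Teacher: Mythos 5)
Your proposal is correct, and it in fact supplies an argument where the paper gives none: the paper's ``proof'' of Lemma~\ref{lem.amplification} consists entirely of deferring to Gheysens--Monod's Example~12 (``$G$ is an amplification of $H$''), so what you have written is essentially a reconstruction of the content of that reference rather than a divergence from the paper. The skeleton --- a Borel transversal $T$ for $H\backslash G$ via Mackey's selection theorem, the Borel bijection $\Phi(h,t)=ht$ with explicit inverse, and the observation that left $H$-translation becomes $(h_0h,t)$ --- is exactly the right decomposition, and it is the only nontrivial structural input.

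One small imprecision worth flagging in your measure-theoretic step: the classical Weil quotient integral formula (for $G/H$ with inner integration $\int_H f(gh)\,d\mu_H(h)$) carries the obstruction $\Delta_G|_H=\Delta_H$, so ``the standard quotient integration formula'' is not quite the statement you want to quote. What you actually need --- that $\mu_G$ disintegrates over $p\colon G\to H\backslash G$ with fibres equal to (scalar multiples of) left Haar measure on each coset $Ht$ --- holds unconditionally: disintegrate the $\sigma$-finite measure $\mu_G$ over the $H$-invariant Borel map $p$; since $\mu_G$ is left-$H$-invariant, uniqueness of disintegration forces almost every fibre measure to be left-$H$-invariant on the free orbit $Ht\cong H$, hence a positive multiple of $(h\mapsto ht)_*\mu_H$, and the scalar can be absorbed into the base measure $\nu$ on $T$. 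This gives $\Phi_*(\mu_H\otimes\nu)=\mu_G$ with no unimodularity hypothesis. Finally, the normalisation issue you raise is moot in the paper's application: there $G=\pfc{\pi_1(M)}$ and $H=\pfc{\pi_1(W)}$ are profinite, hence compact, so $\mu_G$, $\mu_H$ and the induced $\nu$ are all finite and can be normalised to probability measures directly, which is the sense in which the lemma is used in the proof of Proposition~\ref{prop:jsjrestriction}.
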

\begin{proof}
  In the language of Gheysens and Monod~\cite[Example~12]{gheysens2017fixed},
  this lemma is expressed as the
  statement that $G$~is an \emph{amplification} of~$H$.
\end{proof}
  
We will also make use of the notion of \emph{weak containment} of
standard $G$-spaces (for $G$~a countable group or a groupoid with
countable automorphism groups), and its relationship to integral
foliated simplicial volume (with boundary control), as explained
in Appendix~\ref{subsec:weakcont}. The main result is 
Proposition~\ref{prop:weakcont}, which may be treated as a
black box during the proof of Proposition~\ref{prop:jsjrestriction}.

\begin{proof}[Proof of Proposition~\ref{prop:jsjrestriction}]
	Whether we are in situation (1) or~(2),
        Lemma~\ref{lem.efficient} ensures that, for any choice of
        basepoint (which we now suppress from the notation),
        $\pfc{\pi_1(W)}$~sits as a closed subgroup
        of~$\pfc{\pi_1(M)}$. Applying Lemma~\ref{lem.amplification}
        and restricting along the canonical map~$\pi_1(W) \to
        \pfc{\pi_1(W)}$ yields an isomorphism of standard
        $\pi_1(W)$-spaces
	\[\pfc{\pi_1(M)} \cong \pfc{\pi_1(W)} \times \alpha, \]
	where $\alpha$~is some standard $\pi_1(W)$-space with trivial action.
		
	It is now easy to see from Definition~\ref{def.weakcont} that this implies we
        have a weak containment of standard $\pi_1(W)$-spaces
        $\pfc{\pi_1(W)} \prec \pfc{\pi_1(M)}$, which extends to the
        level of groupoids:
	\[\pfc{\Pi(W)} \prec \pfc{\Pi(M)}.\]
	
	Applying now Proposition~\ref{prop:weakcont} immediately
        yields~(2), and reduces~(1) to the proof that Seifert-fibered
        spaces~$W$ that are not covered by~$S^3$ satisfy
        $\ifsv{W,\partial W}^{\pfc{\Pi(W)}}_\partial = 0$. Such
        manifolds are encompassed by earlier work~\cite[Section~8]{LP}\cite{FFL},
        whence $\stisv{W,\partial
          W}=0$. Now, Proposition~\ref{prop:ifsvprofin} tells us that
        $\ifsv{W,\partial W}^{\pfc{\Pi(W)}} = \stisv{W,\partial W}$,
        and by Lemma~\ref{lem:boundaryvanishing} this vanishing transfers
        to~$\ifsv{W,\partial W}_\partial^{\pfc{\Pi(W)}}$, finishing the proof.
\end{proof}

\subsection{Proof of Theorem~\ref{thm:jsjupperbound}}\label{subsec:proofjsj}

We only need to combine our previous considerations. If $M$ satisfies
the hypotheses in Theorem~\ref{thm:jsjupperbound} and is irreducible,
we have
\begin{align*}
  \stisv {M,\partial M}
  & = \ifsv {M,\partial M}^{\pfc{\Pi(M)}}
  & \text{(Proposition~\ref{prop:ifsvprofin})}
  \\
  & \leq \ifsv {M,\partial M}^{\pfc{\Pi(M)}}_\partial
  \\
  & \leq \sum_{\text{$W$ JSJ piece of~$M$}} \ifsv{W,\partial W}_\partial^{\pfc{\Pi(M)}}
  & \text{(Propositions~\ref{prop:gluetorus}, \ref{prop:selfgluetorus})}
  \\
  & \leq \sum_{\text{$W$ hyperbolic piece of~$M$}} \ifsv{W,\partial W}_\partial^{\pfc{\Pi(W)}}
  & \text{(Proposition~\ref{prop:jsjrestriction})}
  \\
  & = \sum_{\text{$W$ hyperbolic piece of~$M$}} \frac{\vol(W^\circ)}{v_3}
  & \text{(Corollary~\ref{cor:hyp3rel})}
  \\
  & = \frac{\hypvol(M)}{v_3}.
\end{align*}
Propositions \ref{prop:gluetorus} and~\ref{prop:selfgluetorus} can
be applied, because fundamental groups of compact $3$-manifolds are
residually finite~\cite{hempel} (whence the action on the profinite
completion is essentially free) and the JSJ~pieces have $\pi_1$-injective
boundary consisting of tori.

The additional case $M\cong S^1 \times S^2$ can, for instance, be
treated via self-coverings of~$S^1$ and
Proposition~\ref{prop:ifsvprofin}~\cite{LP,FFL,Sthesis}.

\subsection{Proof of Theorem~\ref{thm:main}}\label{subsec:proofmainthm}

Finally, we can prove the main theorem, Theorem~\ref{thm:main}: 
On the one hand, it is well known (Section~\ref{sec:intro:soma}) that 
\[ \stisv {M,\partial M} \geq \sv{M,\partial M} = \frac{\hypvol(M)}{v_3}.
\]
On the other hand, Theorem~\ref{thm:jsjupperbound} gives us the
converse estimate
\[ \stisv {M,\partial M} \leq \frac{\hypvol(M)}{v_3},
\]
which finishes the proof.

\section{Proofs of the non-approximation results}\label{sec:nonapprox}

We use the first $L^2$-Betti number to establish the
non-ap\-prox\-i\-ma\-tion results stated in the introduction.

\begin{repthm}{thm-noapprox}
  Let $d \in \N_{\geq 3}$, let $m,n \in \N$, let $M_1,\dots, M_m$, $N_1,\dots, N_n$
  be oriented closed connected $d$-manifolds with the following properties:
  \begin{enumerate}
  \item We have $\sv {M_j} > 0$  for all~$j \in
    \{1,\dots, m\}$ as well as $\sv {N_k} = 0$ for all~$k \in \{1,\dots, n\}$.
  \item Moreover, $m + n - 1 - \sum_{k=1}^n 1 / |\pi_1(N_k)| > \sum_{j=1}^m \sv {M_j}$ (with
  the convention that $1/\infty := 0$).
  \end{enumerate}
  Then the connected sum~$M \coloneqq M_1 \connsum \dots \connsum M_m \connsum N_1
  \connsum \dots \connsum N_n$ does \emph{not} satisfy integral approximation
  for simplicial volume, i.e., we have~$\sv M < \stisv M$.
\end{repthm}
\begin{proof}
  In dimension~$d \geq 3$, simplicial volume is additive under
  connected sums~\cite[p.~10]{Grom82}\cite{BBFIPP}. Therefore,
  \[ \sv M = \sum_{j=1}^m \sv{M_j} + \sum_{k=1}^n \sv{N_k}
           = \sum_{j=1}^m \sv{M_j}.
  \]         
  On the other hand, we know that the first $L^2$-Betti number of~$M$
  satisfies~$\ltb 1 (M) \leq \stisv M$~\cite[Corollary~5.6]{Sthesis}
  (the same proof in fact also gives the improved constant~$1$). Therefore,
  it suffices to show that $\ltb 1 (M) > \sum_{j=1}^m \sv {M_j}$.
  The connected sum formula for $L^2$-Betti
  numbers~\cite[Theorem~1.35]{lueckl2} yields
  \begin{align*}
        \ltb 1 (M)
      = m + n -1
      \;+\; & \sum_{j=1}^m \bigl( \ltb 1 (M_j) - \ltb 0 (M_j)\bigr)
      \\
      \;+\; & \sum_{k=1}^n \bigl( \ltb 1 (N_k) - \ltb 0 (N_k)\bigr).
  \end{align*}
  For connected manifolds~$X$, we have~$\ltb 0 (X) = 1 / |\pi_1(X)|$. 
  Because $\sv {M_j} > 0$ and $d >0$, the fundamental group~$\pi_1(M_j)$ is
  infinite~\cite[p.~39f]{Grom82}. Therefore,
  \begin{align*}
        \ltb 1 (M)
    & \geq m + n -1
      + \sum_{j=1}^m \bigl( \ltb 1 (M_j) - 0\bigr)        
      + \sum_{k=1}^n \Bigl( \ltb 1 (N_k) - \frac1{|\pi_1(N_k)|}\Bigr)
      \\
    & \geq m + n - 1
      - \sum_{k=1}^n \frac1{|\pi_1(N_k)|}
      \\
    & > \sum_{j=1}^m \sv{M_j}.
  \end{align*}
  Therefore, $\sv M < \ltb 1 (M) \leq \stisv M$, as claimed.
\end{proof}

\begin{repcor}{cor:intro:nonapprox}
  Let $N$ be an oriented closed connected hyperbolic $3$-manifold and
  let $k > \vol(N)/v_3$. Then the oriented closed connected
  $3$-manifold~$M \coloneqq N \connsum \connsum^k (S^1)^3$
  satisfies~$\sv M < \stisv M$.
\end{repcor}
\begin{proof}
  We only need to verify that the hypotheses of
  Theorem~\ref{thm-noapprox} are satisfied: We know $\sv N =
  \vol(N)/v_3$ (Remark~\ref{rem:simvolhypclassical}) and $\sv{(S^1)^3}
  = 0$~\cite[p.~8]{Grom82}. Moreover,~$\pi_1((S^1)^3) \cong \Z^3$ is
  infinite, so~$1/|\pi_1((S^1)^3)| = 0$.
\end{proof}

\begin{repcor}{cor:0noapprox}
  Let $M$ be an oriented closed connected $3$-manifold with $\sv M = 0$. Then
  the following are equivalent:
  \begin{enumerate}
  \item The simplicial volume of~$M$ satisfies integral approximation, i.e., $\stisv M = \sv M$.
  \item The manifold~$M$ is prime and has infinite fundamental group
    or $M$ is homeomorphic to~$\R P^3 \connsum \R P^3$.
  \end{enumerate}	
\end{repcor}
\begin{proof}
  \emph{Ad~$1 \Longrightarrow 2$.}
  For the contraposition, we consider the case that $M = N_1 \connsum
  \dots \connsum N_n$ is a non-trivial prime decomposition of~$M$,
  i.e., $n \geq 2$ and none of the~$N_k$ is homeomorphic to~$S^3$.
  Then $0 = \sv M = \sum_{k=1}^n \sv {N_k}$~\cite[p.~10]{Grom82}\cite{BBFIPP}
  and so $\sv {N_k} =0$ for all~$k \in \{1,\dots,n\}$.

  Because of~$N_k \not \cong S^3$, we have~$|\pi_1(N_k)| \geq 2$ by the
  Poincaré Conjecture~\cite[Corollary 1.7.4]{afw}.
  We now distinguish the following cases:
  \begin{itemize}
  \item If $n > 2$, then
    \[ n - 1 - \sum_{k=1}^n \frac1{|\pi_1(N_k)|}
    \geq n - 1 - \frac n2
    = \frac n2 - 1
    > 0.
    \]
    Therefore, from Theorem~\ref{thm-noapprox}, we obtain~$\sv M < \stisv M$.
  \item
    If $n = 2$ and $|\pi_1(N_1)| > 2$ or~$|\pi_1(N_2)| > 2$, then again 
    \[ n - 1 - \sum_{k=1}^n \frac1{|\pi_1(N_k)|}
    > 2 - 1 - 1
    = 0 
    \]
    and, by Theorem~\ref{thm-noapprox}, $\sv M < \stisv M$.
  \item
    If $n = 2$ and $|\pi_1(N_1)| = 2$ and $|\pi_1(N_2)| = 2$, then
    $\pi_1(N_1) \cong \Z/2 \cong \pi_1(N_2)$.  By the Elliptisation
    Theorem \cite[Theorem~1.7.3]{afw}, $N_1$~and~$N_2$ are both
    spherical and thus homeomorphic to the quotient of~$S^3$ by a
    subgroup of $\mathrm{SO}(4)$ of order~$2$, which must be~$\{\pm
    \mathrm{Id}\}$, and hence $N_1$ and $N_2$ are homeomorphic to~$\R
    P^3$.
  \end{itemize}

  \emph{Ad~$2 \Longrightarrow 1$.}
  If $M$ is prime with infinite fundamental group and~$\sv M = 0$,
  then $M$ must be a graph manifold (with infinite fundamental
  group)~\cite{Soma}.  Therefore, we obtain~\cite{FFL}
  \[ \sv M = \stisv M.
  \]
  Moreover, also~$\R P^3 \connsum\R P^3$ satisfies~$\sv {\R P^3
    \connsum \R P^3} = 0 = \stisv{\R P^3 \connsum \R P^3}$ (because this
  manifold admits a non-trivial
  self-covering)~\cite[Section~8]{LP}.
\end{proof}

\appendix

\section{Weak containment}\label{subsec:weakcont}

Parametrised simplicial volume of closed manifolds satisfies monotonicity
with respect to weak containment of parameter spaces~\cite[Theorem~3.3]{FLPS}.
This property admits a straightforward generalisation to the relative case
(including boundary control).

\begin{prop}[monotonicity with boundary control]\label{prop:weakcont}
  Let $W$ be an oriented compact connected manifold (with possibly non-empty 
  boundary) and infinite fundamental group. Let $\alpha$ and $\beta$
  be essentially free standard $\Pi(W)$-spaces with~$\alpha \prec \beta$.
  Then
  \[ \ifsv {W,\partial W}^\beta_\partial
     \leq
     \ifsv {W,\partial W}^\alpha_\partial.
  \]
\end{prop}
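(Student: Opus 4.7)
The plan is to adapt the closed-case approximation argument of~\cite[Theorem~3.3]{FLPS} to the relative setting with boundary control. The key idea is to transfer a nearly-optimal relative $\alpha$-parametrised fundamental cycle of small boundary norm to a $\beta$-parametrised one of comparable total norm \emph{and} comparable boundary norm, by invoking weak containment to approximate the finitely many measurable subsets of the parameter spaces appearing in its coefficients, together with their translates under the (finitely many) local coefficient morphisms showing up in the boundary operator.

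Concretely, I would fix $\varepsilon > 0$ and $\varepsilon_\partial > 0$, pick $\varepsilon' < \varepsilon_\partial$, and use the definition of $\ifsv{W,\partial W}^\alpha_\partial$ to choose a relative $\alpha$-parametrised fundamental cycle $c = \sum_{i=1}^t f_i \cdot \sigma_i \in C_n(W; \linfz \alpha)$ with $|c|_1 \leq \ifsv{W,\partial W}^\alpha_\partial + \varepsilon$ and $|\partial c|_1 \leq \varepsilon'$. By $L^1$-density, and at the cost of an arbitrarily small further perturbation, I may assume that each coefficient~$f_i$ is an integer-valued simple function supported on finitely many Borel subsets of~$\alpha(\sigma_i[0])$. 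Applying weak containment to the resulting finite family of Borel subsets \emph{together with} their translates under the (finitely many) local coefficient morphisms~$\sigma_i[0,1]$ appearing in Definition~\ref{def:chainslc}, one obtains, for any $\eta>0$, corresponding Borel subsets of the $\beta$-spaces and thereby integer-valued simple functions~$f_i' \in \linfz \beta$, so that the chain~$c' \coloneqq \sum_i f_i' \cdot \sigma_i \in C_n(W; \linfz \beta)$ satisfies $|c - c'|_1 \leq \eta$ and in particular $|c'|_1 \leq |c|_1 + \eta$ and $|\partial c'|_1 \leq |\partial c|_1 + (n+1) \cdot \eta$.

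It then remains to verify that $c'$ represents the relative $\beta$-parametrised fundamental class in~$H_n(W,\partial W;\linfz \beta)$. This follows from the same functoriality observation as in the closed case: the canonical map~$H_n(W,\partial W;\Z) \to H_n(W,\partial W;\linfz \beta)$ induced by the inclusion of constants sends the integer fundamental class to~$[W,\partial W]^\beta$, and the coefficient-transfer procedure commutes with this inclusion at the chain level up to an $\ell^1$-error of order~$\eta$, which can be absorbed into a boundary correction. Choosing $\eta$ with $(n+1)\eta < \varepsilon_\partial - \varepsilon'$ then gives a relative $\beta$-parametrised fundamental cycle $c'$ with $|c'|_1 \leq \ifsv{W,\partial W}^\alpha_\partial + \varepsilon + \eta$ and $|\partial c'|_1 \leq \varepsilon_\partial$. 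Sending $\eta, \varepsilon \to 0$ and unravelling the sup-inf structure in the definition of boundary-controlled simplicial volume yields the claimed inequality.

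The hard part will be carrying out the weak-containment approximation \emph{simultaneously} for the coefficients of~$c$ and for the terms contributing to $\partial c$, since the boundary formula twists the coefficients by the local coefficient system. Fortunately, the definition of weak containment — which by its very nature quantifies over finite families of both measurable subsets \emph{and} group(oid) morphisms — is tailored to this kind of joint approximation, so a single application to the enriched finite family produces both estimates at once.
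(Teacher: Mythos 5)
Your overall plan (approximate a nearly optimal controlled $\alpha$-cycle by a $\beta$-parametrised chain with comparable norm and boundary norm) is the right one, but the transfer step as you describe it has a genuine gap. Weak containment only matches \emph{measures} of finitely many Borel sets and their translates approximately; it does not give a Boolean-algebra homomorphism between the parameter spaces. Consequently, if you replace the coefficients $f_i$ of $c$ by simple functions $f_i'$ built from approximating sets $B_j$ in the $\beta$-space, the exact cancellations of the interior faces in $\partial c$ (which involve the twisted coefficients $\alpha(\sigma_i[0,1])(f_i)$) are destroyed: the chain $c'=\sum_i f_i'\cdot\sigma_i$ need not even be a \emph{relative cycle}, since its boundary will in general have small but non-zero coefficients on interior simplices. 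Moreover, your final step --- that $c'$ represents $[W,\partial W]^\beta$ because the error is ``of order $\eta$ and can be absorbed into a boundary correction'' --- is not justified: being $\ell^1$-small does not make a chain a boundary of a small chain (that would be a uniform boundary condition, which is not available here for arbitrary $W$ and arbitrary $\beta$), and for non-ergodic $\beta$ the class of a relative cycle is not detected by $\ell^1$-proximity at all. Also note that ``$|c-c'|_1\leq\eta$'' does not literally make sense, as $c$ and $c'$ live in chain complexes with different coefficient systems.

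The paper's proof avoids both problems by arranging matters so that the fundamental-cycle property holds \emph{exactly} and only norms need to be estimated. One writes $c = z + \partial b + w$ with $z$ an integral relative fundamental cycle, $b\in C_{n+1}(W;\linfz\alpha)$ and $w$ supported on $\partial W$, and then uses the characterisation of $\alpha\prec\beta$ via the weak topology on the space of actions (Remark~\ref{rem:weakcontweak}): there is an action $\xi_0\cong_\Gamma\beta_0$ on the \emph{same} probability space $(X,\mu)$ whose translates of the finitely many level sets of the coefficients of $b$ and $w$ differ from the $\alpha_0$-translates only on sets of measure $<\delta$ (measured via $\symmdiff$). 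Keeping the very same coefficient functions and merely changing the action, the chain $c' \coloneqq z+\partial b + w$ in $C_n(W;\linfz\xi)$ is a relative $\xi$-parametrised fundamental cycle by construction, and the differences $\bigl||c|_1^\alpha-|c'|_1^\xi\bigr|$ and $\bigl||\partial c|_1^\alpha-|\partial c'|_1^\xi\bigr|$ are controlled by the symmetric-difference estimates, giving the boundary control. If you want to salvage your version, you would have to either reproduce this decomposition-plus-same-space trick or prove a filling statement for small cycles, which is precisely the difficulty your argument sweeps under the rug.
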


For the sake of completeness, we carry out the transformation from the closed
case to the relative case in detail.

Let us first recall basics on weak containment. Roughly speaking, $\alpha \prec \beta$
means that every finite relation between Borel sets and groupoid morphisms in~$\alpha$
can be simulated in~$\beta$ with arbitrary precision.

\begin{defn}[weak containment]\label{def.weakcont}
 \hfil
 \begin{itemize}
 \item Let $\Gamma$ be a countable group and let $\alpha \colon \Gamma
   \actson (X,\mu)$ and $\beta \colon \Gamma \actson (Y,\nu)$ be
   standard $\Gamma$-spaces. Then $\alpha$ is \emph{weakly contained}
   in~$\beta$ (in symbols: $\alpha \prec \beta$) if the following
   holds: For all~$\varepsilon \in \R_{>0}$, all~$m \in \N$, all Borel
   sets~$A_1, \dots, A_m \subset X$, and all finite subsets~$F \subset
   \Gamma$, there exist Borel sets~$B_1, \dots, B_m \subset Y$ such
   that
   \[ \fa{j \in \{1,\dots,m\}} \fa{g \in F}
      \bigl| \mu(g^\alpha(A_j) \cap A_j) - \nu(g^\beta(B_j) \cap B_j)\bigr|
      < \varepsilon.
   \]
 \item Let $G$ be a connected groupoid with countable automorphism
   groups and let $\alpha$ and $\beta$ be standard $G$-spaces. Then
   $\alpha$ is \emph{weakly contained} in~$\beta$ (in symbols: $\alpha
   \prec \beta$) if the following holds: For one (whence every)
   object~$x_0$ and $\Gamma \coloneqq \Aut_G x_0$, the standard
   $\Gamma$-space~$\alpha(x_0)$ is weakly contained in the
   $\Gamma$-space~$\beta(x_0)$.
 \end{itemize}	
\end{defn}

\begin{rem}[an alternative characterization of weak containment]\label{rem:weakcontweak}
  Let $G$ be a connected groupoid with countable automorphism groups
  and let $\alpha$ and $\beta$ be standard $G$-spaces. Moreover, let
  $x_0$ be an object of~$G$ and $\Gamma \coloneqq \Aut_G x_0$ as well
  as~$(X,\mu) \coloneqq \alpha(x_0)$.  Then $\alpha \prec \beta$ if
  and only if $\alpha(x_0)$ lies in the closure of
  \[ \bigl\{ \xi \in A(\Gamma,X,\mu)
     \bigm| \xi \cong_\Gamma \beta(x_0)
     \bigr\}
  \]
  with respect to the weak topology on the space~$A(\Gamma, X,\mu)$ of
  all $\mu$-preserving Borel actions of~$\Gamma$
  on~$(X,\mu)$~\cite[Proposition~10.1]{kechrisglobal}.
\end{rem}

\begin{proof}[Proof of Proposition~\ref{prop:weakcont}]
  The proof is a straightforward adaption of the proof in the closed
  case~\cite[Theorem~3.3]{FLPS}; we only need to convert the proof
  from twisted to local coefficients and add boundary control.

  Let $x_0 \in W$ and $\Gamma \coloneqq \pi_1(W,x_0)$.  Without loss
  of generality, we may assume that $\alpha$ is induced from a
  standard $\Gamma$-space~$\alpha_0 \colon \Gamma \actson (X,\mu)$
  at~$x_0$ and that $\beta$ is induced from a standard
  $\Gamma$-space~$\beta_0$ at~$x_0$
  (Remark~\ref{rem:groupoidvsgroup}).

  Let $n \coloneqq \dim W$, let $\varepsilon_\partial, \varepsilon \in
  \R_{>0}$ and let $c\in C_n(W;\linfz \alpha)$ be an
  $\alpha$-parametrised relative fundamental cycle with
  \[ \mathopen|\partial c|_1^\alpha \leq \frac12 \cdot \varepsilon_\partial.
  \]
  It then clearly suffices to show the following claim:
  \begin{itemize}
  \item[(C)] There exists a standard $\Pi(W)$-space~$\xi$ with~$\xi \cong \beta$
    and a $\xi$-pa\-ram\-e\-trised relative fundamental cycle~$c' \in C_n(W;\linfz \xi)$ 
    with
    \[ \mathopen| c' |_1^\xi \leq \mathopen|c|_1^\alpha + \varepsilon
       \qand
       \mathopen| \partial c' |^{\res \xi}_1 \leq \varepsilon_\partial.
    \]
  \end{itemize}
  
  To establish claim~(C), let $z \in C_n(W;\Z)$ be a relative
  fundamental cycle.  Because $c$ is a relative fundamental cycle,
  there exist chains~$b \in C_{n+1}(W;\linfz \alpha)$ and $w \in
  C_n(\partial W; \linfz{\res \alpha})$ with
  \[ c = z + \partial b + w \quad \text{in~$C_n(W;\linfz \alpha)$};
  \]
  more explicitly, we write
  \[ b =  \sum_{\tau \in T} f_\tau \cdot \tau
     \qand
     w = \sum_{\varrho \in R} g_{\varrho} \cdot \varrho 
  \]
  with finite sets~$T \subset \map(\Delta^{n+1}, W)$, $R \subset
  \map(\Delta^n,\partial W)$ and bounded measurable
  functions~$(f_\tau)_{\tau \in T}$ and~$(g_{\varrho})_{\varrho \in
    R}$ on~$(X,\mu)$.

  We choose a finite Borel partition~$X = A_1 \sqcup \dots \sqcup A_m$ of~$X$
  that refines the finite set
  \[ \bigl\{ f_\tau^{-1}(k)
     \bigm| \tau \in T,\ k \in \Z
     \bigr\}
     \cup
     \bigl\{ g_\varrho^{-1}(k)
     \bigm| \varrho \in R,\ k \in \Z
     \bigr\}.
  \]
  Moreover, we set
  \begin{align*}
    \delta & \coloneqq \frac1m \cdot
    \min \Bigl(\frac{\varepsilon}{\sum_{\tau \in T} \|f_\tau\|_\infty},\
               \frac{\varepsilon_\partial}{2 \cdot \sum_{\varrho \in R} \|g_\varrho\|_\infty}
    \Bigr),
    \\
    F & \coloneqq \bigl\{ \tau[0,1] \bigm| \tau \in T \bigr\}
    \cup \bigl\{ \varrho[0,1]\bigm| \varrho \in R \bigr\},
    \\
    F_0 & \coloneqq \{ h_f^{-1} \mid f \in F \} \subset \Gamma
  \end{align*}
  (the construction of the~$h_f$ is explained in Remark~\ref{rem:groupoidvsgroup}).
  Because $\alpha$ is weakly contained in~$\beta$, there exists a standard
  $\Gamma$-space~$\xi_0 \in A(\Gamma,X,\mu)$ such that
  \[ \fa{j \in \{1,\dots, m\}} \fa{h \in F_0^{-1}}
     \mu \bigl( h^{\alpha_0}(A_j) \symmdiff h^{\xi_0}(A_j) \bigr) < \delta
  \]
  and~$\xi_0 \cong_\Gamma \beta_0$ (Remark~\ref{rem:weakcontweak}).
  Let $\xi$ be the standard $\Pi(W)$-space associated with~$\xi_0$
  (Remark~\ref{rem:groupoidvsgroup}) and let
  \[ c' \coloneqq z + \partial b + w \quad \text{in~$C_*(W;\linfz \xi)$}
  . 
  \]
  By construction, $c'$ is a relative $\xi$-parametrised fundamental
  cycle of~$(W,\partial W)$ and $\xi \cong \beta$.

  We now show that $c'$ satisfies the norm estimates postulated in~(C).
  We have
  \begin{align*}
    \bigl| |c|_1^\alpha - |c'|_1^\xi
    \bigr|
    & = \biggl| \biggl| z + \sum_{j=1}^{n+1} \sum_{\tau \in T} (-1)^j \cdot f_\tau \cdot \partial_j \tau
                          + \sum_{\tau \in T} \alpha(\tau[0,1])(f_\tau) \cdot \partial_0 \tau
                          + w \biggr|_1^\alpha
    \\
    &\quad   -  \biggl| z + \sum_{j=1}^{n+1} \sum_{\tau \in T} (-1)^j \cdot f_\tau \cdot \partial_j \tau
                          + \sum_{\tau \in T} \xi(\tau[0,1])(f_\tau) \cdot \partial_0 \tau
                          + w \biggr|_1^\xi
        \biggr|
    \\
    & \leq \biggl|  z + \sum_{j=1}^{n+1} \sum_{\tau \in T} (-1)^j \cdot f_\tau \cdot \partial_j \tau
                          + \sum_{\tau \in T} \alpha(\tau[0,1])(f_\tau) \cdot \partial_0 \tau
                          + w 
    \\
    &\quad   -  \biggl( z + \sum_{j=1}^{n+1} \sum_{\tau \in T} (-1)^j \cdot f_\tau \cdot \partial_j \tau
                          + \sum_{\tau \in T} \xi(\tau[0,1])(f_\tau) \cdot \partial_0 \tau
                          + w \biggr)
                \biggr|_1^{(X,\mu)}
    \\
    & \leq \sum_{\tau \in T} \bigl\| \alpha(\tau[0,1])(f_\tau) - \xi(\tau[0,1])(f_\tau)
                          \bigr\|_1.
  \end{align*}
  For each~$\tau \in T$, we write~$f_\tau = \sum_{j=1}^m a_{\tau,j} \cdot \chi_{A_j}$
  with~$a_1, \dots, a_m \in \Z$. Then
  \begin{align*}
    \bigl\| \alpha(\tau[0,1])(f_\tau) - \xi(\tau[0,1])(f_\tau)
    \bigr\|_1
    & \leq \sum_{j=1}^m |a_j| \cdot \mu\bigl( h^{-1}_{\tau[0,1]}{}^{\alpha_0} (A_j)
                                 \symmdiff  h^{-1}_{\tau[0,1]}{}^{\xi_0} (A_j)\bigr)
    \\
    & \leq m \cdot \|f_\tau\|_\infty \cdot \delta.                             
  \end{align*}
  Hence,
  \[ \bigl| |c|_1^\alpha - |c'|_1^\xi \bigr|
  \leq \sum_{\tau \in T} m \cdot \|f_\tau\|_\infty \cdot \delta
  \leq \varepsilon.
  \]

  Similarly, we can handle $\partial c$~and~$\partial c'$. We have
  \begin{align*}
    \partial c
    & = \partial z + \partial \partial b + \partial w
    \quad \text{in~$C_{n-1}(\partial W;\linfz {\res \alpha})$}
    \\
    \partial c'
    & = \partial z + \partial \partial b + \partial w
    \quad \text{in~$C_{n-1}(\partial W;\linfz {\res \xi})$}.
  \end{align*}
  Moreover, $\partial \partial b =0$ in both cases and $\partial z$
  does not depend on the parameter space (as $z$ has constant coefficients).
  The same type of calculations as above shows that (where
  we simplify notation by writing~$\xi$ instead of~$\res \xi$) 
  \begin{align*}
    \bigl| |\partial c|_1^\alpha - |\partial c'|_1^\xi \bigr|
    & \leq \sum_{\varrho \in R} \bigl\| \alpha(\varrho[0,1])(g_\varrho)
    - \xi(\varrho[0,1])(g_\varrho) \bigr\|_1
    \\
    & \leq \sum_{\varrho \in R} m \cdot \|g_\varrho\|_\infty \cdot \delta
    \\
    & \leq \frac12 \cdot \varepsilon_\partial.
  \end{align*}
  Hence,
  \[ |\partial c'|_1^\xi
  \leq |\partial c|_1^\alpha + \frac12 \cdot \varepsilon_\partial
  \leq \varepsilon_\partial.
  \]
  This finishes the proof of claim~(C). 
\end{proof}

\newpage
 
\bibliographystyle{amsalphaabbrv}
\bibliography{biblionote}

\end{document}